\newtheorem{remark}[theorem]{Remark}
\newtheorem{assumption}[theorem]{Assumption}
\newcommand{\Oh}{\mathcal{T}_h}
\newcommand{\Eh}{\mathcal{E}_h}
\newcommand{\EH}{\mathcal{E}_H}
\newcommand{\EhH}{\mathcal{E}_{h,H}}
\newcommand{\Vh}{\boldsymbol{V}_h}
\newcommand{\uhat}{\widehat{u}_{h,H}}
\newcommand{\qhat}{\widehat{\boldsymbol{q}}_{h,H}}
\newcommand{\n}{\boldsymbol{n}}
\newcommand{\Piv}{\boldsymbol{\Pi}_V}
\newcommand{\Piw}{\Pi_W}
\newcommand{\ehatq}{{\boldsymbol{e}}_{\widehat{q}}}
\newcommand{\ehatu}{e_{\widehat{u}}}
\newcommand{\TT} {  \mathcal{T}_h(T)} 
\newcommand{\bint}[2]{( #1\,,\,#2 )_{\Oh}}
\newcommand{\bintOhi}[2]{( #1\,,\,#2 )_{\TT}}
\newcommand{\bintK}[2]{( #1\,,\,#2 )_{K}}
\newcommand{\bintEh}[2]{\langle #1\,,\,#2 \rangle_{\partial{\Oh}}}
\newcommand{\bintEH}[2]{\langle #1\,,\,#2 \rangle_{\partial{\mathcal{T}_H}}}
\newcommand{\bintEK}[2]{\langle #1\,,\,#2 \rangle_{\partial K}}
\newcommand{\bintET}[2]{\langle #1\,,\,#2 \rangle_{\partial \TT}}
\def\al{{\alpha}}
\def\alphaa{{\tau}}
\def\Om{{\Omega}}
\definecolor{black}{rgb}{0,0,0}
\definecolor{red}{rgb}{1,0,0}
\definecolor{blue}{rgb}{0,0,1}
\title{
A multiscale HDG method for second order elliptic equations. \\[1ex]
Part I. Polynomial and homogenization-based multiscale spaces\\[2ex]
}
\author{   
 Yalchin Efendiev
         \thanks{Department of Mathematics, Texas A$\&$M University, 
                 College Station, TX 77843, email: {\tt efendiev@math.tamu.edu}}
\and
 Raytcho Lazarov
         \thanks{Department of Mathematics, Texas A$\&$M University, 
                 College Station, TX 77843, email: {\tt lazarov@math.tamu.edu}}
 \and
         Ke Shi
         \thanks{Department of Mathematics, Texas A$\&$M University, 
                 College Station, TX 77843, email: {\tt shike@math.tamu.edu}}.
}
\date{started October 1, 2012, today is \today}
\begin{document}

 \maketitle

\begin{abstract}
We introduce a finite element method for numerical upscaling of second order
elliptic equations with highly heterogeneous coefficients. The method is based on
a mixed formulation of the problem and 
the concepts of the domain decomposition and 
the hybrid discontinuous Galerkin methods. 
The method utilizes three different scales: (1) the scale of the 
partition of the domain of the problem, (2) the sale of partition of
the boundaries of the subdomains (related to the corresponding space of Lagrange 
multipliers), and (3) the fine grid scale that is assumed to resolve the 
scale of the heterogeneous variation of the coefficients.
Our proposed method gives a flexible framework that (1) couples
independently generated multiscale basis functions in each coarse patch
(2) provides a stable global coupling independent of local
discretization, physical scales and contrast
(3) allows avoiding any constraints (c.f., \cite{Arbogast_PWY_07}) 
on coarse spaces.
In this paper, we develop and study a multiscale HDG method that uses
polynomial and homogenization-based multiscale spaces. 
These coarse spaces are designed for problems with scale separation.
In our consequent paper,
we plan to extend our flexible HDG framework to more challenging
multiscale problems with non-separable scales and high contrast
and consider enriched coarse spaces that use
appropriate local spectral
problems.

\end{abstract}


\section{Introduction}

In this paper we consider the following second order elliptic differential equation
for the unknown function $u(x)$
\begin{equation}\label{eq:general}
- \nabla \cdot( \kappa(x) \nabla u) = f(x), \quad x \in \Omega
\end{equation}
with homogeneous Dirichlet boundary conditions. Here $\kappa(x) \ge \kappa_0 >0$ 
and $c(x) \ge 0$ are highly heterogeneous coefficients and $\Omega$ is a bounded polyhedral
domain in $\mathbb{R}^n$, $n=2,3$. 
The presented in this paper methods
are targeting applications of equation \eqref{eq:general} to flows in porous media. Other possible applications
are diffusion and transport of passive chemicals or heat transfer in heterogeneous media.

Flows in porous media appear in many industrial, scientific, engineering, and 
environmental applications. One common characteristic of these diverse areas is 
that porous media are intrinsically multiscale and 
typically display heterogeneities over a wide range of length-scales.
Depending on the goals, solving the governing equations of flows in porous media might be sought at: 
(a) A coarse scale (e.g., if only the global pressure drop for a given flow rate is needed, and no other fine 
scale details of the solution are important), 
(b) A coarse scale enriched with some desirable fine scale details, and 
(c) The fine scale (if computationally affordable and practically desirable).

In naturally occurring materials, e.g.\ soil or rock, the permeability is small in granite formations 
(say $10^{-15}$ cm$^2$), medium in oil reservoirs, (say $10^{-7}$ cm$^2$ to $10^{-9}$ cm$^2$),  and large in 
highly fractured or in vuggy media (say $10^{-3}$ cm$^2$). 
Numerical solution of such problems is a challenging task that has attracted a 
substantial attention in the scientific and engineering community.

In the last decade a number of 
numerical upscaling schemes that fall into the class of model reduction methods 
have been developed and used in various applications in geophysics and engineering 
related to problems in highly heterogeneous media. These  include 
Galerkin multiscale finite element 
(e.g., \cite{Arb04,Chu_Hou_MathComp_10, EE03, EfendievHou_MSFEM_book,
EHW00}),
mixed multiscale finite element  
(e.g., \cite{aarnes04, ae07, Arbogast_Boyd_06,Arbogast_HomBasedMS_11}), 
the multiscale finite volume  (see, e.g., 
\cite{Durlofsky_2006,jennylt03}   
mortar multiscale  (see e.g., \cite{Arbogast_PWY_07, Arbogast_Xiao_2013},  
and variational multiscale 
(see e.g., \cite{hughes98})  methods.
%
In the paper we present a general framework to design of numerical 
upscaling method based based on subgrid approximation using the hybrid discontinuous Galerkin finite element  method
(HDG) for second order elliptic equations. 
One of the earliest subgrid (variational multiscale) methods for Darcy's problem
in a mixed form  have been developed by Arbogast  in  \cite{Arb04}, see also \cite{NPP08}.

In order to fix the main ideas and to derive the numerical upscaling method we shall consider 
model equation \eqref{eq:general} with homogeneous Dirichlet boundary condition in a mixed form:
\begin{subequations}
\begin{alignat}{2} 
\label{original equation-1}
\al \boldsymbol{q} + \nabla u &= 0 \qquad && \text{in $\Omega$,}\\
\label{original equation-2}
\nabla \cdot \boldsymbol{q}  &= f && \text{in $\Omega$}\\
\label{boundary condition}
u &= 0 && \text{on $\partial \Omega$.}
\end{alignat}
\end{subequations}
Here $\al(x)=\kappa(x)^{-1}$ $\Omega \subset \mathbb{R}^n$ ($n=2,3$) is a bounded polyhedral domain, 
$f \in L^2(\Omega)$.

In the paper  we present a multiscale finite element approximation of the mixed system 
\eqref{original equation-1} -- \eqref{boundary condition} based on the hybridized discontinuous
Galerkin method. Multiscale methods have gained substantial popularity in the last decade. We can consider them as 
a procedure of numerical upscaling that extends the capabilities of the mathematical theory of homogenization
to more general cases including materials with non-periodic properties, non-separable scales, and/or random
coefficients.

The first efficient mixed multiscale finite element methods were devised by Arbogast in \cite{Arbogast_CWY_00}
as multiblock grid approximations using the framework of mortaring technique.  Mortaring techniques
(e.g. see the pioneering work \cite{BernardiMadayPatera}) were introduced to accommodate methods that can be
defined in separate subdomains that could have been independently meshed. This
technique introduces an auxiliary space for a Lagrange multiplier associated with a 
continuity constraint on the approximate solution.  The classical mortaring,
 devised for the
needs of domain  domain decomposition methods,  has been adapted  recently  as  multiscale 
finite element approximations, e.g. \cite{ Arbogast_HomBasedMS_11, Arbogast_PWY_07, Arbogast_Xiao_2013}. 
In a two-scale (two-grid, fine and coarse) method  the aim is 
to resolve the local heterogeneities on the fine grid introduced on each coarse block and 
then "glue" these approximations together via mortar spaces, that  play the role of Lagrange
multipliers, defined of the boundaries
of the coarse partition. In order to design a stable method the mortar spaces have to
satisfy proper inf-sup condition. This approach shown to be well suited for 
problems with heterogeneous media and a number of efficient  methods and 
implementations have been proposed, studied, and used for solving a variety of 
applied problems, see, e.g. \cite{Arb04, Arbogast_HomBasedMS_11, ArB_vuggy,Arbogast_PWY_07}.

The multiscale finite element method in this paper is based on discretization of the domain $\Omega$
by using three different scales.
First the domain is split into a number of non-overlapping subdomains with characteristic size 
$L$.  This partition, denoted by $\mathcal{P}$, represents a coarse scale at which the global features of the  
 solution are captured, but the local features are not resolved.
Each subdomain is partitioned into finite elements with size $h$. This partition, 
denoted by $\mathcal{T}_h$, represents the scale 
at which the heterogeneities of the media are well represented  
and the local features of the solution can be resolved.    Finally, on 
each interface of two adjacent subdomains from the partition $\mathcal{P} $  we introduce an additional 
partition $\mathcal{E}_H$ with characteristic size $H$.
 This partition is used to introduce the space 
of the Lagrange multipliers, which will provide the means of gluing together 
of the fine-grid approximations
that are introduced on each subdomain using the fine-grid partition  $\mathcal{T}_h$.
Three scales partition of the domains have been used in the mortar multiscale finite element methods 
by Arbogast and Xiao in \cite{Arbogast_Xiao_2013}, where the scale $L$ represents the size a
cell at which a homogenized solution exits.

 The hybridization of the finite element methods as outlined in
\cite{CockburnGL2009} provides ample possibilities for "gluing" together various finite
element approximations. The mechanism of  this "glue" is based on
the notion of {\it numerical trace} and {\it numerical flux}. 
Numerical trace is 
single valued function on the finite element interfaces and belongs to certain  Lagrange
multiplier space. This is also the space at which the global problem is formulated. The stability is ensured
by a proper choice of the {\it numerical flux}, that  involves a parameter $\tau$, which proves 
stabilization of the scheme and some other desired properties (e.g. superconvergence)
(for details, see, e.g. \cite{CockburnDongGuzman2008,CockburnGL2009,CockburnQiuShi2012}).
For multiscale methods, local basis functions are constructed
independently in each coarse region.
 For this reason, approaches are needed that can flexibly 
couple these local multiscale local solutions without any constraints.
In previous works, mortar multiscale methods are proposed to couple
local basis functions; however, they require 
additional constraints on mortar spaces.
The proposed approaches can avoid any constraints on coarse spaces and provide
a flexible "gluing"  procedure for coupling multiscale basis functions.
In this paper, we focus on polynomial and homogenization
based multiscale spaces and study their stability and convergence
properties.

The paper is organized as follows. In
Sect. 2 we introduce the necessary notations and describe the multiscale FEM
based on the framework of hybridizable discontinuous Galerkin method. 
In Subsection \ref{upscaledFEM} we recast the two scale method into a hybridized form
which essentially reduces to a symmetric and positive definite system for the 
Lagrange multipliers associated with the trace of the solution of the interfaces.
In Section \ref{solvability} we show that under reasonable assumptions on the
finite dimensional spaces and the mesh, the two-scale method has unique solution.

In Section  \ref{errors} we present the error analysis for the multiscale method.
In Subsection \ref{prelims} we introduce a number of projection operators 
related to the finite dimensional spaces (that are used later) and also a special projection operator 
related to the hybridizable  discontinuous Galerkin FEM.  Further, we state the approximation properties 
of these projections in terms of the  scales of the various partitions of the domain. 
In Subsections \ref{estimate for q} and \ref{estimate for u}  we derive 
error estimates for the flux $\boldsymbol{q}$ and the pressure $u$.
Finally in  Section \ref{sec:MS}, 
we study a new class of non-polynomial space for the numerical trace for special case of heterogeneous media with
periodic  arrangement of the coefficients.
This space was proposed by Arbogast and Xiao \cite{Arbogast_Xiao_2013}
as a space for the mortar method for constructing multiscale finite element approximations
and uses  information of the local  solution on the periodic cell. 
We show that the proposed  multi-scale method is well posed and 
has proper approximation properties.

\section{Multiscale Finite Element Method}
Now we present the multiscale finite element approximation of the 
system \eqref{original equation-1}  -- \eqref{boundary condition}. For this we shall
need partition of the domain into finite elements, the corresponding finite elements spaces,
and some notation from Sobolev spaces.

\subsection{Sobolev spaces and their norms}

Throughout the paper we shall use the standard notations for Sobolev spaces and their norm on 
the domain $\Omega$, subdomains $D \subset \Omega$ or their boundaries. For example,
$\|v \|_{s,D}$, $|v |_{s,D}$, $\|v \|_{s,\partial D}$, $|v |_{s, \partial D}$, $s>0$, denote the Sobolev 
norms and semi-norms on $D$ and its boundary $\partial D$. For $s$ integer the  Sobolev spaces are 
Hilbert spaces and the norms are defined by the $L^2$-norms of their weak derivatives up to order $s$.
For $s$ non integer the spaces are defined by interpolation \cite{Grisvard85}.
For $s=0$ instead of $\|v \|_{0,D}$ we shall use $\|v \|_{D}$.  

Further, we shall use various
inequalities between norms and semi-norms  related to  embedding of Sobolev spaces. If $D \subset \Omega$
and $diam(D)=d$ then we have the following inequalities:
\begin{equation}\label{eq:trace}
\| v \|^2_{\partial D} \le C \left ( d \| \nabla  v \|^2_{D} + d^{-1} \| v \|^2_{D} \right  ) .
\end{equation}
We remark that since in this paper we are using three different scales ($L,H,h$) of partition of the domain
we shall use these inequalities for domains of sizes $L,H$  or $h$.

\subsection{Partition of the domain}

The finite element spaces that are used in the proposed method are defined below. 
They involve three different meshes.
Let $\mathcal{P}$ be a disjoint polygonal partition of the domain $\Omega$ which
allows nonconforming decomposition, see e.g. Figure \ref{fig:partition}, and
let the maximal diameter of all $T \in  \mathcal{P}$ be $L$. 
Let $\TT$ be quasi-uniform conforming triangulations of $T$ with maximum element 
diameter $h_T$, denote by $\mathcal{T}_h = \cup_{T \in \mathcal{P}} \TT$, and let $  h = \max_{T \in \mathcal{P}} h_T$. 
Let $\mathcal{E}_T$ denote the set of all edges/faces of the triangulation $\TT$ 
and $\mathcal{E}_h:= \cup_{T\in \mathcal{P}} \mathcal{E}_T$.
We also set $\partial \mathcal{T}_h = \cup_{K \in \mathcal{T}_h} \partial K$.  Consistently in this paper we shall  
denote by $T$ the subdomains of the partition $  \mathcal{P}$ while by $K$ we shall denoted the finite elements of
the fine partition $ \mathcal{T}_h$.

We call $F$ a interface of the partition $\mathcal{P}$ if $F$ is 
either shared by two neighboring subdomains, $F = \bar{T_1} \cap \bar{T_2}$ or $F = \bar{T} \cap \partial{\Omega}$.
For each interface $F$, let $\mathcal{T}_{F}$ be a quasi-uniform partition of $F$ with maximum 
element diameter $H$. Set $\mathcal{E}_H = \cup_{F \in \mathcal{E}} \mathcal{T}_{F}$ 
and  $\mathcal{E}^0_h:=\{ F \in \mathcal{E}_h: ~~ F \cap \partial T = \emptyset  \,\, \mbox{for any} \, \, T \in {\mathcal P}\}.$

\begin{figure}[ht] 
\centerline{
 \includegraphics[ width=.45\textwidth, clip]{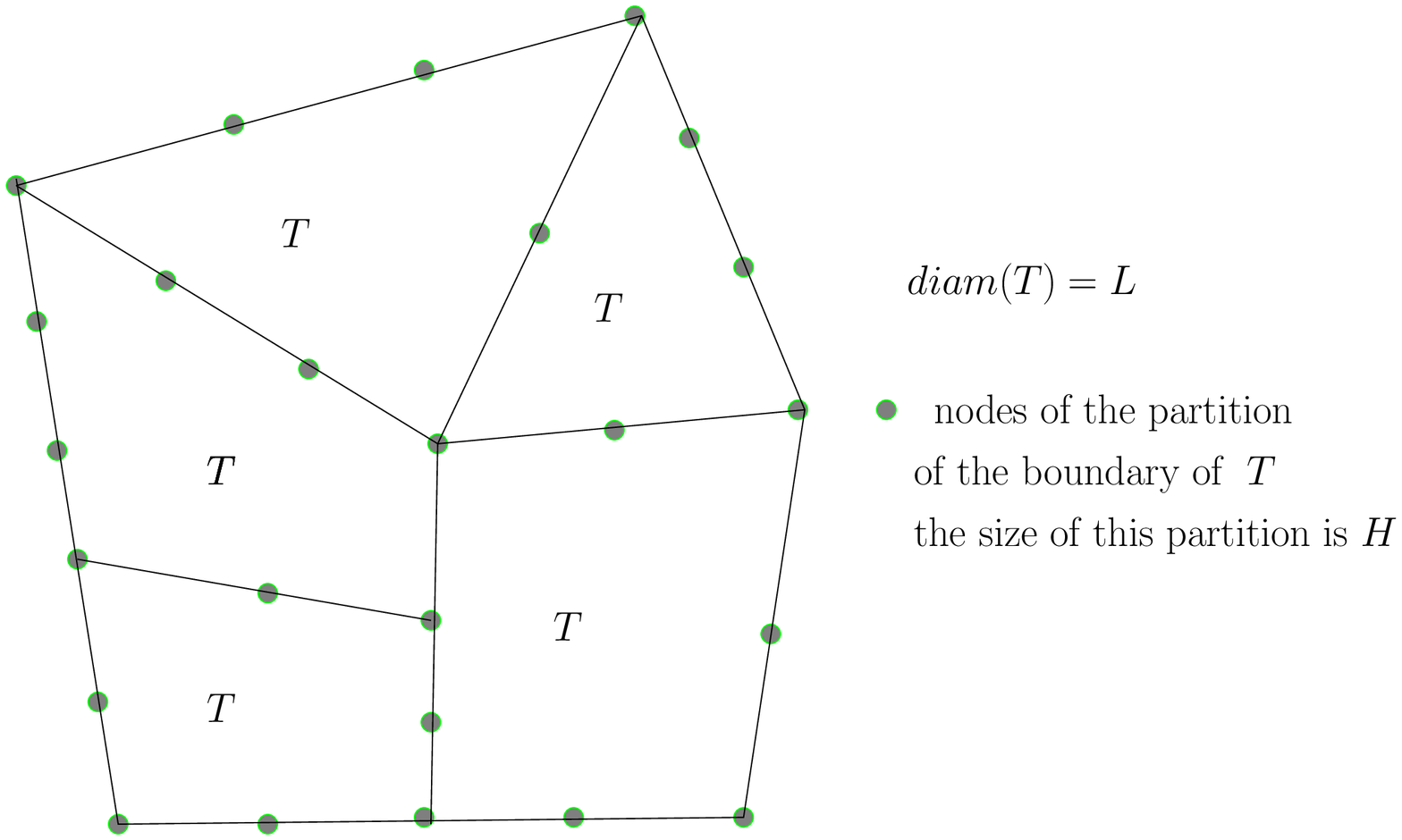}\hfill
 \includegraphics[ width=.4\textwidth, clip]{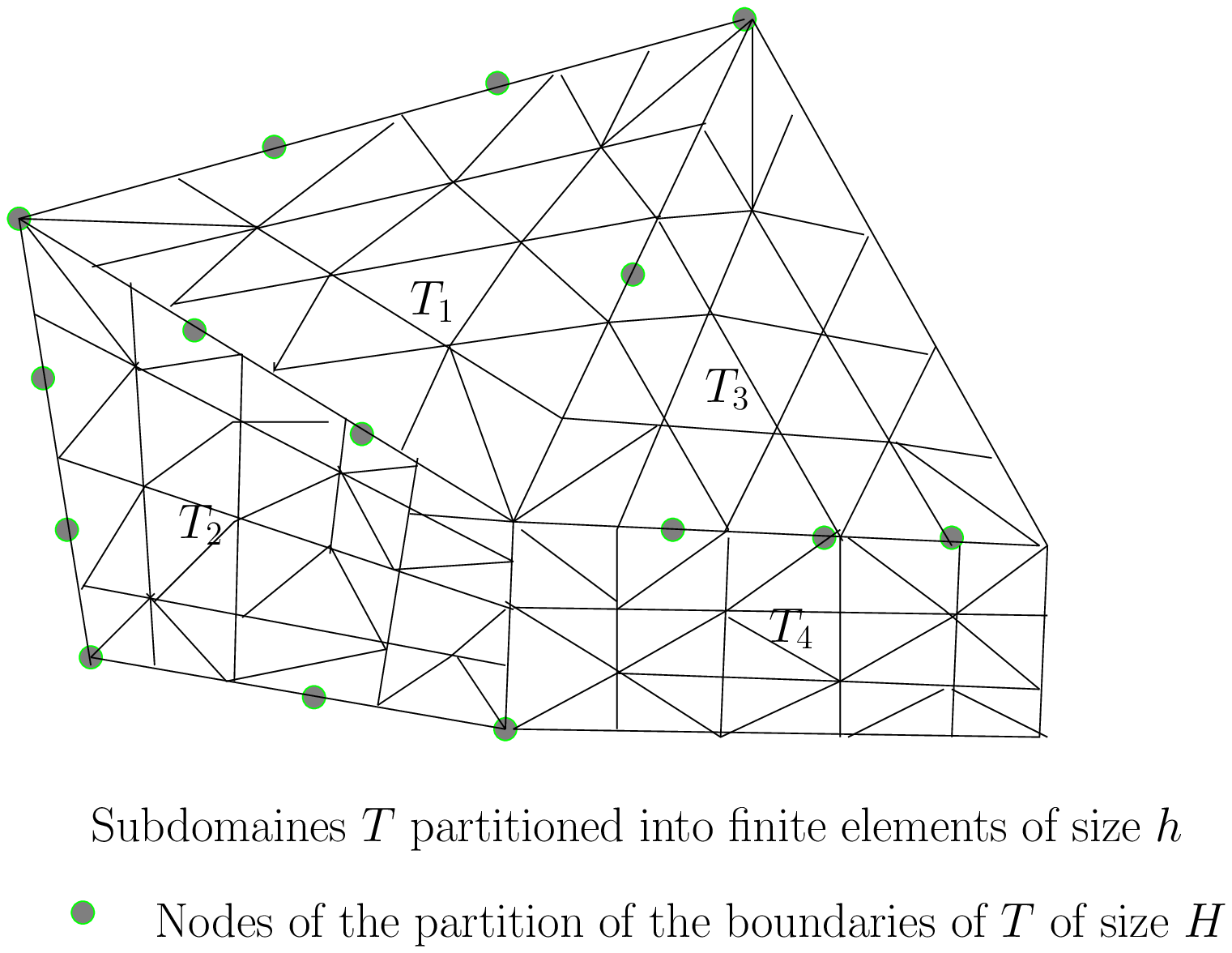}\hfill
}
\centering
\vskip5mm
\caption{Partition of $\Omega$: (left) on the boundaries of the subdomains $T$ of size $L$ an additional mesh of size $H$ is shown;
(right) on each   subdomain $T$  a fine mesh is introduced}
\label{fig:partition}
\end{figure}

Thus, we have three scales: (1) $L$ -- the maximum size of the of the subdomains $T \in \mathcal{P} $, (2) $H$ -- the size of the 
partition of the boundaries of $T \in \mathcal{P} $, and finally (3) the scale of the fine-grid mesh -- the maximum 
diameter $h$ of the finite elements introduced in each subdomain $T \in \mathcal{P} $. 
In this paper we shall assume that the $diam(\Om)=1$ and  $ 0 < h << H \le L \le 1$.

Below is a summary of the above notation by grouping them  into categories according to the scale they represent:
$$
\begin{array}{lrll}
(a)~~ &  & \hspace{-0.7in} \text{partition of the domain $\Omega$ into subdomains $T$ (scale $L$):}  & \\
  & \mathcal{P} & :=  \text{the set of all subdomains $T$ }& \\
  &  \partial \mathcal{P} & := \cup_{T \in \mathcal{P}} \partial T & \\
(b)~ &  
  & \hspace{-0.7in} \text{partition of the boundaries of subdomains $T$ (scale $H$):}  & \\
  & \mathcal{E}_H(T) & := \text{ the set of all coarse edges/faces of a subdomain }  T \in \mathcal{P} & \\
  & \mathcal{E}_H & := \text{ the partition all edges/faces of the boundaries } \partial T,  \,  T \in  \mathcal{P} & \\
(c) &  & \hspace{-0.7in} \text{partition of each subdomain $T \in \mathcal{P} $ into finite elements (scale $h$):}   & \\
 &   \mathcal{T}_h(T) & :=  \text{fine grid triangulations of a subdomain} ~~T\in \mathcal{P}   &\\
 & \mathcal{E}_h (T) & :=\text{ the set of all edges/faces of the triangulation $\TT$ } & \\
 & \mathcal{E}^0_h (T) & :=\text{ the set of all interior edges/faces of the triangulation $\TT$ } (\equiv \mathcal{E}_h (T) \cap T)& \\
 & \partial \mathcal{T}_T & := \cup_{K \in \mathcal{T}_T} \partial K & \\
%
(d) &  & \hspace{-0.7in} \text{globally defined meshes on $\Omega$:}  & \\
 & \mathcal{T}_h & :=  \cup_{T \in \mathcal{P}} \TT  &  \\
 & \partial \mathcal{T}_h & := \cup_{K \in \mathcal{T}_h} \partial K & \\
 & \mathcal{E}_h & := \cup_{T\in \mathcal{P}} \mathcal{E}_h(T) & \\
 & \mathcal{E}^0_h & :=\text{the set of all }  F \in \mathcal{E}_h: ~ F \in \mathcal{E}_h, 
                           ~ F \text{  does not intersect $\partial T $ for any} ~ T\in \TT  & \\
  & \mathcal{E}_{h,H} & := \mathcal{E}^0_h \cup \mathcal{E}_H.  & \\
\end{array}
$$
Note that the scale $H$ is associated only with the partition of the boundaries of the subdomains $T$
of the partition $ \mathcal{P}$.

\subsection{Multiscale FEM}

The methods we are interested in seek an approximation to $(u, \boldsymbol{q}, u|_{\Eh})$
by the hybridized discontinuous Galerkin finite element method. For this purpose we need finite element spaces 
for these quantities consisting of piece-wise polynomial functions. Namely, we introduce
\begin{alignat*}{1}
W_h:=&\;\{w\in{L}^2(\Oh):\;w|_K\in W(K),\; K\in\Oh\}, 
\\
\boldsymbol{V}_h:=&\;\{\boldsymbol{v}\in\boldsymbol{L}^2(\Oh): \;\boldsymbol{v}|_K\in\boldsymbol{V}(K), \; K\in\Oh\}, 
\\
M_{h,H}:=&M^0_h \oplus M_H,\\
\intertext{where the spaces $M^0_h, M_H$ are defined as}
M^0_h:=&\;\{\mu\in{L}^2(\EhH): \; \text{  for } F\in\Eh^0 ~~ \mu|_{F}\in M_h(F), \;  \text{  and } \; \mu|_{\mathcal{E}_H} = 0\},\\
M_H:=&\;\{\mu\in{L}^2(\EhH):\;\text{  for }  F \in\EH  ~~ \mu|_{F}\in M_H(F),\;\;  \text{  and } \mu|_{{\Eh^0}\cup \partial \Omega} = 0\}.
\end{alignat*}
Now the hybridizable multicsale DG FEM reads as follows: find
$(u_h, \boldsymbol{q}_h,$ $\uhat)$ in the space $W_h \times \Vh \times M_{h,H}$
that satisfies the following weak problem
\begin{subequations}
\label{weak formulation}
 \begin{alignat}{3}
  \label{weak formulation-1}
 \bint{\al \boldsymbol{q}_h}{\boldsymbol{v}} &-\bint{u_h}{\nabla \cdot \boldsymbol{v}} &
      & + \bintEh{\uhat}{ \boldsymbol{v} \cdot \boldsymbol{n}} && = 0 \quad \forall \boldsymbol{v} \in \Vh, \\
\label{weak formulation-2}
-\bint{\boldsymbol{q}_h}{\nabla w} & && + \bintEh{\qhat \cdot \n}{w} && = \bint{f}{w} \quad \forall w \in W_h,\\
\label{weak formulation-3}
& && \quad \, \langle{\qhat \cdot \n},{\mu}\rangle_{\partial\Oh} && = 0 \quad \forall \mu \in M_{h,H}, \\
\label{weak formulation-4}
& && \quad \widehat{u}_{h,H} && =  0 \quad \text{on $\partial \Omega$}.
 \end{alignat}
\end{subequations}
Since by the requirement $\uhat \in M_{h,H}$  the last equation is trivially satisfied and therefore
it is redundant. However, we
 prefer to have it written explicitly for later use in the error analysis.

For $\mathcal{T}= \mathcal{T}_h, \TT$, we write 
$(\eta\;,\;\zeta)_{\mathcal{T}} := \sum_{K \in \mathcal{T}} (\eta, \zeta)_K$,  
where $(\eta,\zeta)_D$ denotes the integral of $\eta\zeta$ over 
the domain $D \subset \mathbb{R}^n$. We also write
$\langle{\eta}\; , \; {\zeta}\rangle_{\partial \mathcal{T}}:= \sum_{K \in \mathcal{T}} \langle \eta \,,\,\zeta \rangle_{\partial K},$
where $\langle \eta \,,\,\zeta \rangle_{\partial D}$ 
denotes the integral of $\eta \zeta$ over the boundary of the
domain $D \subset \mathbb{R}^{n-1}$. The definition of the method
is completed with the definition of the normal component of the numerical trace:
\begin{equation}
\label{trace-q}
 \qhat\cdot \n = \boldsymbol{q}_h \cdot \n + \alphaa (u_h - \uhat) \quad \text{ on $\partial \Oh$}.
\end{equation}

On each $K \in \mathcal{T}_h$, the stabilization parameter $\alphaa$ 
is non-negative constant on each $F \in \partial K$ and we 
assume that $\alphaa > 0$ on at least one face $F^* \in \partial K$. 
By taking particular choices of the local spaces $\boldsymbol{V}(K)$, $W(K)$ and $M_h(F), M_H(F)$,
and the {\em linear local stabilization} {operator} $\alphaa$, various mixed ($\alphaa=0$) 
and HDG ($\alphaa\neq0$) methods
are obtained. For a number of  such choices we refer to \cite{CockburnGL2009,CockburnQiuShi2012}.
We note that on each fine element $K \in \mathcal{T}_h$ the local spaces 
$W(K) \times \boldsymbol{V}(K) \times M_h(F)$ can be any set of the spaces presented 
in  \cite[Tables 1 -- 9]{CockburnQiuShi2012}.
It could be any classical mixed elements or the HDG elements defined on different triangulations. 
In Table \ref{table:simplex} we give examples of local spaces for the classical mixed element 
and HDG element defined on a simplex.

\begin{table}[h!]
 \caption{Possible choices for the finite element spaces  for $K$ a simplex.}
\centering
\begin{tabular}{c c c c c}
\hline
\noalign{\smallskip} 
method &         $\boldsymbol{V}(K)$         &     $W(K)$         &      $M_h(F), \; F \in \partial K$    &     $M_H(F), \; F \in \mathcal{E}_H$ \\
\noalign{\smallskip}
\hline\hline
\noalign{\smallskip}
${\mathbf{BDFM}_{k+1}}$ &\hskip-1truecm $\{\boldsymbol{q}\in \boldsymbol{P}^{k+1}(K):$
                      & $P^{k}(K)$ & $P^k(F)$ & $P^{l}(F)$ \\
                      & $\boldsymbol{q} \cdot \n|_{\partial K} \in P^{k}(F), \; \forall F \in \partial K \} $&&&\\
${\mathbf{RT}_k}$ & $\boldsymbol{P}^k(K) \oplus \boldsymbol{x} \widetilde{P}^k(K)$ & $P^k(K)$ & $P^k(F)$ & $P^{l}(F)$ \\
${\mathbf{HDG}_k}$ & $\boldsymbol{P}^k(K)$ & $P^k(K)$ & $P^k(F)$ & $P^{l}(F)$ 
\\
\noalign{\smallskip} 
\hline 
\end{tabular}
\label{table:simplex}
\end{table}


One feature of our formulation is that the choice of the space $M_H(F)$ is totally free. In this paper,  we will consider 
two different choices. The first choice is
the space of piece-wise polynomials defined in \eqref{Lagrange}, while the second is the space 
uses multiscale functions defined in \eqref{Lagrange-MS}. In general it can consist of any function spaces.

\subsection{The upscaled structure of the method}\label{upscaledFEM}
The main feature of this method is that it could be implemented in such a way that we
need to solve certain global system on the coarse mesh $\mathcal{T}_H$ only.
To show this possibility, we  split \eqref{weak formulation-3} into two equations by testing separately 
 with  $\mu \in M^0_h $ and $\mu \in M_H$ so that
\begin{equation}\label{split-system}
\langle{\qhat \cdot \n},{\mu}\rangle_{\partial\Oh}  = 0 ~~~\forall \mu \in M^0_h \quad \text{and} \quad
\langle{\qhat \cdot \n},{\mu}\rangle_{\partial\Oh} = \langle{\qhat \cdot \n},{\mu}\rangle_{\partial\mathcal{T}_H}  = 0
~~~\forall \mu \in M_H.
\end{equation} 
Here $\langle \eta\;,\;\zeta\rangle_{\partial\mathcal{T}_H}:= \sum^n_{T \in \TT} \int_{\partial T} \eta \, \zeta ds$.
On any subdomain $T$, given the boundary data of $\widehat{u}_{h,H} = \xi_H$ for 
$\xi_H \in M_H(F), F \in  \mathcal{E}_{H}(T)$, 
we can solve for $(\boldsymbol{q}_h, u_h, \widehat{u}_{h,H})|_T$ by 
restricting the equations \eqref{weak formulation-1}--\eqref{weak formulation-3}
on this particular $T$:
\begin{alignat*}{3}
  \bintOhi{\al \boldsymbol{q}_h}{\boldsymbol{v}} &-\bintOhi{u_h}{\nabla \cdot \boldsymbol{v}} 
&& + \bintET{\uhat}{ \boldsymbol{v} \cdot \boldsymbol{n}} && = 0, \\
-\bintOhi{\boldsymbol{q}_h}{\nabla w} & &&  +\bintET{\qhat \cdot \n}{w} && = \bintOhi{f}{w}, \\
& && \quad \, \langle{\qhat \cdot \n},{\mu}\rangle_{\partial \TT} && = 0,\\
& && \quad \,\qhat\cdot \n  &&= \boldsymbol{q}_h \cdot \n + \alphaa (u_h - \uhat) ~~ \text{ on} ~~\partial \TT\\
& && \; \quad \uhat &&= \xi_H \quad \text{on $\partial T$,}
\end{alignat*}
for all $(w, \boldsymbol{v}, \mu) \in W_h|_{T} \times \boldsymbol{V}_h|_{T} \times M^0_h|_{\mathcal{E}^0_h(T)}$. 
In fact the above local system is
the regular HDG methods defined on $T$.  From \cite{CockburnQiuShi2012} 
we already know that this system is stable.
Hence, this  HDG solver defines a global {\it affine}  mapping from $M_H$ to $W_h \times \boldsymbol{V}_h \times M^0_h$.  
The solution can be further split into two parts, namely, 
%
$$ 
(\boldsymbol{q}_h, u_h, \widehat{u}_{h,H}) =(\boldsymbol{q}_h(f), u_h(f), 
\widehat{u}_{h,H}(f)) 
+(\boldsymbol{q}_h(\xi_H), u_h(\xi_H), \widehat{u}_{h,H}(\xi_H))
$$ 
where $ (\boldsymbol{q}_h(f), u_h(f), \widehat{u}_{h,H}(f)) $ satisfies
\begin{alignat*}{3}
 \bintOhi{\al \boldsymbol{q}_h(f)}{\boldsymbol{v}} &-\bintOhi{u_h(f)}{\nabla \cdot \boldsymbol{v}} && 
      + \bintET{\uhat(f)}{ \boldsymbol{v} \cdot \boldsymbol{n}} && = 0, \\
-\bintOhi{\boldsymbol{q}_h(f)}{\nabla w} & && + \bintET{\qhat(f) \cdot \n}{w} && = \bintOhi{f}{w}, \\
& && \quad \, \langle{\qhat(f) \cdot \n},{\mu}\rangle_{\partial \TT} && = 0,\\
& && \; \quad \uhat &&= 0 \quad \text{on $\partial T$,}
\end{alignat*}
for all $(w, \boldsymbol{v}, \mu) \in W_h|_{T} \times \boldsymbol{V}_h|_{T} \times M^0_h|_{\mathcal{E}^0_h(T) }$
and $ (\boldsymbol{q}_h(\xi_H), u_h(\xi_H), \widehat{u}_{h,H}(\xi_H))$ satisfies
\begin{alignat*}{3}
 \bintOhi{\al \boldsymbol{q}_h(\xi_H)}{\boldsymbol{v}} &
-\bintOhi{u_h(\xi_H)}{\nabla \cdot \boldsymbol{v}} && 
+ \bintET{\uhat(\xi_H)}{ \boldsymbol{v} \cdot \boldsymbol{n}} && = 0, \\
-\bintOhi{\boldsymbol{q}_h(\xi_H)}{\nabla w} & && + \bintET{\qhat(\xi_H) \cdot \n}{w} && = 0, \\
& && \quad \, \langle{\qhat \cdot \n},{\mu}\rangle_{\partial \TT} && = 0,\\
& && \; \quad \uhat(\xi_H) &&= \xi_H \quad \text{on $\partial T$,}
\end{alignat*}
for all $(w, \boldsymbol{v}, \mu) \in W_h|_{T} \times \boldsymbol{V}_h|_{T} \times M^0_h|_{\mathcal{E}^0_h(T)}$.

Then the second equation \eqref{split-system}
reduces to 
\begin{equation}\label{upscaled equation}
 a(\xi_H, \mu) = l(\mu) 
\qquad \text{for all $\mu \in M_H$,} 
\end{equation}
where the bilinear form $a(\xi_H, \mu): M_H \times M_H \to R$ 
and the linear form $ l(\mu): M_H \to R$ are defined as
\begin{equation}\label{form-a}
a(\xi_H, \mu):= \bintEH{\qhat(\xi_H) \cdot \n}{\mu}\quad \text{and} \quad  l(\mu):=a(f,\mu)=\bintEH{\qhat(f) \cdot \n}{\mu} . 
\end{equation} 

\begin{remark}
The same procedure can be applied also for the case of non-homogeneous data $u=g$ on $\partial \Omega$.
However, the presentation of this case is much more cumbersome. In order to simplify the notations and to highlight 
the main features of this method we have assumed  homogeneous Dirichlet boundary data.
\end{remark}

\subsection{Existence of the solution of the FEM} \label{solvability}

The framework is general in terms of flexibility in  the choice of the local spaces. 
However, in order to ensure the solvability of the system, we need some 
assumptions. 
%
\begin{assumption}\label{local_lifting_0}
For any $K \in \mathcal{T}_h$,  $F^*$  an arbitrary face of $K$, 
and $\mu \in M_h(F), F \in \partial K$, 
there exists
a element $\boldsymbol{Z} \in \boldsymbol{V}(K)$ such that
\begin{alignat*}{2}
(\boldsymbol{Z}, \nabla w) & = 0, \quad && \text{for all $w \in W(K)$,} \\ 
\boldsymbol{Z} \cdot \n|_F &= \mu, \quad && \text{for all $F \in \partial K \backslash F^*$.} 
\end{alignat*} 
\end{assumption}

This assumption is trivially satisfied by all classical  mixed finite elements, e.g.
\textbf{RT, BDM, BDDF}, etc. For these elements one can simply define 
$\boldsymbol{Z} = \boldsymbol{\Pi}_h  \boldsymbol{Q}$, where
$\boldsymbol{Q}$ is any solution of the problem:
$$ 
 \nabla \cdot \boldsymbol{Q} = 0 \quad \text{in $K$} \quad \mbox{and} \quad
\boldsymbol{Q} \cdot \n = \mu \quad \text{on $\partial K$},
$$ 
where $\boldsymbol{\Pi}_h$ is the Fortin projection to the mixed elements (see, e.g. \cite{Brezzi_Fortin_book}). 
For the case of simplex triangulations and HDG elements, 
we refer the reader to  \cite[Lemma 3.2]{CockburnDongGuzman2008}. 
The proof for other HDG elements are very similar to the case of simplicial elements considered 
in \cite{CockburnDongGuzman2008}.

Further, we need an assumption on the stabilization parameter $\alphaa$:
\begin{assumption}\label{tau-parameter}
On each $F_H \in \mathcal{T}_{E}$, for any $T$ adjacent to $F_H$, i.e. $\bar{T} \cap F_H \ne \emptyset$, 
there exists at least one element $K \in \mathcal{T}_T$ adjacent to $F_H$, 
such that the stabilization operator $\alphaa > 0$ on $F^* = F_H \cap \partial K$.
\end{assumption}

We are now ready to show the solvability of the method.
\begin{theorem}\label{stability}
 Let Assumptions \ref{local_lifting_0}  and  \ref{tau-parameter} be satisfied.
Then for any $f$, the 
system \eqref{weak formulation} has a unique solution. 
\end{theorem}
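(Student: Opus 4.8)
Since \eqref{weak formulation} is a square linear system in the finite-dimensional space $W_h \times \Vh \times M_{h,H}$, it suffices to show that $f=0$ forces the only solution to be zero. So I would set $f=0$ and aim to conclude $u_h=0$, $\boldsymbol{q}_h=0$, $\uhat=0$.

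The plan is to run the standard HDG energy argument. First I would take $\boldsymbol{v}=\boldsymbol{q}_h$ in \eqref{weak formulation-1}, $w=u_h$ in \eqref{weak formulation-2}, and $\mu=\uhat$ in \eqref{weak formulation-3}, then add the three identities. Using integration by parts to combine the volume terms $\bint{u_h}{\nabla\cdot\boldsymbol{q}_h}$ and $\bint{\boldsymbol{q}_h}{\nabla u_h}$, and substituting the definition \eqref{trace-q} of $\qhat\cdot\n$, the interface terms collapse telescopically to the single boundary contribution, and I expect to arrive at the energy identity
\begin{equation*}
\bint{\al\boldsymbol{q}_h}{\boldsymbol{q}_h} + \bintEh{\alphaa(u_h-\uhat)}{u_h-\uhat} = 0.
\end{equation*}
Here I use that $\uhat\in M_{h,H}$ so that $\langle\qhat\cdot\n,\uhat\rangle_{\partial\Oh}=0$ by \eqref{weak formulation-3}, and that $\uhat=0$ on $\partial\Omega$. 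Since $\al=\kappa^{-1}\ge \al_0>0$ and $\alphaa\ge0$, both terms are non-negative, hence both vanish. This gives $\boldsymbol{q}_h=0$ on all of $\Omega$, and $u_h=\uhat$ on every face $F\in\partial K$ where $\alphaa>0$.

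Next I would upgrade these to $u_h\equiv0$ and $\uhat\equiv0$. With $\boldsymbol{q}_h=0$, equation \eqref{weak formulation-1} reduces to $-\bint{u_h}{\nabla\cdot\boldsymbol{v}} + \bintEh{\uhat}{\boldsymbol{v}\cdot\n}=0$ for all $\boldsymbol{v}\in\Vh$. Working element by element: on a fixed $K$, I invoke Assumption \ref{local_lifting_0} with the face $F^*$ chosen (via Assumption \ref{tau-parameter}) to be one where $\alphaa>0$, so that $u_h=\uhat$ on $F^*$ already. For the lifted element $\boldsymbol{Z}\in\boldsymbol{V}(K)$ one has $(\boldsymbol{Z},\nabla w)_K=0$, so the $u_h$ volume term drops, leaving $\langle\uhat,\boldsymbol{Z}\cdot\n\rangle_{\partial K}=\langle\uhat,\mu\rangle_{\partial K\setminus F^*}=0$ for arbitrary $\mu\in M_h(F)$ on the non-$F^*$ faces; combined with the interior continuity built into $M^0_h$ and the zero boundary condition, this propagates $\uhat=0$ across the whole coarse block and onto $\partial\Omega$, hence $\uhat\equiv0$ and then $u_h=\uhat=0$ on $F^*$. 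To get $u_h=0$ in the interior of $K$ as well, I would use \eqref{weak formulation-1} once more: with $\uhat=0$ and $\boldsymbol{q}_h=0$ it says $\bint{u_h}{\nabla\cdot\boldsymbol{v}}=0$ for all $\boldsymbol{v}\in\Vh$, and since the local pair $(\boldsymbol{V}(K),W(K))$ is one of the admissible HDG/mixed pairs, $\nabla\cdot\boldsymbol{V}(K)\supseteq W(K)$ (or the Fortin-projection property holds), forcing $u_h|_K=0$.

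The main obstacle is the bookkeeping in the second half: making the propagation of $\uhat=0$ from the single distinguished face $F^*$ of one element to the entire skeleton $\EhH$ fully rigorous, and cleanly tying together the three nested meshes and the two pieces $M^0_h$, $M_H$ of the multiplier space. This is exactly the point where Assumptions \ref{local_lifting_0} and \ref{tau-parameter} do their work — the former guarantees the lifting $\boldsymbol{Z}$ exists for every element, the latter guarantees that each coarse face $F_H$ has a touching fine element on which $\alphaa>0$ so that the energy identity already pins $u_h=\uhat$ there — and it should be presented as a local-to-global induction over elements sharing faces, together with the observation that the method restricted to any single subdomain $T$ is a standard HDG method and hence uniquely solvable by \cite{CockburnQiuShi2012}.
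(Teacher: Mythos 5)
Your first half is exactly the paper's argument: the square-system reduction, the energy identity $\bint{\al\boldsymbol{q}_h}{\boldsymbol{q}_h}+\bintEh{\alphaa(u_h-\uhat)}{u_h-\uhat}=0$ obtained by testing with $(u_h,\boldsymbol{q}_h,\uhat)$, and the conclusions $\boldsymbol{q}_h=0$ and $\alphaa(u_h-\uhat)=0$ are all correct and match the paper.

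The second half has a genuine gap. The lifting step does not give what you claim: you must integrate \eqref{weak formulation-1} by parts on $K$ before the condition $(\boldsymbol{Z},\nabla w)_K=0$ is of any use (it annihilates $(\nabla u_h,\boldsymbol{Z})_K$, not $(u_h,\nabla\cdot\boldsymbol{Z})_K$), and the surviving boundary term is then $\langle \uhat-u_h,\boldsymbol{Z}\cdot\n\rangle_{\partial K}$, not $\langle \uhat,\boldsymbol{Z}\cdot\n\rangle_{\partial K}$. Choosing $\boldsymbol{Z}\cdot\n=P^h_{\partial}\uhat-u_h$ on $\partial K\setminus F^*$ therefore yields only $u_h=P^h_{\partial}\uhat$ on $\partial K$; it cannot yield $\uhat=0$ face by face, and indeed $\uhat\equiv 0$ is the wrong intermediate target: the homogeneous local relations are consistent with $u_h=\uhat=C_T$ for an arbitrary constant on each subdomain $T$, and that constant is killed only at the very end by $\uhat|_{\partial\Omega}=0$. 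The paper's proof accordingly first shows $u_h$ is elementwise constant (from $(\nabla u_h,\boldsymbol{v})_K=0$ with $\boldsymbol{v}=\nabla u_h$, which uses $\nabla W(K)\subset\boldsymbol{V}(K)$), then uses $u_h=P^h_{\partial}\uhat$ together with the conformity of $\TT$ to get $u_h=\uhat=C_T$ on each $T$, then uses Assumption \ref{tau-parameter} — whose actual role is to supply, on each coarse face $F_H$, fine elements on \emph{both} sides with $\alphaa>0$ so that $C_{T_1}=\uhat|_{F_H}=C_{T_2}$ — to glue the constants across the coarse skeleton, and only then invokes the Dirichlet condition. (The existence of some face $F^*$ of each $K$ with $\alphaa>0$ comes from the standing assumption stated after \eqref{trace-q}, not from Assumption \ref{tau-parameter}.) Separately, your final step deducing $u_h|_K=0$ from $\bint{u_h}{\nabla\cdot\boldsymbol{v}}=0$ via $\nabla\cdot\boldsymbol{V}(K)\supseteq W(K)$ fails for the $\mathbf{HDG}_k$ pair, where $\nabla\cdot\boldsymbol{P}^k(K)=P^{k-1}(K)$ is a proper subspace of $W(K)=P^k(K)$, and is not implied by Assumptions \ref{local_lifting_0} and \ref{tau-parameter} under which the theorem is stated; the paper's route through elementwise constancy and the trace identity avoids needing any such surjectivity.
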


\begin{proof}
Notice that the system \eqref{weak formulation} is a square system. It suffices to show 
that the homogeneous system has only the trivial solution. 
From \eqref{weak formulation-4} we see that $\uhat = 0$ on $\partial \Omega$.
Now assume that $( u_h, \boldsymbol{q}_h, \uhat )$ is any solution of \eqref{weak formulation}.
Setting $(w, \boldsymbol{v}, \mu) = (u_h, \boldsymbol{q}_h, \uhat)$ in 
\eqref{weak formulation-1}-\eqref{weak formulation-3} and adding all equations, we get 
after some algebraic manipulation,
\[
\bint{\al \boldsymbol{q}_h}{\boldsymbol{q}_h} - 
\bintEh{\boldsymbol{q}_h \cdot \n - \qhat \cdot \n}{u_h - \uhat} = 0.
\]
 By the definition of the numerical traces \eqref{trace-q}, we have
\[
\bint{\al \boldsymbol{q}_h}{\boldsymbol{q}_h} + \bintEh{\alphaa(u_h - \uhat)}{u_h - \uhat} = 0
\]
and since $\alphaa \ge 0$ we get
\begin{equation}\label{stability_1}
\boldsymbol{q}_h = 0, \qquad \alphaa(u_h - \uhat) = 0
\end{equation}
and \eqref{weak formulation-1} becomes
\[
-\bint{u_h}{\nabla \cdot \boldsymbol{v}} + \bintEh{\uhat}{\boldsymbol{v} \cdot \n} = 0, 
       \quad \text{for all $\boldsymbol{v} \in \boldsymbol{V}_h$}.
\]
Now we take this over an element $K$
and after integration by parts 
we get
\begin{equation}\label{boundary-term}
\bintK{\nabla u_h}{\boldsymbol{v}} + \bintEK{\uhat - u_h}{\boldsymbol{v} \cdot \n} = 0, 
          \quad \text{for all $\boldsymbol{v} \in \boldsymbol{V}(K)$.}
\end{equation}
Since $\alphaa > 0$ on $F^* \in \partial K$ than the second equality
\eqref{stability_1} implies that
$
u_h - \uhat =0 \quad \mbox{on} \quad F^*.
$
Next, by Assumption \ref{local_lifting_0},  there is
$\boldsymbol{v} \in \boldsymbol{V}(K)$ such that
\begin{alignat*}{2}
(\boldsymbol{v}, \nabla w) & = 0, \quad && \text{for all $w \in W(K)$,}\\
\boldsymbol{v} \cdot \n|_F &= P^h_{\partial} \uhat - u_h, \quad && 
     \text{for all $F \in \partial K \backslash F^*$.} 
\end{alignat*} 
where for $K \in \mathcal{T}_h$, $P^h_{\partial}: \, L^2(F)  \to M_h(F)$ is the local $L^2-$orthogonal projection  
onto $M_h(F)$, for all $F \in \partial K$.
Inserting such $\boldsymbol{v}$ in \eqref{boundary-term}, we get 
\begin{alignat*}{1}
0  = \bintK{\nabla u_h}{\boldsymbol{v}} + \bintEK{\uhat - u_h}{\boldsymbol{v} \cdot \n} 
  = \langle P^h_{\partial} \uhat - u_h \, , \, P^h_{\partial} \uhat - u_h \rangle_{\partial K \backslash F^*}.
\end{alignat*}
This implies that $P^h_{\partial} \uhat - u_h = 0$ on $\partial K \backslash F^*$. 
Since on $F^*$, $P^h_{\partial} \uhat - u_h = P^h_{\partial} (\uhat - u_h) = 0$
we get 
\begin{equation}\label{stability_2}
P^h_{\partial} \uhat - u_h = 0 \quad \text{on $\partial{K}$, \; for all $K \in \mathcal{T}_h$}.
\end{equation}

Moreover, this means that $\bintK{\nabla u_h}{\boldsymbol{v}} = 0$ for all $\boldsymbol{v} \in \boldsymbol{V}(K)$.
Taking $\boldsymbol{v} = \nabla u_h$, we have $u_h$ is piecewise constant on each $K \in \mathcal{T}_h$.
The above equation shows that $u_h = P^h_{\partial} \uhat$ on $\partial K$. 
On each $T$, $\mathcal{T}_T$ is a conforming triangulation, so
this implies that for any interior face $F \in \mathcal{E}^0_{h}$ shared by two neighboring 
elements $K^+, K^-$, the local spaces satisfy $M_h(F^+) = M_h(F^-)$ and hence
$P^h_\partial {\uhat}$ coincides from both sides. This implies that in fact $u_h= C_T$ in each subdomain $T$ 
and $\uhat|_{\mathcal{E}^0_h \cap T} = C_T$.

Next, on each $F_H \in \mathcal{E}_{H}$, we assume $F_H \subset \bar{T_1} \cap \bar{T_2}$, if 
$F_H \subset \partial \Omega$ then $F_H \subset \partial T_1$. By  Assumption \ref{tau-parameter} 
there exists $K_1 \in \mathcal{T}_{T_1}, K_2 \in \mathcal{T}_{T_2}$ adjacent
to $F_H$ such that $\tau > 0$ on $F_i = \partial K_i \cap F_H$, $i=1,2$. By \eqref{stability_1}, we have 
\[
\uhat - u_h = 0 \quad \text{on $F_i,\quad i = 1, 2$.}
\]
This implies that $\uhat|_{F_H} = C_{T_1} = C_{T_2}$. Hence we have $C_{T}=C $ for all $T$, 
which means that
$u_h = C$ over the domain $\Omega$ and $\uhat|_{\mathcal{E}_{h}} = C$. 
Finally, by the fact that $\uhat = 0$ on $\partial \Omega$, we must have $u_h = \uhat=C = 0$ and
this completes the proof.  
\end{proof}

In \cite{Arbogast_PWY_07}, in order to ensure the solvability of the mortar methods, the key assumption (roughly speaking) is 
that on $\mathcal{E}$ the fine scale space $M_h$ should be rich enough comparing with the coarse scale space $M_H$.
In this paper, since the stabilization is achieved by the parameter $\alphaa$ we prove stability under  the assumption  
that on each $F_H \in \mathcal{E}_H$ the parameter $\alphaa$ is strictly 
positive on some portion of $F_H$. We do not need any conditions between the local spaces $M_h(F_h)$ and $M_H(F_H)$.

\section{Error Analysis}\label{errors}
In this section we derive error estimates for the proposed above method. We would like
stress on two important points of this method.  First,
in the most general case we have three different scales in our partitioning. The error estimates
should reflect this generality of the setting.
Second, upon different choices of the spaces and the stabilization strategy (i.e.
the choice of the parameter $\alphaa$) we can get different convergence rates. 
For example, to obtain error estimates of optimal order we have to make some additional
assumptions. All these are discussed in this section. For the sake of simplicity, we assume that the nonzero stabilization 
parameter $\tau$ is constant on all element $K \in \Oh.$ In this section and the one follows, we only consider the method 
with the coarse space defined by polynomials, that is:
\begin{equation}\label{Lagrange}
M_H(F) = P^l(F), \quad \text{for all} \quad F \in \mathcal{E}_H.
\end{equation}

\subsection{Preliminary Results}\label{prelims}

We present the main results in this section. In order to carry out a priori error estimates, 
we need some additional assumptions on the scheme.
The first assumption is identical to \emph{Assumption A} in \cite{CockburnQiuShi2012}, in order to be self-consistent, we still present it here:
\begin{assumption}\label{assum-inclusions}
The local spaces satisfy the following inclusion property:
\begin{subequations}\label{inclusion_prop}
\begin{alignat}{1}
\label{W_in_M} W(K)|_{F} \subset M_h(F) \quad & \text{for all $F \in \partial K$}, \\
\label{V_in_M} \boldsymbol{V}(K) \cdot \n|_F \subset M_h(F) \quad & \text{for all $F \in \partial K$}.
\end{alignat}
\end{subequations}
On each element $K \in \mathcal{E}_h$, there exist  local projection operators
%
$$
\Piw:~ H^1(K) \to W(K)  \quad \text{and} \quad \Piv:~ \boldsymbol{H}_{div}(K) \to \boldsymbol{V}(K)
$$
associated with the spaces $W(K)$, $\boldsymbol{V}(K)$, $M_h(F)$ defined by:
\begin{subequations}\label{HDG projection}
 \begin{alignat}{2}\label{HDG projection-1}
  (u,w)_K &= (\Piw u, w)_K \quad && \text{for all $w \in \nabla \cdot \boldsymbol{V}(K)$,}\\
\label{HDG projection-2}
 (\boldsymbol{q}, \boldsymbol{v})_K &= (\Piv \boldsymbol{q}, \boldsymbol{v})_K  
               \quad && \text{for all $\boldsymbol{v} \in \nabla W(K)$,}\\
\label{HDG projection-3}
\langle \boldsymbol{q} \cdot \n + \alphaa u \, , \, \mu \rangle_F 
              &= \langle \Piv \boldsymbol{q} \cdot \n + \alphaa \Piw u \, , \, \mu \rangle_F 
            \quad && \text{for all $\mu \in M_h(F), F \in \partial K$.} 
 \end{alignat}
\end{subequations}
\end{assumption}

\begin{assumption}\label{single-faced HDG}
On each fine element $K$, the stabilization operator $\tau$ is strictly positive on only one face $F \in \partial K$. 
\end{assumption}

\begin{assumption}\label{tau_positive_EH}
For any element $K$ adjacent to the skeleton  $\partial \mathcal{P} $ 
on a face $F$, shared by $K$ and $\partial \mathcal{P}$,
$\tau$ is strictly positive, i.e.  $\tau|_F > 0$.
\end{assumption}

The above suggested local spaces $W(K) \times \boldsymbol{V}(K) \times M_h(F)$
or any set of local spaces presented in \cite{CockburnQiuShi2012} 
satisfy Assumption \ref{assum-inclusions}. 
Moreover, 
assumptions \ref{single-faced HDG} and \ref{tau_positive_EH} are the key to obtain optimal approximation results. 
In fact, without these two assumptions, we can still get some error estimates. However, the result will have a term with 
negative power of $h$ which is not 
desirable since $h$ is the finest scale. We will discuss this issue at the end of Section 4.2. 

As a consequence of Assumption \ref{single-faced HDG} and \ref{tau_positive_EH}, 
the triangulation of each subdomain has 
to satisfy the requirement that {\it 
each fine scale finite element $K \in \mathcal{T}_h$ can share at most one face with the 
coarse skeleton $\mathcal{E}_H$.}
This requirement implies that we need to put at least two fine elements to fill a corner of any subdomain. 
This suggests that we should use triangular (2D) or tetrahedral (3D) elements. In what follows, we restrict the choice 
of local spaces to be in Table \ref{table:simplex}. Notice that here we exclude the famous $\mathbf{BDM}_k$ space 
from the table. Roughly speaking, the reason is that in the case of $\mathbf{BDM}_k$ element, the local space $W(K) = P^{k-1}(K)$  
is too small to provide a key property for the optimality of the error bound, see Lemma \ref{H_div_conforming}. 

In \cite{CockburnQiuShi2012} it has been shown that for any 
$(u,\boldsymbol{q}) \in H^1(K) \times \boldsymbol{H}_{div}(K)$, 
the projection $(\Piw u, \Piv \boldsymbol{q}) \in W(K) \times \boldsymbol{V}(K)$ 
exists and is unique. 
Moreover, for all elements listed in Table \ref{table:simplex}, the projection has 
the following approximation property: 
\begin{lemma}\label{projection approximation}
If the local spaces $\boldsymbol{V}(K), W(K)$ are mixed element spaces $\mathbf{RT}_k$ or $\mathbf{BDFM}_{k+1}$,  then 
\begin{align*}
\|\boldsymbol{q} - \Piv \boldsymbol{q}\|_K \le  C h^s(\|\boldsymbol{q}\|_{s,K} + \tau \|u\|_{s,K}) \quad \mbox{and} \quad
\|u-\Piw u\|_K \le  C h^s \|u\|_{s,K}
\end{align*}
and if the local spaces $\boldsymbol{V}(K), W(K)$ are  $\mathbf{HDG}_k$ spaces, then
\begin{align*}
\|\boldsymbol{q} - \Piv \boldsymbol{q}\|_K \le & C h^s(\|\boldsymbol{q}\|_{s,K}) \quad  \mbox{and} \quad 
\|u-\Piw u\|_K \le  C h^s (\|u\|_{s,K} + \tau^{-1} \|\boldsymbol{q}\|_{s,K})
\end{align*}
for all $ 1 \le s \le k+1$.
\end{lemma}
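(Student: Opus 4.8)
The plan is to use that the pair $(\Piw,\Piv)$ is defined element by element, so it suffices to prove the two bounds on a single fixed $K$ and then sum over $K\in\Oh$. Existence and uniqueness of $(\Piw u,\Piv\boldsymbol q)$ for the elements of Table~\ref{table:simplex} are already available from \cite{CockburnQiuShi2012,CockburnDongGuzman2008}, and the estimates are of the same nature as the classical analysis of the HDG projection; I would reproduce the argument in three steps. First, an affine scaling $x=x_K+h_K\hat x$ to a shape-regular reference element $\widehat K$: the relations \eqref{HDG projection-1}--\eqref{HDG projection-3} retain their algebraic form on $\widehat K$ (with the same value of $\tau$, the surface and volume Jacobians cancelling in \eqref{HDG projection-3}), while the $L^2$-norms scale by the expected powers of $h_K$. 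Consequently the advertised $h$-powers and the independence of $C$ from $h$ and from $\tau$ will follow once one checks that the finite-dimensional linear maps on $\widehat K$ implicit in \eqref{HDG projection} are invertible with bounds uniform in $\tau\in[0,\infty]$.

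Second, for the mixed spaces $\mathbf{RT}_k$ and $\mathbf{BDFM}_{k+1}$ one has $\nabla\cdot\boldsymbol V(K)=W(K)=P^k(K)$. Hence \eqref{HDG projection-1} already forces $\Piw u$ to be the $L^2(K)$-orthogonal projection of $u$ onto $P^k(K)$, which gives $\|u-\Piw u\|_K\le Ch^s\|u\|_{s,K}$ by the Bramble--Hilbert lemma, with no $\tau$. For the flux I would compare $\Piv\boldsymbol q$ with the projection $\Piv^{0}\boldsymbol q$ obtained by the same recipe but with $\tau\equiv 0$ on $\partial K$ -- a classical mixed projection satisfying $\|\boldsymbol q-\Piv^{0}\boldsymbol q\|_K\le Ch^s\|\boldsymbol q\|_{s,K}$. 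By Assumption~\ref{single-faced HDG} the parameter $\tau$ is positive on exactly one face $F^*$ of $K$, so the correction $\boldsymbol\delta:=\Piv\boldsymbol q-\Piv^{0}\boldsymbol q\in\boldsymbol V(K)$ has, by \eqref{HDG projection-2} and \eqref{HDG projection-3}, vanishing moments against $\nabla W(K)$, vanishing normal trace on $\partial K\setminus F^*$, and normal trace equal to the $L^2(F^*)$-projection of $\tau(u-\Piw u)$ onto $M_h(F^*)$. Thus $\boldsymbol\delta$ solves a finite-dimensional problem whose only datum is $\tau(u-\Piw u)$ on $F^*$; the associated reference-element stability bound (which is $\tau$-free, $\tau$ sitting only in the datum), the trace inequality \eqref{eq:trace}, and the estimate for $u-\Piw u$ just obtained yield $\|\boldsymbol\delta\|_K\le C\tau h^s\|u\|_{s,K}$, whence the first pair of bounds by the triangle inequality.

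Third, for the $\mathbf{HDG}_k$ element $\nabla\cdot\boldsymbol V(K)=P^{k-1}(K)$ is strictly smaller than $W(K)=P^k(K)$ and $\nabla W(K)$ is strictly smaller than $\boldsymbol V(K)=\boldsymbol P^k(K)$, so \eqref{HDG projection-1} no longer pins down $\Piw u$ completely, the equations for $\Piw u$ and $\Piv\boldsymbol q$ are genuinely coupled through the face $F^*$, and one cannot reduce to a classical projection. Here I would follow the usual bookkeeping: \eqref{HDG projection-1} fixes the $P^{k-1}(K)$-component of $\Piw u$ as the $L^2$-projection of $u$, \eqref{HDG projection-2} fixes the moments of $\Piv\boldsymbol q$ against $\nabla W(K)$, and on the faces other than $F^*$ equation \eqref{HDG projection-3} fixes $\Piv\boldsymbol q\cdot\boldsymbol n$ to be the $L^2$-projection of $\boldsymbol q\cdot\boldsymbol n$, a $\tau$-free quantity. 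The extra degrees of freedom in $\boldsymbol P^k(K)$ are exactly enough for the single coupled vector equation on $F^*$ to determine $\Piv\boldsymbol q$ in terms of $\boldsymbol q$ alone, which gives $\|\boldsymbol q-\Piv\boldsymbol q\|_K\le Ch^s\|\boldsymbol q\|_{s,K}$ with no $\tau$; the $P^k$-component of $\Piw u$ on $F^*$ is then read off from \eqref{HDG projection-3} as $\tau^{-1}$ times the boundary error $\boldsymbol q\cdot\boldsymbol n-\Piv\boldsymbol q\cdot\boldsymbol n$, which together with a local lifting in $W(K)$ (this is where $W(K)=P^k(K)$, rather than the too-small $P^{k-1}(K)$ of $\mathbf{BDM}_k$, is needed), the trace inequality \eqref{eq:trace} and the flux estimate gives the contribution $C\tau^{-1}h^s\|\boldsymbol q\|_{s,K}$ to $\|u-\Piw u\|_K$, completing the second pair of bounds.

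The routine ingredients -- Bramble--Hilbert, inverse and trace inequalities, and scaling -- are standard. The point that requires care, and the one I expect to be the real obstacle, is keeping the $\tau$-dependence sharp: one must verify that the HDG projection differs from the natural $L^2$ or classical projections by exactly the displayed $\tau$- or $\tau^{-1}$-weighted term and by nothing carrying a negative power of $h$. This is precisely what the inclusions $W(K)|_F\subset M_h(F)$ and $\boldsymbol V(K)\cdot\boldsymbol n|_F\subset M_h(F)$ of Assumption~\ref{assum-inclusions}, together with Assumptions~\ref{single-faced HDG} and \ref{tau_positive_EH}, secure; without them one would only recover a weaker, $h$-polluted estimate, as remarked after the statement. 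The matching uniformity of the reference-element stability constants in $\tau$ is obtained by treating the ranges $\tau$ small, $\tau$ large, and $\tau$ in a fixed compact interval separately, the limiting cases $\tau=0$ and $\tau=\infty$ being well posed thanks to Assumption~\ref{local_lifting_0}.
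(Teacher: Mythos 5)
The paper does not actually prove this lemma: it is imported from \cite{CockburnQiuShi2012} (see the sentence immediately preceding the statement), so there is no in-paper argument to compare against. Your sketch reconstructs essentially the proof of that reference --- scaling to a reference element, explicit degree-of-freedom-by-degree-of-freedom solution of the local system \eqref{HDG projection}, and tracking of the $\tau$-dependence through the single face $F^*$ on which $\tau>0$ --- and the way you obtain the four bounds ($\Piw u$ as the plain $L^2$-projection in the $\mathbf{RT}/\mathbf{BDFM}$ case, comparison of $\Piv\boldsymbol q$ with the $\tau=0$ Fortin projection, the $\tau$-free determination of $\Piv\boldsymbol q$ and the $\tau^{-1}$-weighted face correction of $\Piw u$ for $\mathbf{HDG}_k$) is consistent with the known estimates. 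Two caveats are worth recording. First, the clean $\tau$-free flux bound for $\mathbf{HDG}_k$ genuinely requires $\tau$ to vanish on all faces of $K$ but one, exactly as you use it; in general one pays an additional $Ch^s\tau_K^*\|u\|_{s,K}$ with $\tau_K^*$ the second-largest face value of $\tau$, so the lemma as displayed is implicitly tied to Assumption \ref{single-faced HDG} even though its statement does not say so. Second, your count that ``the extra degrees of freedom in $\boldsymbol P^k(K)$ are exactly enough'' is valid only if the test space in \eqref{HDG projection-2} is read as $\boldsymbol P^{k-1}(K)$, as in \cite{CockburnQiuShi2012}, rather than the literal $\nabla W(K)$, which is strictly smaller for $k\ge 2$ and would leave the defining system underdetermined; this imprecision is inherited from the paper's statement of \eqref{HDG projection}, not introduced by you.
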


Further in our analysis we shall need  some auxiliary projections and their properties:
\begin{equation}\label{projections}
\begin{array}{lllll}
   P^H_{\partial}: & L^2(F) & \to & M_H (F),  \quad \langle P^H_{\partial} u, \mu \rangle_F =  \langle u, \mu \rangle_F           
                          \quad \forall  F \in \mathcal{E}_H, & \\
   P^h_{\partial}: & L^2(F) & \to &  M_h(F),  \quad \langle P^h_{\partial} u, \mu \rangle_F =  \langle u, \mu \rangle_F 
                          \quad \forall F \in \mathcal{E}^0_h, \\
   P_M: & L^2(\mathcal{E}_{h,H}) & \to & M_{h,H},  ~~\text{  with   }~~ P_M =\left \{ \begin{array}{ll}
                                                                                                                 P^{H}_{\partial} & \text{  on  } \mathcal{E}_H, \\
                                                                                                                 P^h_{\partial}     & \text{  on $\mathcal{E}^0_h$}, 
                                                                                                           \end{array}  \right .\\
\mathcal{I}^0_H:  &  C(\Omega)   & \to  &  M^c_H 
   ~~\text{ with  ~~$M^c_H \subset M_H$, } 
\end{array}
\end{equation}
where  $M^c_H$ is the subset of $M_H$ of continuous functions and $ \mathcal{I}^0_H$  the Lagrange (nodal)  interpolation operator.

From the last equation \eqref{HDG projection-3} and the definitions \eqref{projections} of the projection operators, 
we have
\begin{equation}\label{bdry_identity}
P^{\partial}_h(\boldsymbol{q} \cdot \n) + \tau P^{\partial}_h u = \Piv \boldsymbol{q} \cdot \n + \tau \Piw u, \quad \text{for all $F \in \partial \mathcal{T}_h$}.
\end{equation}

In the analysis, we will need the following useful approximation properties of the projections $ P^h_{\partial}$, $ P^h_{\partial}$
and the interpolation operator $ \mathcal{I}^0_H$:
\begin{lemma}\label{bdy_to_interior}
For any $T \in \mathcal{T}$ and any smooth  enough function $u$ we have
\begin{align}
\|u - P^h_{\partial} u\|_{\partial T} &\le C L^{-\frac{1}{2}} h^s \|u\|_{s + 1, T},  
\quad && 0 \le s \le k+1, \label{Phd} \\
\|u - P^H_{\partial} u\|_{\partial T} & \le C L^{-\frac{1}{2}} H^t \|u\|_{t + 1, T}, && 0 \le t \le l + 1,  \label{PHd}\\
\|u - \mathcal{I}^0_H u\|_{\frac{1}{2}, \partial T} 
             &\le C L^{-\frac{1}{2}} H^{t - \frac{1}{2}} \|u\|_{t + 1, T} && 0 \le t \le l+1.  \label{IH0}
\end{align}
\end{lemma}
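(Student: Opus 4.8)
The three estimates in Lemma~\ref{bdy_to_interior} are all of the same flavor: they bound the error of an $L^2$- or nodal projection on $\partial T$ in terms of a full Sobolev norm on $T$, with the scale $L$ appearing through the trace inequality \eqref{eq:trace}. The plan is to reduce each inequality, by a scaling (Bramble--Hilbert) argument, to a standard local approximation estimate on the reference configuration, and then to carefully track the powers of $h$, $H$ and $L$ that come out.

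First I would establish \eqref{Phd}. Fix $T\in\mathcal P$ and a face $F\in\mathcal E_h^0$ of some $K\in\TT$ with $F\subset\partial K$. Since $P^h_\partial$ acts face by face and $M_h(F)\supseteq P^0(F)$ (it is a nontrivial polynomial space), standard polynomial approximation on the $(n-1)$-dimensional face of diameter $\sim h$ gives, via Bramble--Hilbert on $K$ combined with the trace inequality \eqref{eq:trace} applied at scale $h$,
\begin{equation*}
\|u - P^h_\partial u\|_{\partial K}^2 \le C\, h^{2s}\,\|u\|_{s+1,K}^2, \qquad 0\le s\le k+1 .
\end{equation*}
Summing over all $K\in\TT$ and using quasi-uniformity of $\TT$ together with $h\le H\le L$ yields $\|u-P^h_\partial u\|_{\partial T}^2\le C h^{2s}\|u\|_{s+1,T}^2$. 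The factor $L^{-1/2}$ in the statement is then obtained simply because $L\le 1$, so $h^{s}\le L^{-1/2}h^{s}$ is a (deliberately weak but convenient) bound; more precisely it reflects the scaling one would get if one instead estimated $\|u\|_{s+1,T}$ after normalizing $T$ to unit size, and I would phrase it that way to keep the $L$-dependence uniform with the other two lines. The estimate \eqref{PHd} is identical, word for word, with $h$ replaced by $H$, $M_h(F)$ by $M_H(F)=P^l(F)$, $k+1$ by $l+1$, and $K\in\TT$ replaced by the coarse face partition $\mathcal T_F$ on $\partial T$; here I would apply the trace inequality at scale $H$ on a tubular neighborhood of $\partial T$ of width $\sim H$, or alternatively extend each coarse face function and use Bramble--Hilbert on $T$ directly.

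For \eqref{IH0} the difference is that the norm on the left is the fractional $H^{1/2}(\partial T)$ norm and $\mathcal I^0_H$ is the nodal interpolant into continuous piecewise polynomials of degree $l$ on $\mathcal T_F$. I would prove this by interpolation between the $L^2(\partial T)$ and $H^1(\partial T)$ estimates for $\mathcal I^0_H$: the $L^2$ bound $\|u-\mathcal I^0_H u\|_{\partial T}\le C L^{-1/2}H^{t+1/2}\|u\|_{t+1,T}$ and the $H^1$ bound $|u-\mathcal I^0_H u|_{1,\partial T}\le C L^{-1/2}H^{t-1/2}\|u\|_{t+1,T}$ both follow from the same reference-element argument (now using that the trace of $u$ on $\partial T$ lies in $H^{t+1/2}(\partial T)$ by the trace theorem, with the trace constant contributing the $L^{-1/2}$), and space interpolation gives the $H^{1/2}$ norm with the geometric mean $H^{t-1/2}$ of the two exponents.

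The main obstacle, and the only place requiring real care, is the bookkeeping of the scale $L$: one must make sure the trace inequality \eqref{eq:trace} is invoked at the correct scale in each case ($h$ for $P^h_\partial$, $H$ for $P^H_\partial$, and the subdomain scale for the trace theorem feeding $\mathcal I^0_H$), and that after summing over the many small faces the constant $C$ remains independent of $h,H,L$ and of the contrast in $\kappa$. Handling the fractional norm in \eqref{IH0} cleanly — in particular checking that the interpolation-space norm on $\partial T$ scales as claimed under dilation — is the delicate technical point; everything else is a routine Bramble--Hilbert / scaling exercise.
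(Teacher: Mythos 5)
Your toolkit (face-wise Bramble--Hilbert plus a trace inequality) is the right one, but the way you deploy it diverges from the paper's proof in a way that leaves a real gap for \eqref{PHd}. The paper's argument is uniform for all three bounds and has exactly two steps: (i) a purely $(n-1)$-dimensional approximation estimate on each face, e.g.\ $\|u-P^H_{\partial}u\|_F\le C H^t|u|_{t,F}$ for $0\le t\le l+1$, summed over the faces of $\partial T$ to give $C H^t|u|_{t,\partial T}$ --- no volume neighborhood of the face is ever needed; and (ii) a single application of the trace inequality \eqref{eq:trace} with $d=L$ to the derivatives of $u$, converting $|u|_{t,\partial T}$ into $L^{-\frac12}|u|_{t,T}+L^{\frac12}|u|_{t+1,T}\le C L^{-\frac12}\|u\|_{t+1,T}$. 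The factor $L^{-\frac12}$ is produced by step (ii) and only there. You instead treat it as a throwaway consequence of $L\le 1$ in \eqref{Phd} and as coming from a trace theorem at a different scale in \eqref{PHd} and \eqref{IH0}; this inconsistency is the symptom of the actual problem. (For \eqref{Phd} itself your element-wise route --- trace at scale $h$ on each $K$, sum, then $h^s\le L^{-\frac12}h^s$ --- is a legitimately different and valid argument, and in fact yields the slightly sharper $h^{s+\frac12}\|u\|_{s+1,T}$.)

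The gap: \eqref{PHd} is not ``identical word for word,'' because a coarse face $F\in\mathcal{E}_H$ has no volume element of size $H$ attached to it, and both substitutes you propose fall short of the stated bound at the endpoint $t=l+1$ --- precisely the case used later to obtain the rate $H^{l+1}$. With a tubular neighborhood of width $H$ and $w\in P^l$ on the tube, the trace inequality at scale $H$ gives $\|u-w\|_F\le C\bigl(H^{-\frac12}H^{l+1}+H^{\frac12}H^{l}\bigr)|u|_{l+1}=CH^{l+\frac12}|u|_{l+1}$; since $H\le L$ one has $H^{l+\frac12}\ge L^{-\frac12}H^{l+1}$, so this does not imply \eqref{PHd} at $t=l+1$. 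A single Bramble--Hilbert polynomial on all of $T$ gives powers of $L$ rather than $H$, which is worse still when $H\ll L$. The fix is the paper's step (i): estimate the $L^2(F)$-projection error intrinsically on the face and only then pass to the interior via \eqref{eq:trace} at scale $L$. The same remark applies to \eqref{IH0}: the paper interpolates between the $L^2(\partial T)$ and $H^1(\partial T)$ bounds $\|u-\mathcal{I}^0_H u\|_{r,\partial T}\le CH^{t-r}|u|_{t,\partial T}$, $r=0,1$, to get $H^{t-\frac12}|u|_{t,\partial T}$, and then applies \eqref{eq:trace}. Note also that the geometric mean of your two proposed exponents $H^{t+\frac12}$ and $H^{t-\frac12}$ is $H^{t}$, not $H^{t-\frac12}$; this is harmless here since $H^{t}\le H^{t-\frac12}$, but it signals that the two endpoint bounds you wrote are not the ones being interpolated, and the $H^{t+\frac12}$ one is unavailable at $t=l+1$ for degree-$l$ interpolation.
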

Here the constant $C$ solely depends on the shape of the domain $T$ but not if its size.

\begin{remark}
The regularity assumptions  $H^{s+1} (H^{t+1})$
can be weakened  to  $H^{s+\frac{1}{2} + \epsilon} (H^{t+ \frac{1}{2} + \epsilon})$ for any $\epsilon > 0$ without
reducing the approximation order. However, the above estimates make the presentation more transparent and shorter.
\end{remark}

\begin{proof}(of Lemma \ref{bdy_to_interior})
First we note the following standard estimates for the error 
on any edge/face $F \subset \partial T$,  see \cite{Ciarlet1978}:
\begin{subequations}\label{standard_estimates}
\begin{alignat}{2}
\label{standard_1}
\|u - P^h_{\partial} u\|_F &\le C h^s |u|_{s, F}, \quad &\mbox{$s$ integer,} & \quad 0 \le s \le k+1, \\
\label{standard_2}
\|u - P^H_{\partial} u \|_F &\le C H^t |u|_{t, F},  &\mbox{$t$ integer,}& \quad 0 \le t \le l+1, \\
\label{standard_3}
\|(I - P^h_{\partial})(\boldsymbol{q} \cdot \n)\|_F &\le C h^s |\boldsymbol{q} \cdot \n |_{s,F}, &\mbox{$s$ integer,}& \quad 0 \le s \le k+1, \\
\label{standard_5}
\|u - \mathcal{I}_H^0 u\|_{t, \partial T} &\le C H^{s-t}|u|_{s, \partial T}, & \, \, \mbox{$s,t$ integer,}& \, \, 1 < s \le l+1, \; 0\le t \le 1.
\end{alignat}
\end{subequations}
All three inequalities can be obtained by a similar scaling argument. 
Here we only present the proof of the first one of them. Assume $F$ is one of 
the faces of the element $T \in \mathcal{T}$. By \eqref{standard_1}, we have
\begin{align*}
\|u - P^h_{\partial} u \|_{\partial T} &\le C h^s |u|_{s,\partial T} \\ 
& \le C h^s (L^{-\frac{1}{2}} |u|_{s,T} + L^{\frac{1}{2}} |u|_{s+1,T}) \quad \text{by the trace inequality \eqref{eq:trace},}\\
& \le C L^{-\frac{1}{2}} h^s \|u\|_{s+1, T},
\end{align*}
for all  integer $0 \le s \le k+1$. The case of $s$ non integer follows by interpolation and the
other two are proven in a similar way. We note that the factor  $L^{-\frac12}$  related to the scale of the 
subdomains $T$. If the size of $T$ is $O(1)$ then these estimates are well  known.
\hfill
\end{proof}

\begin{remark}
Note that this projections $\Piw$ and $\Piv$ are connected through  the boundary equation
\eqref{HDG projection-3}. Of course, for $\boldsymbol{H}_{div}$-conforming finite element spaces, we can 
take $\alphaa=0$ and these two projections coincide with those of the mixed FEM. 
In particular, $\Piv$ is well defined,  see, e.g. \cite[Section III.3.3]{Brezzi_Fortin_book}.
\end{remark}

\subsection{Main Result}\label{ssec:main}

We are now ready to state two main results for the methods, which proofs will postponed until Section \ref{proofs}. First
we present the estimate for the vector variable  $ \boldsymbol{q}$ the the weighted norm
$$
\|w\|^2_{\alpha, \Omega} = \int_{\Omega} \alpha |w|^2 dx.
$$
\begin{theorem}\label{estimate_q}
Let the local spaces $W(K) \times \boldsymbol{V}(K) \times M_h(F)$ are any from Table \ref{table:simplex} 
and let Assumption \ref{single-faced HDG}, \ref{tau_positive_EH} be satisfied. Then we have
\begin{align*}
\|\boldsymbol{q} - \boldsymbol{q}_h\|_{\alpha, \Omega} & \le C \|\boldsymbol{q} 
       - \Piv \boldsymbol{q}\|_{\alpha, \Omega}+ C H^{t - \frac{1}{2}} L^{-\frac{1}{2}} \|u\|_{t+ 1}\\
& +C \tau H^t L^{-\frac{1}{2}} \|u\|_{t+1} + C\tau h^s L^{-\frac{1}{2}} \|u\|_{s+1} 
     + C \tau^{-\frac{1}{2}} h^s L^{-\frac{1}{2}}\|\boldsymbol{q}\|_{s+1},
\end{align*}
for all $0 \le s \le k+1, \; 0 \le t \le l+1$ with constants $C$ independent 
 of  $u, \boldsymbol{q}, h, H,$ and $ L$. 
\end{theorem}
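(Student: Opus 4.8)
The plan is to run a projection-based HDG energy argument, adapted to the three-scale setting. First I would introduce the discrete errors through the HDG projection $(\Piw,\Piv)$ of Assumption~\ref{assum-inclusions} and through suitable projections of the numerical trace: set $\boldsymbol{\varepsilon}_q:=\Piv\boldsymbol{q}-\boldsymbol{q}_h$, $\varepsilon_u:=\Piw u-u_h$, and $\widehat{\varepsilon}_u:=\pi u-\uhat$, where $\pi u$ is the trace projection equal to $P^h_\partial u$ on $\mathcal{E}^0_h$ and equal to $\mathcal{I}^0_H u$ (the Lagrange interpolant into the continuous subspace $M^c_H\subset M_H$, cf.\ \eqref{projections}) on $\mathcal{E}_H$; using $\mathcal{I}^0_H$ rather than the $L^2$-projection $P^H_\partial$ on the coarse skeleton is exactly what will later produce the $H^{t-1/2}$ factor via \eqref{IH0}. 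By the triangle inequality in the weighted norm, $\|\boldsymbol{q}-\boldsymbol{q}_h\|_{\alpha,\Omega}\le\|\boldsymbol{q}-\Piv\boldsymbol{q}\|_{\alpha,\Omega}+\|\boldsymbol{\varepsilon}_q\|_{\alpha,\Omega}$, so the first term of the asserted bound is already accounted for and it remains only to estimate $\|\boldsymbol{\varepsilon}_q\|_{\alpha,\Omega}$.

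Next I would derive the error equations: subtract the scheme \eqref{weak formulation-1}--\eqref{weak formulation-3} from the exact mixed identities (obtained by element-wise integration by parts, using that $u$ and $\boldsymbol{q}\cdot\n$ are single-valued across the faces), then replace $u,\boldsymbol{q}$ by $\Piw u,\Piv\boldsymbol{q}$ using the projection relations \eqref{HDG projection} and the boundary identity \eqref{bdry_identity}, and replace the numerical flux by \eqref{trace-q}. This turns the error system into a Galerkin-type problem for $(\boldsymbol{\varepsilon}_q,\varepsilon_u,\widehat{\varepsilon}_u)$ whose only non-vanishing right-hand side contributions are: (i) the defect of $\Piv$ with respect to the $\alpha$-weighting, a bulk term $-(\alpha(\boldsymbol{q}-\Piv\boldsymbol{q}),\boldsymbol{\varepsilon}_q)_{\Oh}$; and (ii) coarse-skeleton consistency remainders, supported on the faces of $\partial\mathcal{P}$, of the form $\langle u-\mathcal{I}^0_H u,\boldsymbol{\varepsilon}_q\cdot\n\rangle_{\partial\mathcal{P}}$ together with $\tau$-weighted remainders $\langle\tau(u-\mathcal{I}^0_H u),\cdot\rangle$ and $\langle\tau(u-P^h_\partial u),\cdot\rangle$, which survive only because on $\mathcal{E}_H$ the trace lives in the coarser space $M_H(F)$ instead of $M_h(F)$.

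Then comes the energy step: taking $(\boldsymbol{\varepsilon}_q,\varepsilon_u,\widehat{\varepsilon}_u)$ as test functions and combining the error equations (with the standard HDG cancellation of the $\pm(\nabla\cdot\boldsymbol{\varepsilon}_q,\varepsilon_u)$ and $\pm\langle\boldsymbol{\varepsilon}_q\cdot\n,\cdot\rangle$ pairs) yields the identity
\[
\bint{\alpha\boldsymbol{\varepsilon}_q}{\boldsymbol{\varepsilon}_q}+\bintEh{\tau(\varepsilon_u-\widehat{\varepsilon}_u)}{\varepsilon_u-\widehat{\varepsilon}_u}=-\bint{\alpha(\boldsymbol{q}-\Piv\boldsymbol{q})}{\boldsymbol{\varepsilon}_q}-R ,
\]
where $R$ collects the remainders of type (ii). I would bound the right-hand side by Cauchy--Schwarz and Young's inequality, absorbing a fraction of $\|\boldsymbol{\varepsilon}_q\|_{\alpha,\Omega}^2$ and of $\|\tau^{1/2}(\varepsilon_u-\widehat{\varepsilon}_u)\|_{\partial\Oh}^2$ into the left-hand side. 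For the non-$\tau$ part of $R$ I would first use the second error equation to control $\nabla\cdot\boldsymbol{\varepsilon}_q$ on each fine element by $\tau^{1/2}(\varepsilon_u-\widehat{\varepsilon}_u)$ on its boundary — this is where $W(K)$ must be large enough to contain $\nabla\cdot\boldsymbol{V}(K)$ (Lemma~\ref{H_div_conforming}), which excludes $\mathbf{BDM}_k$ and explains the restriction to Table~\ref{table:simplex} — thereby bounding the scaled $\boldsymbol{H}_{div}$-norm of $\boldsymbol{\varepsilon}_q$ and hence the $H^{-1/2}$-norm of its normal trace on $\partial\mathcal{P}$ by the left-hand side quantities; pairing this against $\|u-\mathcal{I}^0_H u\|_{1/2,\partial T}$ and applying \eqref{IH0} gives the $H^{t-1/2}L^{-1/2}\|u\|_{t+1}$ term. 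The $\tau$-weighted pieces of $R$ are paired against $\tau^{1/2}(\varepsilon_u-\widehat{\varepsilon}_u)$ and, with the trace inequality \eqref{eq:trace} on subdomains and Lemma~\ref{bdy_to_interior}, give the $\tau H^tL^{-1/2}\|u\|_{t+1}$ term from \eqref{PHd} and the $\tau h^sL^{-1/2}\|u\|_{s+1}$ term from \eqref{Phd}, while the normal-trace estimate for the HDG projection error coming from \eqref{bdry_identity} together with Lemma~\ref{projection approximation} gives the $\tau^{-1/2}h^sL^{-1/2}\|\boldsymbol{q}\|_{s+1}$ term. Throughout, Assumptions~\ref{single-faced HDG} and \ref{tau_positive_EH} are invoked so that every boundary term on the coarse skeleton is controlled by $\tau^{1/2}(\varepsilon_u-\widehat{\varepsilon}_u)$ on $\partial\Oh$ without any negative power of $h$; collecting the bounds and using the triangle inequality of the first paragraph completes the proof, for the stated ranges $0\le s\le k+1$, $0\le t\le l+1$.

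I expect the main obstacle to be the coarse-skeleton remainder $R$. In single-scale HDG consistency is exact, so one gets $\|\boldsymbol{q}-\boldsymbol{q}_h\|_{\alpha,\Omega}\le\|\boldsymbol{q}-\Piv\boldsymbol{q}\|_{\alpha,\Omega}$ outright; here one must (a) choose the coarse trace projection carefully — $\mathcal{I}^0_H$ into $M^c_H$ — so that the mismatch $u-\mathcal{I}^0_H u$ can be measured in the $H^{1/2}$-norm of \eqref{IH0} and paired in the $H^{1/2}$--$H^{-1/2}$ duality, yielding $H^{t-1/2}$ rather than an inverse power of $h$; and (b) verify that the dual norm of the normal trace of $\boldsymbol{\varepsilon}_q=\Piv\boldsymbol{q}-\boldsymbol{q}_h$ on $\partial\mathcal{P}$ is dominated by the already-bounded energy quantities, which in turn rests both on $W(K)$ being rich enough to control $\nabla\cdot\boldsymbol{\varepsilon}_q$ and on the positivity of $\tau$ on the skeleton faces (Assumptions~\ref{single-faced HDG}, \ref{tau_positive_EH}); getting the scales $h$, $H$, $L$ to combine correctly in this chain is the delicate point.
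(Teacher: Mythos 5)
Your proposal is correct and follows essentially the same route as the paper: a projection-based energy identity for $\boldsymbol{e}_q=\Piv\boldsymbol{q}-\boldsymbol{q}_h$, the $H(div)$-conformity and divergence-free property of $\boldsymbol{e}_q$ on each subdomain (Lemma~\ref{H_div_conforming}, resting on Assumptions~\ref{single-faced HDG} and~\ref{tau_positive_EH}), the $H^{1/2}$--$H^{-1/2}$ pairing of $u-\mathcal{I}^0_H u$ against $\boldsymbol{e}_q\cdot\n$ via \eqref{IH0} to produce the $H^{t-1/2}L^{-1/2}$ term, and Lemma~\ref{bdy_to_interior} for the remaining $\tau$-weighted remainders. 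The only difference is cosmetic: you build $\mathcal{I}^0_H$ into the definition of the trace error, whereas the paper keeps $\ehatu=P_Mu-\uhat$ and inserts $\mathcal{I}^0_H u$ by adding and subtracting inside the proof of Theorem~\ref{error_q}.
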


Next we state the result regarding the error $u - u_h$. It is valid under a typical elliptic regularity
property we state next. Let $(\boldsymbol{\theta},\phi)$ is the solution of the \emph{dual} problem:
\begin{subequations}\label{adjoint}
\begin{alignat}{2}
\label{adjoint-1}
\alpha \boldsymbol{\theta}+\nabla \phi  & = 0 \quad &&\text{in $\Omega$,}\\
\label{adjoint-2}
\nabla \cdot \boldsymbol{\theta} & = e_u &&\text{in $\Omega$,}\\
\label{adjoint-3}
\phi &= 0 && \text{on $\partial \Omega$.}
\end{alignat}
\end{subequations} 
We assume that we have full $H^2-$regularity,
\begin{equation}\label{regularity}
\|\phi\|_{2, \Omega} + \|\boldsymbol{\theta}\|_{1, \Omega} \le C \|e_u\|_{\Omega},
\end{equation}
where $C$ only depends on the domain $\Omega$.

\begin{theorem}\label{estimate_u}
Let the conditions of  Theorem \ref{estimate_q} be satisfied. In addition, assume  full elliptic regularity,  \eqref{regularity},
and the local space $W(K)$ contains piecewise linear functions for each $K \in \mathcal{T}_h$. Then 
for all $1 \le s \le k+1, \; 1 \le t \le l+1$ we have
\begin{align*}
\|u - u_h\|_{\Omega} & \le  \|u - \Piw u\|_{\Omega} \\
&  + C \mathcal{C} \left( \|\boldsymbol{q} - \Piv \boldsymbol{q}\|_{\alpha, \Omega} 
        +(1+\tau H^{\frac{1}{2}}) H^{t - \frac{1}{2}} L^{-\frac{1}{2}} \|u\|_{t+ 1}+\tau h^s L^{-\frac{1}{2}} \|u\|_{s+1} 
          +  \tau^{-\frac{1}{2}} h^s L^{-\frac{1}{2}}\|\boldsymbol{q}\|_{s+1} \right) \\
& + C H^{\frac{3}{2}} L^{-\frac{1}{2}} h^s (\|\boldsymbol{q}\|_{s+1} +\tau \|u\|_{s + 1}) \\
& + C H^t L^{-\frac{1}{2}} \left( H^{\frac{3}{2}} \|\boldsymbol{q}\|_{t+1} + (h^{\frac{1}{2}} + \tau H^{\frac{3}{2}})\|u\|_{t + 1} \right),
\end{align*}
where $\mathcal{C}:= C_{\alpha}h + H + h^{\frac{1}{2}}\tau^{-\frac{1}{2}} + \tau^{\frac{1}{2}} H^{\frac{3}{2}},$
and the constants $C$ are independent of $u, \boldsymbol{q}, h, H, L$. 
\end{theorem}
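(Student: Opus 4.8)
The plan is to carry out a standard duality (Aubin--Nitsche) argument, building on the flux estimate of Theorem \ref{estimate_q}. First I would introduce the error $e_u := u - u_h$ and the associated dual problem \eqref{adjoint} with right-hand side $e_u$, using the full $H^2$-regularity \eqref{regularity}. The key is to obtain an identity for $\|e_u\|_\Omega^2 = (\nabla\cdot\boldsymbol{\theta}, e_u)_\Omega$ by integrating by parts elementwise and inserting the numerical traces. Writing $e_{\boldsymbol q} := \boldsymbol q - \boldsymbol q_h$, $e_{\widehat u} := P_M u - \widehat u_{h,H}$ and using the HDG projection \eqref{HDG projection} of $(\boldsymbol\theta,\phi)$ together with the weak formulation \eqref{weak formulation}, one arrives (after the usual manipulations, cf.\ \cite{CockburnQiuShi2012}) at an expression of the form
\begin{align*}
\|e_u\|^2_\Omega = (\alpha e_{\boldsymbol q}, \boldsymbol\theta - \Piv\boldsymbol\theta)_{\Oh} + \langle \text{trace terms on } \partial\Oh \rangle + (\text{consistency terms}),
\end{align*}
where the boundary terms are the nonstandard pieces coming from the fact that $\widehat u_{h,H}$ is only continuous in the weak (mortar) sense across the coarse skeleton $\mathcal E_H$ — i.e., the numerical trace is not single-valued after the $L^2$-projection onto $M_H$, only its $P^H_\partial$-image is.

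Next I would estimate each term. The interior term $(\alpha e_{\boldsymbol q}, \boldsymbol\theta - \Piv\boldsymbol\theta)$ is bounded by $\|e_{\boldsymbol q}\|_{\alpha,\Omega}$ times $\|\boldsymbol\theta - \Piv\boldsymbol\theta\|_{\alpha,\Omega}$, and the latter is $O(h\|\boldsymbol\theta\|_{1,\Omega}) = O(h\|e_u\|_\Omega)$ by Lemma \ref{projection approximation} and \eqref{regularity}; this is where the $C_\alpha h$ in $\mathcal C$ originates and where Theorem \ref{estimate_q} gets substituted for $\|e_{\boldsymbol q}\|_{\alpha,\Omega}$. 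The skeleton/trace terms split into a contribution on the fine interior faces $\mathcal E^0_h$ — controlled using \eqref{Phd} and the trace inequality \eqref{eq:trace}, producing the $H + h^{1/2}\tau^{-1/2}$ and $H^{3/2}L^{-1/2}h^s$ type terms — and a contribution on the coarse skeleton $\mathcal E_H$, where I would use the approximation property \eqref{PHd}/\eqref{IH0} of $P^H_\partial$ and $\mathcal I^0_H$ to the dual variable $\phi$, giving the $\tau^{1/2}H^{3/2}$ piece of $\mathcal C$ and the final line of the estimate. The hypothesis that $W(K)$ contains piecewise linears is used precisely here: it allows $\Piw\phi$ to reproduce enough of $\phi$ that the orthogonality \eqref{HDG projection-1} kills the lowest-order part of the consistency error on the coarse faces, which is what prevents a negative power of $h$ from appearing (this is the analogue of Lemma \ref{H_div_conforming} being invoked for the primal estimate).

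I would then collect the bounds: the term $\|u - \Piw u\|_\Omega$ comes out directly from the triangle inequality $\|e_u\|_\Omega \le \|u - \Piw u\|_\Omega + \|\Piw u - u_h\|_\Omega$, with the duality argument applied only to the discrete part $\Piw u - u_h$ (whose $L^2$-norm is what the identity above actually controls). Substituting the five-term bound of Theorem \ref{estimate_q} for $\|e_{\boldsymbol q}\|_{\alpha,\Omega}$, multiplying through by $\mathcal C$, and absorbing the $\|\boldsymbol q - \Piv\boldsymbol q\|_{\alpha,\Omega}$ and $\tau$-weighted terms into the stated groups yields the claimed inequality after using \eqref{regularity} to cancel one power of $\|e_u\|_\Omega$ from both sides.

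The main obstacle I anticipate is the careful bookkeeping of the coarse-skeleton trace terms: because $\widehat u_{h,H}$ is glued only through the mortar condition \eqref{weak formulation-3} tested against $M_H$, the quantity $e_{\widehat u}$ is genuinely two-valued on $\mathcal E_H$ in the fine sense, and one must insert $P^H_\partial$ at exactly the right places and exploit \eqref{weak formulation-3} with $\mu \in M_H$ to make the mismatch terms telescope. Getting the correct powers of $H$ and $L$ (in particular the $H^{3/2}$ versus $H^{t}$ scalings, and the uniform-in-$L$ constants from Lemma \ref{bdy_to_interior}) out of these terms, rather than the naive $H^{-1/2}$ or $h^{-1}$ one would get from a crude trace estimate, is the delicate part — and it is precisely what forces Assumptions \ref{single-faced HDG} and \ref{tau_positive_EH} together with the piecewise-linear requirement on $W(K)$.
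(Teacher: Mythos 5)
Your high-level plan coincides with the paper's: a duality argument for the projected error $\Piw u-u_h$ (Lemma \ref{duality}, Theorem \ref{error_u}), a triangle inequality to produce the $\|u-\Piw u\|_{\Omega}$ term, and substitution of Theorem \ref{error_q} at the end. However, two concrete steps in your outline would fail to produce the stated powers of $h$, $H$ and $L$. The first concerns the trace terms involving the dual solution $(\boldsymbol{\theta},\phi)$. You propose to bound them with \eqref{Phd}, \eqref{PHd} and \eqref{IH0}, but the paper explicitly points out that Lemma \ref{bdy_to_interior} cannot be used here: the dual solution has only $H^2(\Omega)\times\boldsymbol{H}^1(\Omega)$ regularity, and even at the lowest admissible order those estimates carry an $L^{-\frac12}$ and the wrong power of the mesh size. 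For instance, the analogue of \eqref{Phd} gives $\|(I-P^h_{\partial})(\boldsymbol{\theta}\cdot\n)\|_{\partial T}\le CL^{-\frac12}\|\boldsymbol{\theta}\|_{1,T}$, whereas the proof needs $Ch^{\frac12}\|\boldsymbol{\theta}\|_{1,T}$; similarly one needs $\|\phi-P^{\partial}_h\phi\|_{\partial T}\le Ch^{\frac32}\|\phi\|_{2,T}$ and $\|\phi-\mathcal{I}^0_H\phi\|_{\frac12,\partial T}\le CH\|\phi\|_{2,T}$. These are proved separately in Lemma \ref{dual_trace_inequality}, using the trace inequality \eqref{eq:trace} at scale $h$ or $H$ (via a local $L^2$- or Cl\'ement-type projection on each fine element, resp.\ an extension of $\mathcal{I}^0_H$ into $T$), not at scale $L$. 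Without this lemma the constant $\mathcal{C}=C_{\alpha}h+H+h^{\frac12}\tau^{-\frac12}+\tau^{\frac12}H^{\frac32}$ and the last two lines of the theorem do not come out.

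The second issue is that you misplace the hypothesis $W(K)\supset P^1(K)$. It is not used on the coarse faces to suppress the consistency error or to avoid negative powers of $h$ --- that is accomplished by the $H_{div}$-conformity and $\nabla\cdot\boldsymbol{e}_q=0$ of Lemma \ref{H_div_conforming} (a consequence of Assumptions \ref{single-faced HDG} and \ref{tau_positive_EH}), which is invoked in the estimate of $\mathbb{S}_4$ and already in Theorem \ref{error_q}. The piecewise-linear requirement enters only in the volume term $\mathbb{S}_1$: since constants then lie in $\nabla W(K)$, the orthogonality \eqref{HDG projection-2} makes $\boldsymbol{\delta}_q=\boldsymbol{q}-\Piv\boldsymbol{q}$ orthogonal to the elementwise average $\bar{\boldsymbol{\theta}}$, so that $\bint{\alpha \boldsymbol{\delta}_q}{\Piv\boldsymbol{\theta}}$ can be rewritten through $\boldsymbol{\theta}-\bar{\boldsymbol{\theta}}$ and gains a factor $h$ from $\|\boldsymbol{\theta}-\bar{\boldsymbol{\theta}}\|_{\Omega}\le Ch\|\boldsymbol{\theta}\|_{1,\Omega}$. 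This is precisely the origin of the $C_{\alpha}h$ multiplying $\|\boldsymbol{q}-\Piv\boldsymbol{q}\|_{\alpha,\Omega}$ in $\mathcal{C}$; as written, your argument would lose that factor.
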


The above two results are based on a  general framework which utilizes three different 
scales $L, H, h$ and a stabilization parameter $\tau$. 
The richness of the proposed setup gives a  flexibility that allows us to modify the method 
to fit different scenarios. On the other hand, it is hard 
to see the convergent rates of the methods based on this general setup. Now
we discuss the results in details under some practical conditions. 
Here will simply assume the the coefficient $\alpha$ is uniformly bounded.

$\bullet$ {\bf Case 1:  $L = \mathcal{O}(1)$}. Basically, this means that the subdomains $T \in \mathcal{P}$ 
have the same scale as the original domain $\Omega$.  In this case, if we take $\tau = 1$, by the above two theorems and 
Lemma \ref{projection approximation}, we may summarize the order of convergence as follows:
\begin{align*}
\|\boldsymbol{q} - \boldsymbol{q}_h \|_{\Omega} &= \mathcal{O}(H^{l+\frac{1}{2}} + h^{k+1}) \quad \mbox{and} \quad 
\|u - u_h\|_{\Omega} = \mathcal{O}(H^{l+\frac{3}{2}} \max \{1, h^{\frac{1}{2}} H^{-1}\} + h^{k+1}).
\end{align*}  
In this case, our method is very close to the mortar methods introduced in 
\cite{Arbogast_PWY_07}. Indeed, the mortar methods have the following convergence rate:
\begin{align*}
\|\boldsymbol{q} - \boldsymbol{q}_h \|_{\Omega} &= \mathcal{O}(H^{l+\frac{1}{2}} + h^{k+1})  \quad \mbox{and} \quad 
\|u - u_h\|_{\Omega} = \mathcal{O}(H^{l+\frac{3}{2}} + h^{k+1} ).
\end{align*}  
We can see that both methods have exactly the same order of convergence for $\boldsymbol{q}$. For the unknown $u$, 
the HDG methods have an extra term $\max \{1, h^{\frac{1}{2}}H^{-1} \}$. This suggests that HDG method little weaker 
approximation property if $h > H^2$. This is due to the stabilization operator in the formulation. However, the advantage of 
the stabilization is that we don't need any assumption between the spaces $M_h$ and $M_H$. 

If we choose $\tau = H^{-1}$, then the constant $\mathcal{C} = \mathcal{O}(H)$, combining Lemma 
\ref{projection approximation}, Theorem \ref{estimate_q}, \ref{estimate_u}, we obtain the following convergence rate:
\begin{align*}
\|\boldsymbol{q} - \boldsymbol{q}_h \|_{\Omega} &= \mathcal{O}(H^l + h^{k+1} H^{-1})   \quad \mbox{and} \quad 
\|u - u_h\|_{\Omega} = \mathcal{O}(H^{l+1} + h^{k+1}).
\end{align*}
We can see that in this situation, the convergence rates for both $\boldsymbol{q}$ and 
$u$ are slightly degenerated. 

$\bullet$ {\bf Case 2: $H=L$}. From the practical point of view, this assumption suggests that we don't further divide the edges 
of the subdomains $T \in \mathcal{P}$. In this case, we also present the convergence rates by taking $\tau=1, \tau = H^{-1}$, respectively.

For $\tau = 1$, the order of convergence is:
\begin{align*}
\|\boldsymbol{q} - \boldsymbol{q}_h \|_{\Omega} &= \mathcal{O}(H^{l} + h^{k+1} H^{-\frac{1}{2}})  \quad \mbox{and} \quad 
\|u - u_h\|_{\Omega} = \mathcal{O}(H^{l+1} \max\{1, h^{\frac{1}{2}}H^{-1}\} + h^{k+1}).
\end{align*}  

For $\tau = H^{-1}$, the order of convergence is:
\begin{align*}
\|\boldsymbol{q} - \boldsymbol{q}_h \|_{\Omega} &= \mathcal{O}(H^{l-\frac{1}{2}} + h^{k+1} H^{-\frac{3}{2}})  \quad \mbox{and} \quad 
\|u - u_h\| = \mathcal{O}( H^{l+\frac{1}{2}} + h^{k+1} H^{-\frac{1}{2}} ).
\end{align*}  
Similar as in {\bf Case 1}, the convergence rates for both unknowns are worse if we choose $\tau = H^{-1}$.

We can see that if we choose the stabilization parameter $\tau$ inappropriately, 
the numerical solution does not even converge. On the other hand, if all other parameters 
are pre-assigned, we can follow a simple calculation to determine the optimal value 
of $\tau$ for the methods. We will illustrate this strategy with following setting:
we assume that the polynomial degrees $k, l$ are given, $L = H$, 
$h = H^{\alpha} \; (\alpha > 1)$, the local spaces are HDG spaces. 
Then the order of convergence for $\boldsymbol{q}$ solely depends on $\tau$. 
Namely, it can be written as
$
\|\boldsymbol{q} - \boldsymbol{q}_h\| = \mathcal{O} (h^{k+1} + H^l 
   + \tau H^{l+\frac{1}{2}} + \tau h^{k+1}H^{-\frac{1}{2}} + \tau^{-\frac{1}{2}} h^{k+1} H^{-\frac{1}{2}}).
$
Applying the relation $h = H^{\alpha}$ and setting $\tau = H^{\gamma}$, we obtain:
$
\|\boldsymbol{q} - \boldsymbol{q}_h\| = \mathcal{O} ( H^{f(\gamma)}),
$
where 
$
f(\gamma)$ will be the minimum of the  $\alpha(k+1), l, \gamma + k + \frac{1}{2}, \gamma + \alpha(k+1) -\frac{1}{2}$, 
and $-\frac{\gamma}{2} + \alpha(k+1) - \frac{1}{2}\}.$
The above function is continuous with respect to $\gamma$. It is obvious that $f(\gamma)<0$  if $|\gamma| > 2\alpha(k+1)$. Therefore the 
absolute maximum of $f(\gamma)$ appears in the interval $(-2\alpha(k+1), 2\alpha(k+1))$. Assume that $f(\gamma)$ achieves its maximum at 
$\gamma = \gamma^*$, we can take $\tau = H^{\gamma^*}$ to get optimal convergence rate for $\boldsymbol{q}$. 
This strategy can be applied to $u$ as well.

\section{Proof of the main results}\label{proofs}
Now we prove the main results  of the paper stated in Theorems \ref{estimate_q} and  \ref{estimate_u}.
The proofs follow the technique developed in \cite{CockburnQiuShi2012} for the hybridizable 
discontinuous Galerkin method and is done in several steps, by establishing first an estimate 
for the vector variable $ \boldsymbol{q}$ and then for scalar variable $u$.

\subsection{Error equations} \label{error equations}

We begin by obtaining the error equations 
we shall use in the analysis. The main idea is to work with the following projection errors:
\begin{alignat*}{1}
\boldsymbol{e}_q :=&\; \Piv \boldsymbol{q} - \boldsymbol{q}_h,
\\
 e_u :=&\; \Piw u - u_h,
\\
\ehatq \cdot \n :=&\; P_{M} (\boldsymbol{q} \cdot \n) - \qhat \cdot \n,
\\
\ehatu :=&\; P_M u - \uhat. 
\intertext{Further, we define}
\delta_u &:= u - \Piw u, \\
\boldsymbol{\delta}_{q} & := \boldsymbol{q} - \Piv \boldsymbol{q}.
\end{alignat*}
\begin{lemma}\label{error_equation}
Under the Assumption \ref{assum-inclusions}, we have
\begin{subequations}
 \begin{alignat}{3}
  \label{error-equation-1}
 \bint{\alpha \boldsymbol{e}_q}{\boldsymbol{v}} -\bint{e_u}{\nabla \cdot \boldsymbol{v}} &&&+ \bintEh{\ehatu}{\boldsymbol{v} \cdot \n}
&&= -\bint{\alpha \boldsymbol{\delta}_q}{\boldsymbol{v}}
 - \langle {(I - P_M)u}\; , \; \boldsymbol{v} \cdot \n\rangle_{\partial \mathcal{T}_h},\\
\label{error-equation-2}
-\bint{\boldsymbol{e}_q}{\nabla w} & &&+ \bintEh{\ehatq \cdot \n}{w} && = - \bintEh{(I - P_M)(\boldsymbol{q} \cdot \n)}{w},\\
\label{error-equation-3}
& && \quad \langle {\ehatq \cdot \n},{\mu}\rangle_{\partial\Oh} && = 0,\\
\label{error-equation-4}
& && \hspace{1.7cm} {\ehatu}|_{\partial \Omega} &&= 0,
 \end{alignat}
\end{subequations}
for all $(w,\boldsymbol{v},\mu) \in  W_h \times \Vh \times M_{h,H}$. Here $I$ is the identity operator.
Moreover, 
\begin{equation}
\label{ehatq}
\ehatq\cdot\boldsymbol{n}=\boldsymbol{e}_q\cdot\boldsymbol{n}
+ \alphaa(e_u-\ehatu) - (P_h^{\partial} - P_M)(\boldsymbol{q} \cdot \n + \alphaa u)
\quad \mbox{ on } \quad \partial\Oh.
\end{equation}
\end{lemma}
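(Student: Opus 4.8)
The plan is to derive the error equations by a standard Galerkin-orthogonality argument: subtract the exact weak formulation (satisfied by the HDG projections of the exact solution) from the discrete scheme \eqref{weak formulation}. First I would write down what the exact solution $(\boldsymbol q, u)$ satisfies against test functions $(\boldsymbol v, w, \mu) \in \Vh \times W_h \times M_{h,H}$. For \eqref{error-equation-1}: integrate $\alpha \boldsymbol q + \nabla u = 0$ against $\boldsymbol v$, integrate by parts on each $K$ to move $\nabla u$ onto $\boldsymbol v$, producing $\bint{\alpha \boldsymbol q}{\boldsymbol v} - \bint{u}{\nabla\cdot\boldsymbol v} + \bintEh{u}{\boldsymbol v\cdot\n} = 0$ (the single-valued exact trace $u$ replaces $\uhat$, and since $u$ is single-valued this is legitimate). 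Then replace $\boldsymbol q \to \Piv\boldsymbol q + \boldsymbol\delta_q$, $u \to \Piw u + \delta_u$, and use the defining properties of the HDG projection \eqref{HDG projection-1}, \eqref{HDG projection-2} together with the inclusion $\nabla\cdot\Vh \subset W_h$ to kill the volume term $\bint{\delta_u}{\nabla\cdot\boldsymbol v}$; on the boundary, insert $P_M u$ and write $u = P_M u + (I - P_M) u$, so the leftover $\bintEh{(I-P_M)u}{\boldsymbol v\cdot\n}$ becomes a right-hand side term. Subtracting the discrete equation \eqref{weak formulation-1} then yields \eqref{error-equation-1} with $\ehatu = P_M u - \uhat$ appearing because $\langle P_M u, \boldsymbol v\cdot\n\rangle = \langle u, \boldsymbol v\cdot\n\rangle$ by definition of $P_M$ on $\partial\Oh$ — here one must be slightly careful that $\boldsymbol v\cdot\n|_F \in M_h(F)$ by \eqref{V_in_M}, which is exactly what makes the projection identity applicable.

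Next, for \eqref{error-equation-2} I would test $\nabla\cdot\boldsymbol q = f$ against $w \in W_h$, integrate by parts on each $K$ to get $-\bint{\boldsymbol q}{\nabla w} + \bintEh{\boldsymbol q\cdot\n}{w} = \bint{f}{w}$. Subtract \eqref{weak formulation-2}, split $\boldsymbol q = \Piv\boldsymbol q + \boldsymbol\delta_q$ and use \eqref{HDG projection-2} with $\nabla w \in \nabla W(K)$ to eliminate $\bint{\boldsymbol\delta_q}{\nabla w}$; on the boundary write $\boldsymbol q\cdot\n = P_M(\boldsymbol q\cdot\n) + (I-P_M)(\boldsymbol q\cdot\n)$, which introduces $\ehatq\cdot\n = P_M(\boldsymbol q\cdot\n) - \qhat\cdot\n$ and the right-hand side term $-\bintEh{(I-P_M)(\boldsymbol q\cdot\n)}{w}$. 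For \eqref{error-equation-3}, equation \eqref{weak formulation-3} gives $\langle\qhat\cdot\n,\mu\rangle_{\partial\Oh} = 0$; the exact flux $\boldsymbol q$ is in $\boldsymbol H_{\mathrm{div}}$ and single-valued across faces, so $\langle\boldsymbol q\cdot\n,\mu\rangle_{\partial\Oh} = 0$ for $\mu \in M_{h,H}$ (the jumps cancel on interior $\mathcal{E}^0_h$-faces and vanish on $\mathcal{E}_H$ and $\partial\Omega$ by the structure of $M_{h,H}$ — on $\partial\Omega$ one needs the appropriate conormal/consistency argument, but it is standard), and since $\langle P_M(\boldsymbol q\cdot\n),\mu\rangle = \langle\boldsymbol q\cdot\n,\mu\rangle$, subtracting yields $\langle\ehatq\cdot\n,\mu\rangle_{\partial\Oh} = 0$. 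Finally \eqref{error-equation-4} follows from \eqref{weak formulation-4}: $\uhat = 0$ on $\partial\Omega$ and $u = 0$ on $\partial\Omega$, so $\ehatu = P_M u - \uhat = P_M(u) = 0$ there (since $P_M$ preserves zero boundary data, or more simply $M_{h,H}$ functions vanish on $\partial\Omega$).

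For the last identity \eqref{ehatq}, I would start from the definition of the numerical flux \eqref{trace-q}, $\qhat\cdot\n = \boldsymbol q_h\cdot\n + \tau(u_h - \uhat)$, and subtract it from $P_M(\boldsymbol q\cdot\n)$. Rewrite $P_M(\boldsymbol q\cdot\n) = P^\partial_h(\boldsymbol q\cdot\n) + (P_M - P^\partial_h)(\boldsymbol q\cdot\n)$, and use the identity \eqref{bdry_identity}, namely $P^\partial_h(\boldsymbol q\cdot\n) + \tau P^\partial_h u = \Piv\boldsymbol q\cdot\n + \tau\Piw u$, to replace $P^\partial_h(\boldsymbol q\cdot\n)$ by $\Piv\boldsymbol q\cdot\n + \tau\Piw u - \tau P^\partial_h u$. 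Collecting terms, $\ehatq\cdot\n = (\Piv\boldsymbol q - \boldsymbol q_h)\cdot\n + \tau(\Piw u - u_h) - \tau(P^\partial_h u - \uhat) + (P_M - P^\partial_h)(\boldsymbol q\cdot\n)$; then write $\tau P^\partial_h u = \tau P^\partial_h u - \tau P_M u + \tau P_M u$ and recognize $\tau(P_M u - \uhat) = \tau\ehatu$ and $\Piw u - u_h = e_u$, so the middle terms combine to $\tau(e_u - \ehatu) + \tau(P_M - P^\partial_h)u$. Grouping the two projection-difference pieces gives $-(P^\partial_h - P_M)(\boldsymbol q\cdot\n + \tau u)$, which is exactly \eqref{ehatq}.

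The main obstacle is bookkeeping rather than conceptual: one must repeatedly and carefully insert $P_M$ (which equals $P^H_\partial$ on $\mathcal{E}_H$ and $P^\partial_h$ on $\mathcal{E}^0_h$) into boundary integrals and verify at each step that the function being projected against lies in the relevant local space ($M_h(F)$ or $M_H(F)$) so that the $L^2$-projection identity genuinely applies — this is where Assumption \ref{assum-inclusions}, in particular \eqref{W_in_M} and \eqref{V_in_M}, is essential. A secondary subtlety is the treatment of the boundary faces on $\partial\Omega$ and the consistency of $\langle\boldsymbol q\cdot\n,\mu\rangle$ for the exact flux, which requires $\boldsymbol q \in \boldsymbol H_{\mathrm{div}}$ so that normal traces are single-valued across element interfaces; I would state this regularity hypothesis explicitly (it is implicit in the data assumptions) and otherwise the argument is a routine subtract-and-project computation.
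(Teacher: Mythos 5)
Your proposal is correct and follows essentially the same route as the paper: test the exact solution with discrete test functions, use the projection properties \eqref{HDG projection-1}--\eqref{HDG projection-2} for the volume terms and insert $P_M$ on the boundary, subtract the scheme, and obtain \eqref{ehatq} from the flux definition \eqref{trace-q} combined with \eqref{bdry_identity}. One small remark: your parenthetical justification that $\langle P_M u,\boldsymbol{v}\cdot\boldsymbol{n}\rangle=\langle u,\boldsymbol{v}\cdot\boldsymbol{n}\rangle$ is unnecessary (and would actually fail on faces of $\mathcal{E}_H$, where $\boldsymbol{v}\cdot\boldsymbol{n}\in M_h(F)$ while $P_M=P^H_{\partial}$ projects onto $M_H(F)$); the derivation works precisely because you keep $-\langle(I-P_M)u,\boldsymbol{v}\cdot\boldsymbol{n}\rangle_{\partial\mathcal{T}_h}$ on the right-hand side, exactly as the paper does.
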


\begin{proof}
 Let us begin by noting that the exact solution $(u,\boldsymbol{q})$ obviously satisfies 
\begin{align*}
 \bint{\alpha \boldsymbol{q}}{\boldsymbol{v}}
-\bint{u}{\nabla \cdot \boldsymbol{v}}  + \bintEh{u}{\boldsymbol{v}\cdot \n}  &= 0,\\
-\bint{\boldsymbol{q}}{\nabla w} + \bintEh{\boldsymbol{q} \cdot \n}{w} &= \bint{f}{w},\\
\langle {\boldsymbol{q} \cdot \n},{\mu}\rangle_{\partial\Oh} &= 0,
\end{align*}
for all $(w,\boldsymbol{v}, \mu) \in  W_h \times \Vh \times M_{h,H}$. 
By the orthogonality properties \eqref{HDG projection-1} and \eqref{HDG projection-2} of the projection $\Pi=(\Piv, \Piw)$,
we obtain that
\begin{align*}
 \bint{\alpha \boldsymbol{q}}{\boldsymbol{v}}
-\bint{\Piw u}{\nabla \cdot \boldsymbol{v}}  + \bintEh{u}{\boldsymbol{v}\cdot \n}  &= 0,\\
-\bint{\Piv \boldsymbol{q}}{\nabla w} + \bintEh{\boldsymbol{q} \cdot \n}{w} &= \bint{f}{w},\\
\langle {\boldsymbol{q} \cdot \n},{\mu}\rangle_{\partial\Oh} &= 0,
\end{align*}
for all $(w,\boldsymbol{v}, \mu) \in W_h \times  \Vh \times  M_{h,H}$. 
Moreover, since $P_M$ is the 
$L^2$-projection into $M_{h,H}$, we get, 
\begin{align*}
 \bint{\alpha \boldsymbol{q}}{\boldsymbol{v}}
-\bint{\Piw u}{\nabla \cdot \boldsymbol{v}}  + \bintEh{P_M u}{\boldsymbol{v}\cdot \n}     &= - \bintEh{(I - P_M) u}{\boldsymbol{v} \cdot \n},\\
-\bint{\Piv \boldsymbol{q}}{\nabla w} + \bintEh{P_M(\boldsymbol{q} \cdot \n)}{w} &= \bint{f}{w}
 - \bintEh{(I - P_M)(\boldsymbol{q} \cdot \n)}{w},\\
\langle {P_M(\boldsymbol{q} \cdot \n)},{\mu}\rangle_{\partial\Oh} &= 0,
\end{align*}
for all $(w, \boldsymbol{v}, \mu) \in  W_h \times \Vh \times M_{h,H}$.
Subtracting the four equations defining the weak formulation of the HDG method
\eqref{weak formulation} from the above equations, respectively, we obtain the equations
for the projection of the errors. The last error equation \eqref{error-equation-4} 
is due to the definition of $\uhat$ on $\partial \Omega$.

It remains to prove the identity \eqref{ehatq}  for $\boldsymbol{e}_{\widehat{q}} \cdot \n$. 
On each $F \in \partial K, \, K \in \mathcal{T}_h$ after using the the definition of numerical traces \eqref{trace-q}
we get
\begin{alignat*}{1}
\boldsymbol{e}_{\widehat{q}}\cdot\n - \boldsymbol{e}_q \cdot \n
       &= P_M(\boldsymbol{q} \cdot \n) - \qhat \cdot \n - (\Piv \boldsymbol{q} \cdot \n - \boldsymbol{q}_h \cdot \n) \\
       & =  P_M(\boldsymbol{q} \cdot \n) - \Piv \boldsymbol{q} \cdot \n- (\qhat \cdot \n  - \boldsymbol{q}_h \cdot \n) \\
       & =  P^{\partial}_h(\boldsymbol{q} \cdot \n) - \Piv \boldsymbol{q} \cdot \n+ \alphaa(u_h - \uhat) 
                        + (P_M - P^h_{\partial})(\boldsymbol{q} \cdot \n). \\
\intertext{Then using the property of the projection $\Piw$  defined in \eqref{bdry_identity} the equality reduces to} 
\boldsymbol{e}_{\widehat{q}}\cdot\n - \boldsymbol{e}_q \cdot \n & =  
      \alphaa(- P^{\partial}_h u + \Piw u)+ \alphaa(u_h - \uhat) + (P_M - P^h_{\partial})(\boldsymbol{q} \cdot \n) \\
& =  \alphaa(- P_M u + \Piw u)+ \alphaa(u_h - \uhat) + (P_M - P^h_{\partial})(\boldsymbol{q} \cdot \n + \alphaa u) \\
& =  \alphaa(e_u - \ehatu) + (P_M - P^h_{\partial})(\boldsymbol{q} \cdot \n + \alphaa u) 
\end{alignat*}
and this completes the proof.
\hfill \end{proof}

\subsection{ Estimate for $\boldsymbol{q} - \boldsymbol{q_h}$}\label{estimate for q}
For the error estimate of $\boldsymbol{e}_q$ we need to following lemma:
\begin{lemma}\label{H_div_conforming}
Let the Assumptions \ref{single-faced HDG}, \ref{tau_positive_EH} hold. Then
\begin{enumerate}
\item[(a)]  on each subdomain $T \in \mathcal{P}$, $\boldsymbol{e}_q \in   \boldsymbol{H}(div, T)$; 
\item[(b)] $\|\nabla \cdot \boldsymbol{e}_q\|_{T} = 0, \, \, $ for all $T \in \mathcal{P}$;
\item[(c)] $ \boldsymbol{e}_q \cdot \n|_F = \ehatq \cdot \n|_F,  \, \, $  for all $F \in \mathcal{E}_h^0(T)$.
\end{enumerate}
\end{lemma}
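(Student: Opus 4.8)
The plan is to extract the three claims from the error equations of Lemma~\ref{error_equation} restricted to a single subdomain $T$, exploiting the structure of the local spaces in Table~\ref{table:simplex} together with Assumptions~\ref{single-faced HDG} and \ref{tau_positive_EH}.

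First I would use the divergence-type error equation. Take \eqref{error-equation-2} with test functions $w \in W_h$ supported on a single fine element $K \in \mathcal{T}_h(T)$, so that all boundary terms collapse to $\partial K$. Since for the spaces in Table~\ref{table:simplex} we have $\nabla \cdot \boldsymbol{V}(K) \subseteq W(K)$ (indeed $W(K) = P^k(K)$ for $\mathbf{RT}_k$, $\mathbf{BDFM}_{k+1}$, $\mathbf{HDG}_k$, which is exactly why $\mathbf{BDM}_k$ was excluded), an integration by parts turns $-(\boldsymbol{e}_q,\nabla w)_K + \langle \ehatq\cdot\n, w\rangle_{\partial K}$ into $(\nabla\cdot\boldsymbol{e}_q, w)_K + \langle \ehatq\cdot\n - \boldsymbol{e}_q\cdot\n, w\rangle_{\partial K}$; the right-hand side $-\langle (I-P_M)(\boldsymbol{q}\cdot\n), w\rangle_{\partial K}$ vanishes because on each interior face $F\in\mathcal{E}^0_h(T)$ we have $w|_F \in M_h(F)$ by \eqref{W_in_M} and $P_M = P^h_\partial$ there, while on faces lying on $\partial T$ one uses $w|_F \in M_h(F) \subset M_H(F)$-compatible orthogonality — more precisely, one needs $P_M(\boldsymbol{q}\cdot\n)$ tested against $w|_F$ to reproduce $\boldsymbol{q}\cdot\n$ against $w|_F$; this is where some care with the mixed space $M_{h,H} = M^0_h \oplus M_H$ is needed. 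Combining this with \eqref{ehatq}, which gives $\ehatq\cdot\n - \boldsymbol{e}_q\cdot\n = \alphaa(e_u - \ehatu) - (P^h_\partial - P_M)(\boldsymbol{q}\cdot\n + \alphaa u)$, and then choosing $w$ appropriately, I expect to conclude both (b), $\nabla\cdot\boldsymbol{e}_q = 0$ on each $K$ hence on $T$, and the single-valuedness of $\boldsymbol{e}_q\cdot\n$ across interior faces — the latter being exactly statement (a), that $\boldsymbol{e}_q \in \boldsymbol{H}(\mathrm{div},T)$.

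For (c), I would return to \eqref{ehatq} on an interior face $F\in\mathcal{E}^0_h(T)$: there $P_M = P^h_\partial$, so the last term drops, leaving $\ehatq\cdot\n = \boldsymbol{e}_q\cdot\n + \alphaa(e_u - \ehatu)$. By Assumption~\ref{single-faced HDG} the stabilization $\tau$ is positive on exactly one face of each $K$, and by Assumption~\ref{tau_positive_EH} that face is the one on the coarse skeleton whenever $K$ touches $\partial\mathcal{P}$; hence on a genuinely interior face $F\in\mathcal{E}^0_h(T)$, $\tau$ must vanish from the side of at least one adjacent element. But $\ehatq\cdot\n$ is single-valued on $F$ (it lives in $M_h(F)$, which is common to $K^+$ and $K^-$ by the conformity of $\mathcal{T}_h(T)$, as in the proof of Theorem~\ref{stability}), and so is $\boldsymbol{e}_q\cdot\n$ by part (a); subtracting the identity from the two sides forces $\alphaa(e_u-\ehatu)$ to be single-valued and, combined with it being zero on the side where $\tau=0$, yields $\tau(e_u - \ehatu)|_F = 0$, whence $\ehatq\cdot\n|_F = \boldsymbol{e}_q\cdot\n|_F$.

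The main obstacle I anticipate is bookkeeping around the hybrid Lagrange-multiplier space on faces $F \subset \partial T$ belonging to the coarse skeleton, where $P_M = P^H_\partial$ rather than $P^h_\partial$, so the term $(P^h_\partial - P_M)(\boldsymbol{q}\cdot\n + \alphaa u)$ does not vanish and the clean identity $\ehatq\cdot\n = \boldsymbol{e}_q\cdot\n$ fails — which is precisely why (c) is claimed only for interior faces $F\in\mathcal{E}^0_h(T)$. One has to be careful to restrict all three conclusions to the interior of $T$ and to use \eqref{error-equation-3} tested against $\mu\in M^0_h|_{\mathcal{E}^0_h(T)}$ (rather than the full $M_{h,H}$) to control the interior jumps, mirroring the structure already used in Subsection~\ref{upscaledFEM} where the local HDG solver lives on $M^0_h|_{\mathcal{E}^0_h(T)}$. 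A secondary technical point is verifying that $W(K)$ containing piecewise linears (available for all spaces in Table~\ref{table:simplex}) together with $\nabla W(K) \subseteq \boldsymbol{V}(K)$ lets one pick the test vector field needed to isolate $\nabla\cdot\boldsymbol{e}_q$ without boundary pollution; this is routine given Assumption~\ref{assum-inclusions}.
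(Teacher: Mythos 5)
Your outline starts from the right place (testing \eqref{error-equation-2} element by element and invoking \eqref{ehatq}), but the heart of the paper's proof is missing and the logic connecting (a) and (c) is circular. The entire difficulty is concentrated in your phrase ``choosing $w$ appropriately'': for an interior element $K$ whose positive face is $F^*$, the paper constructs a \emph{single} test function $w\in P^k(K)$ satisfying $(w,r)_K=(\nabla\cdot\boldsymbol{e}_q,r)_K$ for all $r\in P^{k-1}(K)$ and $\langle w,\mu\rangle_{F^*}=\langle e_u-\ehatu,\mu\rangle_{F^*}$ for all $\mu\in P^k(F^*)$, proves this square system is uniquely solvable (the trace on $F^*$ vanishes, so $w=x_1\tilde w$ with $\tilde w\in P^{k-1}(K)$ and the volume conditions kill $\tilde w$), and only then reads off simultaneously $\nabla\cdot\boldsymbol{e}_q|_K=0$ \emph{and} $(e_u-\ehatu)|_{F^*}=0$. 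The second conclusion is what gives $\alphaa(e_u-\ehatu)=0$ on every interior face, hence (c), hence the single-valuedness of $\boldsymbol{e}_q\cdot\n$, hence (a). This lifting is also precisely what fails for $\mathbf{BDM}_k$; it does not come for free from Assumption \ref{assum-inclusions}.

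Two specific steps in your plan would not survive being written out. First, you propose to establish (a) before (c) and then deduce (c) from the single-valuedness of $\boldsymbol{e}_q\cdot\n$; but there is no independent route to (a) --- the only way to see that $\boldsymbol{e}_q\cdot\n$ is single-valued across $F\in\mathcal{E}^0_h(T)$ is to show it coincides with the single-valued $\ehatq\cdot\n$, which \emph{is} statement (c). Second, your claim that on an interior face ``$\tau$ must vanish from the side of at least one adjacent element'' does not follow from Assumptions \ref{single-faced HDG} and \ref{tau_positive_EH}: two neighboring interior elements may both have the shared face as their unique positive face. The paper never needs this; it proves $(e_u-\ehatu)|_{F^*}=0$ one element at a time via the lifting above. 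Finally, for elements touching $\partial T$ the right-hand side $-\langle(I-P_M)(\boldsymbol{q}\cdot\n),w\rangle_{F^*}$ genuinely does not vanish (there $P_M=P^H_\partial\neq P^h_\partial$); the paper disposes of it by choosing the lifted $w$ with zero trace on $F^*$, a point your proposal flags as ``needing care'' but leaves unresolved.
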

\begin{proof}
Now take any $T \in \mathcal{P}$.
To prove that $\boldsymbol{e}_q$ is $ \boldsymbol{H}_{div}$-conforming in $T$, we need to show that 
$\boldsymbol{e}_q \cdot \n$ is continuous across all interior interfaces $F \in \mathcal{E}_h^0(T)$. 
By the error equation \eqref{error-equation-3}, we know that $\ehatq \cdot \n$ is single valued on all interior 
interfaces due to the fact that $\ehatq \cdot \n$ and the test function $\mu$ are in the same space $M_h(F)$. 
Hence, it is suffices to show that 
\[
\boldsymbol{e}_q \cdot \n|_{F} = \ehatq \cdot \n|_{F}, \quad \forall \; {F \in  \mathcal{E}_h^0(T)}.
\] 
First of all, on each interior face $P^h_{\partial} = P_M$, together with \eqref{ehatq}, we have
\begin{equation}\label{local_relation}
\ehatq \cdot \n = \boldsymbol{e}_q \cdot \n + \tau(e_u - \ehatu), \quad \forall \; F \in  \mathcal{E}^0_h(T).
\end{equation}
From here we can see that $\boldsymbol{e}_q \cdot \n |_F = \ehatq \cdot \n |_F$ if $\tau|_F = 0$. We only need to show that 
\begin{equation}\label{local_identity}
\tau(e_u -\ehatu)|_{F^*} = 0, \quad \forall \; F^* \in \partial K,~~ F^* \cap \mathcal{E}_H = \emptyset.
\end{equation}
On any $K$ adjacent with $\mathcal{E}_H$, by our assumptions, $\tau > 0$ on $F^*$ where $F^*$ is on the boundary of $T$. 
So on the other faces $\tau = 0$ and hence $\ehatq \cdot \n = \boldsymbol{e}_q \cdot \n$. 

Let us consider an arbitrary interior element $K$ with $\tau > 0$ on $F^*$. We restrict the error equation \eqref{error-equation-2} 
on $K$, integrating by parts, we have
\[
\bintK{\nabla \cdot \boldsymbol{e}_q}{w} + \bintEK{\ehatq \cdot \n - \boldsymbol{e}_q \cdot \n}{w} = - \bintEK{(I - P_M)(\boldsymbol{q} \cdot \n)}{w}.
\]
By \eqref{local_relation} and the fact that $P_M = P^h_{\partial}$ on $\partial K$, we have
\[
\bintK{\nabla \cdot \boldsymbol{e}_q}{w} + \bintEK{\tau(e_u - \ehatu)}{w} = 0.
\]
Since $\tau > 0$ only on $F^*$, we have
\[
\bintK{\nabla \cdot \boldsymbol{e}_q}{w} + \langle \; \tau(e_u - \ehatu)\;,\; w\; \rangle_{F^*} = 0.
\]
Now let  $w \in P^k(K)$ be such that 
\begin{subequations}\label{local_lifting}
\begin{align}
\bintK{w}{r} &= \bintK{\nabla \cdot \boldsymbol{e}_q}{r}, \quad & \forall \; r \in P^{k-1}(K), \\
\langle  w \; , \; \mu \rangle_{F^*} &= \langle  e_u - \ehatu \; , \; \mu \rangle_{F^*} 
& \forall \; \mu \in P^k(F^*).
\end{align}
\end{subequations}
One can easily see that such $w  \in P^k(K)$ exists and is unique.  Indeed,  
this is  a square system for the coefficients of the polynomial $w$ and it is sufficient to show that the homogeneous 
system has only a trivial solution. On $F^*$ the equation $\langle  w  ,  \mu \rangle_{F^*}=0$ represents
a square homogeneous system for the trace $w|_{F^*} \in P^k(F^*)$. This ensures that the trace  is
identically zero on $F^*$. Without loss of generality we can assume that $F^*$ is in the hyperplane $x_1=0$.
Then obviously $w=x_1 \tilde w$ with $\tilde  w \in P^{k-1}(K)$
	and now  $( x_1 \tilde  w  , \; r )_K = 0 $ for all $ r \in P^{k-1}(K)$ implies $\tilde w =0$.
Then we plug $w$ into the above error equation and notice that 
$\nabla \cdot \boldsymbol{e}_{q} \in P^{k-1}(K), ~e_u - \ehatu \in P^k(F^*)$ to  get
\[
\bintK{\nabla \cdot \boldsymbol{e}_q}{ \nabla \cdot \boldsymbol{e}_q} + \langle \; \tau(e_u - \ehatu)\; , \; e_u - \ehatu \; \rangle_{F^*} = 0.
\]
This implies 
\[
\nabla \cdot \boldsymbol{e}_q|_K = 0, \quad e_u - \ehatu|_{F^*} = 0
\]
and hence, $\boldsymbol{e}_q \cdot \n|_{F} = \ehatq \cdot \n|_{F}$ for all $ F \in  \mathcal{E}^0_h(K)$.
Consequently, $\boldsymbol{e}_q \in \boldsymbol{H}(div, T)$ for all $T \in \mathcal{P}$. 

To finish, we still need to show that $\nabla \cdot \boldsymbol{e}_q|_K = 0$ when $K$ is adjacent with the boundary of $T$. 
Similarly as interior element $K$, error equation \eqref{error-equation-2} gives
\[
\bintK{\nabla \cdot \boldsymbol{e}_q}{w} + \langle \; \tau(e_u - \ehatu)\;,\; w\; \rangle_{F^*} = - \langle \; (I - P_M) (\boldsymbol{q} \cdot \n) \; ,\; w \; \rangle_{F^*}.
\]
Take $w$ to be again the unique element in $P^k(K)$ such that
$$
\bintK{w}{r}  = \bintK{\nabla \cdot \boldsymbol{e}_q}{r} \quad  \forall \, r\in P^{k-1}(K) \quad \mbox{and} \quad
\langle  w \;,\; \mu  \rangle_{F^*}  = 0 \quad \forall \, \mu \in P^k(F^*).
$$
The second equation implies that $w = 0$ on $F^*$, so we have
\[\bintK{\nabla \cdot \boldsymbol{e}_q}{w} + \langle  \tau(e_u - \ehatu)\;,\; w \rangle_{F^*}  
= \bintK{\nabla \cdot \boldsymbol{e}_q}{\nabla \cdot \boldsymbol{e}_q} =0 
\quad \rightarrow  \quad \nabla \cdot \boldsymbol{e}_q=0.
\]
This completes the proof.
\hfill \end{proof}
\begin{remark}The above proof cannot be applied for $\mathbf{BDM}_k$. 
Namely, a key step is the special choice of $w$ which satisfies \eqref{local_lifting}. 
In the case of $\mathbf{BDM}_k$, $w$ is in a smaller space $P^{k-1}(K)$, 
hence the existence of $w$ is no longer valid.
\end{remark}

We are now ready to obtain an upper bound of the $L^2$-norm of $\boldsymbol{e}_q$.
We first prove the following Lemma.

\begin{lemma}\label{energy_argument}
Under Assumption \ref{assum-inclusions}, we have 
\begin{alignat*}{1}
\|\boldsymbol{e}_q\|^2_{\alpha, \Omega} + \|e_u- \ehatu\|^2_{\alphaa, \partial \mathcal{T}_h}
& = -\bint{\alpha \boldsymbol{\delta}_q}{\boldsymbol{e}_q} - \bintEH{(I - P^H_{\partial}) u}{\boldsymbol{e}_q \cdot \n} \\
& + \bintEH{P^h_{\partial} u  - P^H_{\partial} u}{\tau(e_u - \ehatu)} - \bintEH{(I - P^h_{\partial})(\boldsymbol{q}\cdot \n)}{e_u - \ehatu},
\end{alignat*}
where 
$$\|\boldsymbol{e}_q\|^2_{\alpha, \Omega}:= \bint{\alpha \boldsymbol{e}_q}{\boldsymbol{e}_q}, \quad \mbox{and}  \quad
\|e_u- \ehatu\|^2_{\alphaa, \mathcal{E}_h} = \bintEh{\alphaa(e_u - \ehatu)}{e_u - \ehatu}.$$
\end{lemma}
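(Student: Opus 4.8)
The plan is to run the standard HDG energy identity on the error equations of Lemma~\ref{error_equation}, and then reorganize the resulting interface terms with the help of the trace identity \eqref{ehatq} and the splitting of $\partial\Oh$ into the interior fine faces $\mathcal{E}^0_h$ and the coarse skeleton $\partial\mathcal{P}$ (the latter subdivided by $\EH$). No single deep estimate is involved; the work is in the bookkeeping.

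First I would take $\boldsymbol{v}=\boldsymbol{e}_q$ in \eqref{error-equation-1}, $w=e_u$ in \eqref{error-equation-2}, $\mu=\ehatu$ in \eqref{error-equation-3}, integrate by parts elementwise in the term $\bint{\boldsymbol{e}_q}{\nabla e_u}$ of the second equation, and add the three identities. The volume term $\bint{e_u}{\nabla\cdot\boldsymbol{e}_q}$ cancels, and collecting the face integrals yields
\begin{equation*}
\|\boldsymbol{e}_q\|^2_{\alpha,\Omega} + \bintEh{(\ehatq-\boldsymbol{e}_q)\cdot\n}{e_u-\ehatu}
= -\bint{\alpha\boldsymbol{\delta}_q}{\boldsymbol{e}_q} - \bintEh{(I-P_M)u}{\boldsymbol{e}_q\cdot\n} - \bintEh{(I-P_M)(\boldsymbol{q}\cdot\n)}{e_u}.
\end{equation*}
Next I would substitute \eqref{ehatq} for $(\ehatq-\boldsymbol{e}_q)\cdot\n$ in the left-hand side: the $\alphaa(e_u-\ehatu)$ piece produces exactly $\|e_u-\ehatu\|^2_{\alphaa,\partial\Oh}$, while the mismatch piece $(P^h_\partial-P_M)(\boldsymbol{q}\cdot\n+\alphaa u)$ produces an extra interface term that I move to the right-hand side.

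The core step is to show that every remaining face integral on the right collapses onto the coarse skeleton. On an interior fine face $F\in\mathcal{E}^0_h$ one has $P_M=P^h_\partial$, so the mismatch term vanishes there; moreover $\boldsymbol{e}_q\cdot\n|_F\in M_h(F)$ and $e_u|_F\in M_h(F)$ (since $\boldsymbol{e}_q|_K\in\boldsymbol{V}(K)$, $e_u|_K\in W(K)$, and the inclusions \eqref{W_in_M}, \eqref{V_in_M} of Assumption~\ref{assum-inclusions} hold), so $\bintEK{(I-P^h_\partial)u}{\boldsymbol{e}_q\cdot\n}$ and $\bintEK{(I-P^h_\partial)(\boldsymbol{q}\cdot\n)}{e_u}$ vanish face by face by the definition of $P^h_\partial$. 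On $\partial\Omega$ the contributions involving $u$ vanish because $u=0$ there, $\ehatu=0$ by \eqref{error-equation-4}, and the residual piece is accounted for in the skeleton sum. On the coarse skeleton $P_M=P^H_\partial$, and since $\langle\cdot,\cdot\rangle_{\partial\Oh}$ restricted to skeleton faces coincides with $\langle\cdot,\cdot\rangle_{\partial\mathcal{T}_H}=\sum_{T}\int_{\partial T}$, the surviving right-hand side is
\begin{equation*}
-\bint{\alpha\boldsymbol{\delta}_q}{\boldsymbol{e}_q}
-\bintEH{(I-P^H_\partial)u}{\boldsymbol{e}_q\cdot\n}
-\bintEH{(I-P^H_\partial)(\boldsymbol{q}\cdot\n)}{e_u}
+\bintEH{(P^h_\partial-P^H_\partial)(\boldsymbol{q}\cdot\n+\alphaa u)}{e_u-\ehatu}.
\end{equation*}

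Finally I would perform an elementary rearrangement of the last two skeleton groups. Writing $P^h_\partial-P^H_\partial=(I-P^H_\partial)-(I-P^h_\partial)$, the terms paired with $e_u$ collapse to $-\bintEH{(I-P^h_\partial)(\boldsymbol{q}\cdot\n)}{e_u}$; the terms paired with $\ehatu$ give $-\bintEH{(P^h_\partial-P^H_\partial)(\boldsymbol{q}\cdot\n)}{\ehatu}=\bintEH{(I-P^h_\partial)(\boldsymbol{q}\cdot\n)}{\ehatu}$, using that $\langle P^H_\partial(\boldsymbol{q}\cdot\n),\ehatu\rangle_F=\langle\boldsymbol{q}\cdot\n,\ehatu\rangle_F$ because $\ehatu\in M_H(F)$ on the skeleton; and the $\alphaa u$ part gives $\bintEH{P^h_\partial u-P^H_\partial u}{\alphaa(e_u-\ehatu)}$ (recall $\alphaa$ is the constant $\tau$). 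The $\boldsymbol{q}\cdot\n$ terms thus assemble into $-\bintEH{(I-P^h_\partial)(\boldsymbol{q}\cdot\n)}{e_u-\ehatu}$, which together with the two terms already isolated gives exactly the claimed identity. I expect the main obstacle to be precisely this face-by-face bookkeeping — correctly splitting $\partial\Oh$ into $\mathcal{E}^0_h$, $\partial\mathcal{P}\setminus\partial\Omega$ and $\partial\Omega$, invoking Assumption~\ref{assum-inclusions} exactly on the interior faces to kill the $(I-P^h_\partial)$ terms, checking that the numerical-trace space forces the right cancellations on $\partial\Omega$, and keeping straight the interplay among $P_M$, $P^h_\partial$ and $P^H_\partial$ when reducing the skeleton terms.
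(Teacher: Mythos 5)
Your proposal is correct and follows essentially the same route as the paper's proof: test the error equations of Lemma \ref{error_equation} with $(\boldsymbol{e}_q,e_u,\ehatu)$, insert the trace identity \eqref{ehatq} to produce the $\|e_u-\ehatu\|^2_{\alphaa,\partial\mathcal{T}_h}$ term, kill all contributions on faces off the coarse skeleton via $P_M=P^h_{\partial}$ and the inclusions of Assumption \ref{assum-inclusions}, and rearrange the surviving skeleton terms using the orthogonality of $(I-P^H_{\partial})$ against $\ehatu\in M_H(F)$. The only difference is cosmetic: the paper performs the $e_u\mapsto e_u-\ehatu$ rearrangement globally on $\partial\mathcal{T}_h$ before restricting to $\partial\mathcal{T}_H$, whereas you restrict first and rearrange on the skeleton.
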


\begin{proof}
By the error equation \eqref{error-equation-4} we know that $\ehatu \in M^0_{h,H}$. 
Taking $(\boldsymbol{v}, w, \mu) = (\boldsymbol{e}_q, e_u, \ehatu)$ in the error 
equations \eqref{error-equation-1}-\eqref{error-equation-3}
respectively and adding, we get, after some algebraic manipulation,
\begin{align*}
\|\boldsymbol{e}_q\|^2_{\alpha, \Omega} - \bintEh{\boldsymbol{e}_q \cdot \n- \ehatq \cdot \n}{e_u - \ehatu} = 
 & - \bint{\alpha \boldsymbol{\delta}_q}{\boldsymbol{e}_q} - \bintEh{(I - P_M)u}{\boldsymbol{e}_q \cdot \n}\\
 &- \bintEh{(I-P_M)(\boldsymbol{q} \cdot \n) }{e_u}.
\end{align*}
Inserting the identity \eqref{ehatq} in the above equation, we get
\begin{align*}
\|\boldsymbol{e}_q\|^2_{\alpha, \Omega} + \|e_u- \ehatu\|^2_{\alphaa, \partial \mathcal{T}_h}= 
   & - \bint{\alpha \boldsymbol{\delta}_q}{\boldsymbol{e}_q} - \bintEh{(I - P_M)u}{\boldsymbol{e}_q \cdot \n} \\
& - \bintEh{(I-P_M)(\boldsymbol{q} \cdot \n) }{e_u} \\
& + \bintEh{(P^h_{\partial}-P_M)(\boldsymbol{q} \cdot \n + \alphaa u)}{e_u - \ehatu}\\
= & - \bint{\alpha \boldsymbol{\delta}_q}{\boldsymbol{e}_q} - \bintEh{(I - P_M)u}{\boldsymbol{e}_q \cdot \n} \\
&- \bintEh{(I-P_M)(\boldsymbol{q} \cdot \n) }{e_u -\ehatu} + \bintEh{(P^h_{\partial}-P_M)(\boldsymbol{q} \cdot \n)}{e_u - \ehatu} \\
&+ \bintEh{(P^h_{\partial}-P_M)(\tau u)}{e_u - \ehatu}\\
\intertext{Now using  the fact that $\ehatu$ is single valued on $\mathcal{E}_h$ and $\ehatu = 0$ on $\partial \Omega$ we get}
\|\boldsymbol{e}_q\|^2_{\alpha, \Omega} + \|e_u- \ehatu\|^2_{\alphaa, \partial \mathcal{T}_h}
= & - \bint{\alpha \boldsymbol{\delta}_q}{\boldsymbol{e}_q} - \bintEh{(I - P_M)u}{\boldsymbol{e}_q \cdot \n}\\
& + \bintEh{P^h_{\partial} u-P_M u}{\tau(e_u - \ehatu)} - \bintEh{(I - P^h_{\partial})(\boldsymbol{q}\cdot \n)}{e_u - \ehatu}.
\end{align*}
Finally noticing  that on each $F \in \partial \mathcal{T}_h, F \cap \mathcal{E}_H = \emptyset$
\[
P_M = P^h_{\partial}, \quad e_u|_F, \ehatu|_F, \boldsymbol{e}_q \cdot \n|_F \in M_h(F),
\]
we get the identity 
\begin{align*}
- \bintEh{(I & - P_M)u}{\boldsymbol{e}_q \cdot \n} 
     + \bintEh{P^h_{\partial} u-P_M u}{\tau(e_u - \ehatu)} - \bintEh{(I - P^h_{\partial})(\boldsymbol{q}\cdot \n)}{e_u - \ehatu} \\
= & - \bintEH{(I - P^H_{\partial})u}{\boldsymbol{e}_q \cdot \n} 
       + \bintEH{P^h_{\partial} u-P^H_{\partial} u}{\tau(e_u - \ehatu)} - \bintEH{(I - P^h_{\partial})(\boldsymbol{q}\cdot \n)}{e_u - \ehatu}, 
\end{align*}
which completes the proof.
\hfill \end{proof}

Now we are ready to present our first estimate for $\boldsymbol{e}_q$:

\begin{theorem}\label{error_q}
If Assumption \ref{assum-inclusions} -\ref{tau_positive_EH} hold, then we have
\begin{align*}
\|\boldsymbol{e}_q\|_{\alpha, \Omega} + \|e_u - \ehatu\|_{\tau, \partial \mathcal{T}_h} 
\le C_{\alpha} \|\boldsymbol{\delta}_q\|_{\alpha,\Omega}  & 
+ C (H^{t - \frac{1}{2}} L^{-\frac{1}{2}}  + \tau H^t L^{-\frac{1}{2}}) \|u\|_{t+ 1, \Om}\\
& 
+ C\tau h^s L^{-\frac{1}{2}} \|u\|_{s+1, \Omega} + C \tau^{-\frac{1}{2}} h^s L^{-\frac{1}{2}}\|\boldsymbol{q}\|_{s+1, \Omega},
\end{align*}
for all $0 \le s \le k+1, \; 0 \le t \le l + 1$. The constants $C$ are independent of the mesh size $h, H$. $C_{\alpha}$ solely depends on $\alpha$. 
\end{theorem}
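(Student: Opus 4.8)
The plan is to start from the energy identity established in Lemma \ref{energy_argument}, which already expresses $\|\boldsymbol{e}_q\|^2_{\alpha,\Omega}+\|e_u-\ehatu\|^2_{\tau,\partial\mathcal{T}_h}$ as a sum of four terms living on the coarse skeleton $\mathcal{E}_H$ (the interior contributions having cancelled by the choice $P_M=P^h_\partial$ there). The first term, $-\bint{\alpha\boldsymbol{\delta}_q}{\boldsymbol{e}_q}$, is immediately bounded by $C_\alpha\|\boldsymbol{\delta}_q\|_{\alpha,\Omega}\|\boldsymbol{e}_q\|_{\alpha,\Omega}$ via Cauchy--Schwarz in the weighted inner product, and will be absorbed into the left-hand side at the end. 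The remaining three boundary terms must be estimated so that the $\boldsymbol{e}_q$-factors and $(e_u-\ehatu)$-factors they carry are controlled by the left-hand side norms, producing the stated data-dependent powers of $h$, $H$, $L$.

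First I would treat the term $-\bintEH{(I-P^H_\partial)u}{\boldsymbol{e}_q\cdot\n}$. Here the crucial input is Lemma \ref{H_div_conforming}: on each $T\in\mathcal{P}$, $\boldsymbol{e}_q\in\boldsymbol{H}(\mathrm{div},T)$ with $\nabla\cdot\boldsymbol{e}_q=0$, so $\boldsymbol{e}_q\cdot\n$ on $\partial T$ is a genuine trace of an $\boldsymbol{H}(\mathrm{div})$ field with vanishing divergence. This lets me bound $\|\boldsymbol{e}_q\cdot\n\|_{-1/2,\partial T}$ (or work against the test function's $H^{1/2}$ norm) by $\|\boldsymbol{e}_q\|_{T}$ via the standard trace estimate for $\boldsymbol{H}(\mathrm{div})$. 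Combining with the approximation estimate \eqref{PHd} for $(I-P^H_\partial)u$ on $\partial T$ gives a bound of the form $C H^{t-1/2}L^{-1/2}\|u\|_{t+1,T}\,\|\boldsymbol{e}_q\|_T$; summing over $T$ and using Cauchy--Schwarz over the subdomains yields the $CH^{t-1/2}L^{-1/2}\|u\|_{t+1,\Omega}\|\boldsymbol{e}_q\|_{\alpha,\Omega}$ contribution. The same trace-of-$\boldsymbol{H}(\mathrm{div})$ idea, paired with \eqref{standard_3} and the fact that $\boldsymbol{e}_q$ has zero divergence, handles the last term $-\bintEH{(I-P^h_\partial)(\boldsymbol{q}\cdot\n)}{e_u-\ehatu}$: one pairs $\|(I-P^h_\partial)(\boldsymbol{q}\cdot\n)\|_{\partial T}\le C h^s L^{-1/2}\|\boldsymbol{q}\|_{s+1,T}$ against $\|e_u-\ehatu\|_{\partial T}$, but since $\tau>0$ only on a single coarse face per boundary element (Assumptions \ref{single-faced HDG}, \ref{tau_positive_EH}), the factor $e_u-\ehatu$ restricted to $\mathcal{E}_H$ appears weighted by $\tau$, so writing $\|e_u-\ehatu\|_{\partial T\cap\mathcal{E}_H}=\tau^{-1/2}\|e_u-\ehatu\|_{\tau,\partial T\cap\mathcal{E}_H}$ converts this into $C\tau^{-1/2}h^s L^{-1/2}\|\boldsymbol{q}\|_{s+1,\Omega}\,\|e_u-\ehatu\|_{\tau,\partial\mathcal{T}_h}$. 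The middle term $\bintEH{(P^h_\partial-P^H_\partial)u}{\tau(e_u-\ehatu)}$ is the cleanest: write $(P^h_\partial-P^H_\partial)u=(u-P^H_\partial u)-(u-P^h_\partial u)$, bound each difference on $\partial T$ by \eqref{PHd} and \eqref{Phd} respectively, note the $\tau$ makes the other factor $\tau^{1/2}\|e_u-\ehatu\|_{\tau,\partial\mathcal{T}_h}$, and collect the $\tau H^t L^{-1/2}\|u\|_{t+1,\Omega}$ and $\tau h^s L^{-1/2}\|u\|_{s+1,\Omega}$ contributions.

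Having bounded every right-hand side term by (one of the two norms on the left) times (a data-dependent quantity), I would invoke Young's inequality to absorb all occurrences of $\|\boldsymbol{e}_q\|_{\alpha,\Omega}$ and $\|e_u-\ehatu\|_{\tau,\partial\mathcal{T}_h}$ into the left-hand side, isolating $C_\alpha\|\boldsymbol{\delta}_q\|_{\alpha,\Omega}$ separately, and then take square roots to arrive at the asserted estimate; keeping track of which constants depend on $\alpha$ versus on the mesh is routine bookkeeping.

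The main obstacle is the proper handling of $\boldsymbol{e}_q\cdot\n$ on the \emph{coarse} skeleton. Unlike in a standard HDG analysis, $\boldsymbol{e}_q\cdot\n$ is not controlled pointwise against a convenient projection there, so one genuinely needs the structural fact from Lemma \ref{H_div_conforming} that $\boldsymbol{e}_q$ is divergence-free and $\boldsymbol{H}(\mathrm{div})$-conforming per subdomain, together with a trace inequality on the subdomain scale $T$ (of size $L$) that tracks the correct power of $L$ — this is exactly where the $L^{-1/2}$ factors, and the interplay among the three scales, enter. Ensuring the trace and scaling arguments produce the sharp half-integer powers $H^{t-1/2}$ (rather than $H^t$) in the first term, while the $\tau$-weighted terms only see integer powers, is the delicate point; everything else is Cauchy--Schwarz, Young's inequality, and summation over subdomains.
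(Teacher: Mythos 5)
Your overall architecture matches the paper's: start from Lemma \ref{energy_argument}, absorb the $\boldsymbol{\delta}_q$ term, treat the middle term by splitting $(P^h_\partial-P^H_\partial)u$ and weighting by $\tau^{1/2}$, treat the last term with a $\tau^{-1/2}$ weight, and finish with Young's inequality. The last two terms are handled exactly as in the paper.

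However, there is a genuine gap in your treatment of the key term $-\bintEH{(I-P^H_\partial)u}{\boldsymbol{e}_q\cdot\n}$. You propose to pair the $\boldsymbol{H}(\mathrm{div},T)$ trace bound for $\boldsymbol{e}_q\cdot\n$ (i.e.\ the $H^{-1/2}(\partial T)$--$H^{1/2}(\partial T)$ duality) with the approximation estimate \eqref{PHd}. This does not work: \eqref{PHd} is an $L^2(\partial T)$ estimate and yields the power $H^t$, not $H^{t-1/2}$, and the duality pairing needs control of $\|u-P^H_\partial u\|_{1/2,\partial T}$, which is not available because $P^H_\partial u$ is a \emph{discontinuous} piecewise polynomial on $\mathcal{E}_H(T)$, so $u-P^H_\partial u\notin H^{1/2}(\partial T)$ in any useful sense. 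If instead you pair in $L^2$ and use a discrete trace inequality for $\boldsymbol{e}_q\cdot\n$ on the fine elements, you get the suboptimal factor $h^{-1/2}H^t$ that the paper explicitly warns about in the remark following Theorem \ref{error_q}. The missing idea is the insertion of the \emph{continuous} interpolant $\mathcal{I}^0_H u$: write $u-P^H_\partial u=(u-\mathcal{I}^0_H u)+(\mathcal{I}^0_H u-P^H_\partial u)$. The first piece is continuous, so the $H^{-1/2}$--$H^{1/2}$ duality applies and \eqref{IH0} delivers exactly $H^{t-1/2}L^{-1/2}$; the second piece lies in $M_H$, so by the Galerkin orthogonality \eqref{error-equation-3} one may replace $\boldsymbol{e}_q\cdot\n$ by $\boldsymbol{e}_q\cdot\n-\ehatq\cdot\n$, which via \eqref{ehatq} becomes $-\tau(e_u-\ehatu)+(P^h_\partial-P_M)(\boldsymbol{q}\cdot\n+\tau u)$ and is then estimated in $L^2$ with the $\tau$-weights; these steps generate the $\tau H^t L^{-1/2}$ and part of the $h^s$ contributions. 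Without this two-step decomposition your argument cannot produce the stated $H^{t-\frac12}$ rate free of negative powers of $h$.
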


\begin{proof}
We recall that by definition  
$\|\mu\|^2_{t,\partial \mathcal{T}_H}:= \sum_{T \in \mathcal{P}} \|\mu\|^2_{t, \partial T}$.
We begin by giving an alternative expression for $\bintEH{u - P^H_{\partial} u}{\boldsymbol{e}_q \cdot \n}$.  
Using that $\mathcal{I}^0_H u - P^H_{\partial} u \in M_H$ and  the equation \eqref{error-equation-3} we get
\begin{align*}
\bintEH{u - P^H_{\partial} u}{\boldsymbol{e}_q \cdot \n} & 
         = \bintEH{u - \mathcal{I}^0_H u}{\boldsymbol{e}_q \cdot \n} + \bintEH{\mathcal{I}^0_H u - P^H_{\partial} u}{\boldsymbol{e}_q \cdot \n} \\
& = \bintEH{u - \mathcal{I}^0_H u}{\boldsymbol{e}_q \cdot \n} + \bintEH{\mathcal{I}^0_H u - P^H_{\partial} u}{\boldsymbol{e}_q \cdot \n - \ehatq \cdot \n}.
\end{align*}
Then further using  \eqref{ehatq} we get
\begin{align*}
\bintEH{u - P^H_{\partial} u}{\boldsymbol{e}_q \cdot \n}
& = \bintEH{u - \mathcal{I}^0_H u}{\boldsymbol{e}_q \cdot \n} - \bintEH{\mathcal{I}^0_H u - P^H_{\partial} u}{\tau(e_u - \ehatu)}\\
& \quad + \bintEH{\mathcal{I}^0_H u - P^H_{\partial} u }{(P^h_{\partial} - P_M)(\boldsymbol{q} \cdot \n + \tau u)}.
\end{align*}
Then  using Lemma \ref{H_div_conforming} we get  the estimate:
\begin{align*}
\bintEH{u - \mathcal{I}^0_H u}{\boldsymbol{e}_q \cdot \n} 
& = \sum_{T \in \mathcal{P}} \|u - \mathcal{I}_H^0 u \|_{\frac{1}{2}, \partial T} \|\boldsymbol{e}_q\|_{H(div ,T)}\\
 &=  \sum_{T \in \mathcal{P}} \|u - \mathcal{I}_H^0 u \|_{\frac{1}{2}, \partial T} \|\boldsymbol{e}_q\|_{T)}\\
 &\le C H^{t-\frac{1}{2}} L^{-\frac{1}{2}} \|u\|_{t+1, \Om} \|\boldsymbol{e}_q\|_{\Omega},
\end{align*}
for all $0 \le t \le l+1$, where in the  last step we used Lemma \ref{bdy_to_interior}.

By the previous identity and Lemma \ref{energy_argument}, we have  
\begin{align*}
\|\boldsymbol{e}_q\|^2_{\alpha, \Omega} & + \|e_u- \ehatu\|^2_{\alphaa, \partial \mathcal{T}_h}
 = -\bint{\alpha \boldsymbol{\delta}_q}{\boldsymbol{e}_q} -\bintEH{u - \mathcal{I}^0_H u}{\boldsymbol{e}_q \cdot \n} \\
&+ \bintEH{\mathcal{I}^0_H u - P^H_{\partial} u}{\tau(e_u - \ehatu)}
 - \bintEH{\mathcal{I}^0_H u - P^H_{\partial} u }{(P^h_{\partial} - P^H_{\partial})(\boldsymbol{q} \cdot \n + \tau u)} \\
& + \bintEH{P^h_{\partial} u  - P^H_{\partial} u}{\tau(e_u - \ehatu)} - \bintEH{(I - P^h_{\partial})(\boldsymbol{q}\cdot \n)}{e_u - \ehatu}\\
&\le \|\boldsymbol{e}_q\|_{\alpha, \Omega} \|\boldsymbol{\delta}_{q}\|_{\alpha, \Omega} 
+ C H^{t-\frac{1}{2}} L^{-\frac{1}{2}} \|u\|_{t+1, \Om} \|\boldsymbol{e}_q\|_{\Omega} 
 +\tau^\frac12 \|\mathcal{I}^0_H u - P^H_{\partial} u\|_{\partial \mathcal{T}_H} \|e_u - \ehatu\|_{\tau, \partial \mathcal{T}_H} \\
&+ \|\mathcal{I}^0_H u - P^H_{\partial} u\|_{\partial \mathcal{T}_H} \left (\|(I - P^h_{\partial}) (\boldsymbol{q} \cdot \n)\|_{\partial \mathcal{T}_H} 
            + \tau \|u - P^h_{\partial} u\|_{\partial \mathcal{T}_H} \right ) \\
& + \tau^{\frac{1}{2}}\|P^h_{\partial} u - P^H_{\partial} u \|_{\partial \mathcal{T}_H} \|e_u - \ehatu\|_{\tau,\partial \mathcal{T}_H} 
+ \tau^{-\frac{1}{2}}\|(I - P^h_{\partial})(\boldsymbol{q} \cdot \n)\|_{\partial \mathcal{T}_H} \|e_u - \ehatu\|_{\tau, \partial \mathcal{T}_H}.
\end{align*}
By using Young's inequality and Lemma \ref{bdy_to_interior}, after some algebraic manipulations, we obtain
\begin{align*}
\|\boldsymbol{e}_q\|_{\alpha, \Omega} + \|e_u - \ehatu\|_{\tau, \partial \mathcal{T}_h} & 
\le C_{\alpha} \|\boldsymbol{\delta}_q\| 
+C (H^{t - \frac{1}{2}} L^{-\frac{1}{2}}+  \tau H^t L^{-\frac{1}{2}} )\|u\|_{t+ 1, \Om}\\
& 
   + C\tau h^s L^{-\frac{1}{2}} \|u\|_{s+1, \Om} 
   + C \tau^{-\frac{1}{2}} h^s L^{-\frac{1}{2}}\|\boldsymbol{q}\|_{s+1, \Om},
\end{align*}
for all $0 \le t \le l+1, \; 0 \le s \le k+1$. This completes the proof.
\hfill \end{proof}

As a consequence, by triangle inequality, we immediately have the estimate for $\boldsymbol{q} - \boldsymbol{q}_h$:
\begin{align*}
\|\boldsymbol{q} - \boldsymbol{q}_h\|_{\alpha, \Omega}  &
\le (C_{\alpha} + 1)\|\boldsymbol{q} -\boldsymbol{\Pi}_V \boldsymbol{q}\|
 + C H^{t - \frac{1}{2}} L^{-\frac{1}{2}} \|u\|_{t+1, \Om}\\
& +C \tau H^t L^{-\frac{1}{2}} \|u\|_{t+1, \Om} + C\tau h^s L^{-\frac{1}{2}} \|u\|_{s+1, \Om} + C \tau^{-\frac{1}{2}} h^s L^{-\frac{1}{2}}\|\boldsymbol{q}\|_{s+1, \Om},
\end{align*}
for $1 \le s \le k+1, 1 \le t \le l+1$.

From this estimate we see that we may have various scenarios in choosing the scales $L$ and $H$ and the stabilization 
parameter $\alphaa$. Some of these were discussed in Subsection \ref{ssec:main}
For example, we take $\tau = \mathcal{O}(1)$ and assume $L = \mathcal{O}(1)$, 
then $\|\boldsymbol{e}_q\|_0, \|e_u - \ehatu\|_{\tau, \partial \mathcal{T}_h}$ has the 
order as $\mathcal{O}(h^{k+1} + H^{l+ \frac{1}{2}})$, which is the same as the result in \cite{Arbogast_PWY_07}.

\begin{remark} It important to note that the fact that $\boldsymbol{e}_q \in H_{div}$ is essential
in obtaining an optimal order of convergence. If $\boldsymbol{e}_q$ were not 
$H_{div}$-conforming, then we will have convergence rate 
$\mathcal{O}(h^{-\frac{1}{2}}H^{l+1})$. In the proof, $H_{div}$-conformity of 
the vector field $ \boldsymbol{e}_q$  depends essentially on the fact that $\tau$ is single faced. 
It will be interesting to see what kind of numerical result we have if this assumption is failed.
\end{remark}

\subsection{Estimate for $u - u_h$} \label{estimate for u}

Using a standard elliptic duality argument, we have the following result:
\begin{lemma}\label{duality}
We have
\begin{align}\label{s1-4}
\|e_u\|^2_{\Omega}= \mathbb{S}_1 + \mathbb{S}_2 + \mathbb{S}_3 + \mathbb{S}_4,
\end{align}
where 
\begin{align*}
\mathbb{S}_1 &= - \bint{\alpha \boldsymbol{e}_q}{\boldsymbol{\theta} - 
     \Piv \boldsymbol{\theta}} + \bint{\alpha \boldsymbol{\delta}_q}{ \Piv \boldsymbol{\theta}}, \\
\mathbb{S}_2 & = \bintEh{e_u - \ehatu}{\boldsymbol{\theta} \cdot \n - P^{\partial}_h(\boldsymbol{\theta} \cdot \n)} 
      - \bintEh{\boldsymbol{e}_q \cdot \n - \ehatq \cdot \n}{\phi - P^{\partial}_h \phi} ,\\
\mathbb{S}_3 & = -\bintEh{(P^{\partial}_h - P_M)(\boldsymbol{q} \cdot \n)}{P^{\partial}_h \phi} 
- \bintEh{(P^{\partial}_h - P_M) u }{P^{\partial}_h (\boldsymbol{\theta} \cdot \n}, \\
\mathbb{S}_4 &= - \bintEH{\boldsymbol{e}_q \cdot \n}{\phi - \mathcal{I}_H^0 \phi} 
      + \bintEH{\boldsymbol{e}_q \cdot \n - \ehatq \cdot \n}{\phi - \mathcal{I}_H^0 \phi}.
\end{align*} 
\end{lemma}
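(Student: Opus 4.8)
The proof is a standard elliptic (Aubin--Nitsche) duality argument adapted to the HDG framework, following the technique of \cite{CockburnQiuShi2012}. The plan is: starting from $\|e_u\|^2_\Omega=\bint{\nabla\cdot\boldsymbol{\theta}}{e_u}$ (using \eqref{adjoint-2}), introduce the HDG projection $(\Piv\boldsymbol{\theta},\Piw\phi)$ of the dual solution and test the error equations of Lemma \ref{error_equation} against it. First I would split $\boldsymbol{\theta}=\Piv\boldsymbol{\theta}+(\boldsymbol{\theta}-\Piv\boldsymbol{\theta})$: the projection-error piece is treated by elementwise integration by parts, and since $\nabla e_u\in\nabla W(K)$ the volume term $\bint{\nabla e_u}{\boldsymbol{\theta}-\Piv\boldsymbol{\theta}}$ vanishes by the orthogonality \eqref{HDG projection-2}, leaving only a face contribution $\bintEh{e_u}{(\boldsymbol{\theta}-\Piv\boldsymbol{\theta})\cdot\n}$. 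For the piece containing $\Piv\boldsymbol{\theta}$, I would use the first error equation \eqref{error-equation-1} with $\boldsymbol{v}=\Piv\boldsymbol{\theta}$ to express $\bint{e_u}{\nabla\cdot\Piv\boldsymbol{\theta}}$ through $\boldsymbol{e}_q$, $\ehatu$, the data-projection error $\boldsymbol{\delta}_q$, and the consistency term $\bintEh{(I-P_M)u}{\Piv\boldsymbol{\theta}\cdot\n}$.

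The term $\bint{\alpha\boldsymbol{e}_q}{\Piv\boldsymbol{\theta}}$ that appears is then rewritten, after subtracting $\bint{\alpha\boldsymbol{e}_q}{\boldsymbol{\theta}-\Piv\boldsymbol{\theta}}$, using $\alpha\boldsymbol{\theta}=-\nabla\phi$ from \eqref{adjoint-1} and integration by parts; here Lemma \ref{H_div_conforming}(b) is crucial, since $\nabla\cdot\boldsymbol{e}_q=0$ elementwise makes this collapse to a pure face term $-\bintEh{\boldsymbol{e}_q\cdot\n}{\phi}$. Next I would bring in $\ehatq\cdot\n$ via the second error equation \eqref{error-equation-2} tested with $w=\Piw\phi$ (producing the second consistency term $\bintEh{(I-P_M)(\boldsymbol{q}\cdot\n)}{\Piw\phi}$), and insert the interpolant $\mathcal{I}^0_H\phi$ and the projection $P^h_\partial\phi$ at no cost by testing the third error equation \eqref{error-equation-3} with the admissible multipliers built from them (note $\phi\in H^2\cap H^1_0\subset C(\bar\Omega)$, so $\mathcal{I}^0_H\phi$ is well defined). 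The remaining algebra is driven by two facts: the boundary identity \eqref{bdry_identity} applied to the dual pair, $\Piv\boldsymbol{\theta}\cdot\n=P^h_\partial(\boldsymbol{\theta}\cdot\n)+\tau(P^h_\partial\phi-\Piw\phi)$, which converts $\Piv\boldsymbol{\theta}\cdot\n$-traces into $P^h_\partial(\boldsymbol{\theta}\cdot\n)$-traces plus $\tau$-weighted remainders; and single-valuedness, namely that $\ehatu$ and $\boldsymbol{e}_q\cdot\n$ (Lemma \ref{H_div_conforming}(a),(c)) are single valued on $\mathcal{E}^0_h$ while $\phi$ and $\boldsymbol{\theta}\cdot\n$ are globally continuous, so that interior fine-face contributions telescope and the face sums over $\partial\mathcal{T}_h$ reduce to sums over the coarse skeleton $\partial\mathcal{T}_H$; with $\phi|_{\partial\Omega}=\ehatu|_{\partial\Omega}=0$ the boundary faces drop out.

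Collecting the pieces and using that $P^h_\partial$ and $P_M$ agree on $\mathcal{E}^0_h$ (so $P^h_\partial-P_M$ is supported on $\mathcal{E}_H$), the terms regroup into exactly $\mathbb{S}_1$ (the two volume projection-error terms), $\mathbb{S}_2$ (the two face terms against $\boldsymbol{\theta}\cdot\n-P^h_\partial(\boldsymbol{\theta}\cdot\n)$ and against $\phi-P^h_\partial\phi$, after the $\tau$-weighted remainders from \eqref{bdry_identity} cancel against those produced by \eqref{ehatq}), $\mathbb{S}_3$ (the two commuting/consistency terms, localized to the coarse skeleton), and $\mathbb{S}_4$ (the two terms against $\phi-\mathcal{I}^0_H\phi$ on $\partial\mathcal{T}_H$). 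I expect the bookkeeping in this last step to be the main obstacle: one must track which of $P^h_\partial$, $P^H_\partial$, $P_M$, $\mathcal{I}^0_H$ acts on which face, invoke at the right moments that $\boldsymbol{e}_q\cdot\n\in M_h(F)$ on interior fine faces and that $(I-P_M)$ annihilates precisely the traces it should on the skeleton, and verify that every $\tau$-weighted boundary term cancels so that no spurious $\|e_u-\ehatu\|^2_\tau$ term is left on the left-hand side. The rest is routine integration by parts and rearrangement.
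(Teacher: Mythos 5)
Your proposal is correct and follows essentially the same route as the paper: the duality argument with $(\boldsymbol{v},w)=(\Piv\boldsymbol{\theta},\Piw\phi)$ in the error equations of Lemma \ref{error_equation}, the boundary identity \eqref{bdry_identity} applied to the dual pair so that its $\tau$-weighted remainders cancel against those produced by \eqref{ehatq}, single-valuedness to collapse the fine-face sums to $\partial\mathcal{T}_H$, and insertion of $\mathcal{I}^0_H\phi$ via \eqref{error-equation-3} together with $\phi|_{\partial\Omega}=0$. The one small deviation is that you call Lemma \ref{H_div_conforming}(b) ``crucial'' for killing the volume term involving $\nabla\cdot\boldsymbol{e}_q$, whereas the paper needs only the orthogonality \eqref{HDG projection-1} (since $\nabla\cdot\boldsymbol{e}_q\in\nabla\cdot\boldsymbol{V}(K)$ and $\phi-\Piw\phi$ is orthogonal to that space), so the identity in fact holds under Assumption \ref{assum-inclusions} alone, without the single-faced-$\tau$ hypotheses your route would import.
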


\begin{proof}
We begin by using the second equation \eqref{adjoint-2} of the dual problem to write that
\begin{alignat*}{1}
  \bint{e_u}{e_u}  =&\; \bint{e_u}{\nabla \cdot \boldsymbol{\theta}} \\
      =&\; \bint{e_u}{\nabla \cdot \boldsymbol{\theta}} - 
             \bint{\boldsymbol{e}_q}{\alpha \boldsymbol{\theta}} - \bint{\boldsymbol{e}_q}{\nabla \phi},
\end{alignat*}
{by the first equation \eqref{adjoint-1} of the dual problem. This implies that}
\begin{alignat*}{1}
   \bint{e_u}{e_u} =&\; \bint{e_u}{\nabla \cdot \Piv \boldsymbol{\theta}} 
         - \bint{\alpha \boldsymbol{e}_q}{\Piv \boldsymbol{\theta}} - \bint{\boldsymbol{e}_q}{\nabla \Piw \phi}\\
                   & + \bint{e_u}{\nabla \cdot (\boldsymbol{\theta} - \Piv \boldsymbol{\theta})} 
        - \bint{\alpha \boldsymbol{e}_q}{\boldsymbol{\theta} - \Piv \boldsymbol{\theta}} - \bint{\boldsymbol{e}_q}{\nabla (\phi - \Piw \phi)}.
 \end{alignat*} 
Taking $\boldsymbol{v} := \Piv \boldsymbol{\theta}$ {in the first }error equation, \eqref{error-equation-1},
and $w := \Piw \phi$ in the second, \eqref{error-equation-2}, we obtain
\begin{alignat*}{1}
  \bint{e_u}{e_u} =&\; \bint{\alpha \boldsymbol{\delta}_q }{ \Piv \boldsymbol{\theta}}
  + \bintEh{\ehatu}{ \Piv\boldsymbol{\theta} \cdot \n}
  - \bintEh{\ehatq \cdot \n}{\Piw \phi}\\
                   & + \bint{e_u}{\nabla \cdot (\boldsymbol{\theta} - \Piv \boldsymbol{\theta})} 
                     - \bint{\alpha \boldsymbol{e}_q}{\boldsymbol{\theta} - \Piv \boldsymbol{\theta}} - \bint{\boldsymbol{e}_q}{\nabla (\phi - \Piw \phi)}\\
& + \bintEh{(I - P_M)u}{\Piv \boldsymbol{\theta} \cdot \n} - \bintEh{(I - P_M)(\boldsymbol{q} \cdot \n)}{\Piw \phi}
\end{alignat*}
and, after simple algebraic manipulations we get
\begin{equation}\label{eu-inner-prod}  \bint{e_u}{e_u}
=- \bint{\alpha \boldsymbol{e}_q}{\boldsymbol{\theta} - \Piv \boldsymbol{\theta}} + \bint{\alpha \boldsymbol{\delta}_q}{ \Piv \boldsymbol{\theta}} +\mathbb{T},
\end{equation}
where
\begin{align*}
 \mathbb{T}:= & \bintEh{\ehatu}{ \Piv\boldsymbol{\theta} \cdot \n} - \bintEh{\ehatq \cdot \n}{\Piw \phi}\\
& + \bintEh{(I - P_M)u}{\Piv \boldsymbol{\theta} \cdot \n} - \bintEh{(I - P_M)(\boldsymbol{q} \cdot \n)}{\Piw \phi}\\
                   & + \bint{e_u}{\nabla \cdot (\boldsymbol{\theta} - \Piv \boldsymbol{\theta})} - \bint{\boldsymbol{e}_q}{\nabla (\phi - \Piw \phi)}.\\
\intertext{Integrating by parts for the last two terms and applying the projection properties \eqref{HDG projection-1}, \eqref{HDG projection-2} we have,}
\mathbb{T}=& \bintEh{\ehatu}{ \Piv\boldsymbol{\theta} \cdot \n} - \bintEh{\ehatq \cdot \n}{\Piw \phi}\\
& + \bintEh{(I - P_M)u}{\Piv \boldsymbol{\theta} \cdot \n} - \bintEh{(I - P_M)(\boldsymbol{q} \cdot \n)}{\Piw \phi}\\
                   & + \bintEh{e_u}{(\boldsymbol{\theta} - \Piv \boldsymbol{\theta}) \cdot \n} 
                    - \bintEh{\boldsymbol{e}_q \cdot \n}{\phi - \Piw \phi} : = \mathbb{T}_1 + \mathbb{T}_2,
\end{align*}
where
\begin{align*}
\mathbb{T}_1 := & \bintEh{e_u - \ehatu}{(\boldsymbol{\theta} - \Piv \boldsymbol{\theta}) \cdot \n} - \bintEh{\boldsymbol{e}_q \cdot \n - \ehatq \cdot \n}{\phi - \Piw \phi} \\
\mathbb{T}_2 := & \bintEh{(Id - P_M)u}{\Piv \boldsymbol{\theta} \cdot \n} - \bintEh{(Id - P_M)(\boldsymbol{q} \cdot \n)}{\Piw \phi}\\
& + \bintEh{\ehatu}{\boldsymbol{\theta} \cdot \n} - \bintEh{\ehatq \cdot \n}{\phi}.
\end{align*}
We will estimate $\mathbb{T}_1, \mathbb{T}_2$ separately. First we transform $\mathbb{T}_1$ by adding and subtracting
the terms $  P^{\partial}_h \boldsymbol{\theta} \cdot \n$ and $  P^{\partial}_h \phi$ to get
 \begin{align*}
\mathbb{T}_1
=& \bintEh{e_u - \ehatu}{P^{\partial}_h(\boldsymbol{\theta} \cdot \n) - \Piv \boldsymbol{\theta} \cdot \n} - \bintEh{\boldsymbol{e}_q \cdot \n - \ehatq \cdot \n}{P^{\partial}_h \phi - \Piw \phi} \\
&+ \bintEh{e_u - \ehatu}{\boldsymbol{\theta} \cdot \n - P^{\partial}_h(\boldsymbol{\theta} \cdot \n)} - \bintEh{\boldsymbol{e}_q \cdot \n - \ehatq \cdot \n}{\phi - P^{\partial}_h \phi}. 
\end{align*}
Then using the identity
$\boldsymbol{e}_q \cdot \n - \ehatq \cdot \n = \tau (\Piv u - P^{\partial}_h u)$, a simple consequence of the projection property \eqref{bdry_identity},  and error equation \eqref{ehatq}
we get
\begin{align*}
\mathbb{T}_1
&= \bintEh{e_u - \ehatu}{\boldsymbol{\theta} \cdot \n - P^{\partial}_h(\boldsymbol{\theta} \cdot \n)} - \bintEh{\boldsymbol{e}_q \cdot \n - \ehatq \cdot \n}{\phi - P^{\partial}_h \phi} \\
&-\bintEh{(P^{\partial}_h - P_M)(\boldsymbol{q} \cdot \n)}{P^{\partial}_h \phi - \Piw \phi} 
- \bintEh{(P^{\partial}_h - P_M) u }{\tau(P^{\partial}_h \phi - \Piw \phi)}.
\end{align*}
Next, we transform the expression $\mathbb{T}_2  $ by taking into account that $\bintEh{\ehatu}{\boldsymbol{\theta} \cdot \n} = 0$ and using  
the fact that $\Piw u|_F, \Piv \boldsymbol{q} \cdot \n|_F \in M(F)$ for any $F \in \partial \mathcal{T}_h$:
\begin{align*}
\mathbb{T}_2 & =  \bintEh{(I - P_M)u}{\Piv \boldsymbol{\theta} \cdot \n} 
                            - \bintEh{(I - P_M)(\boldsymbol{q} \cdot \n)}{\Piw \phi} - \bintEh{\ehatq \cdot \n}{\phi} \\
  & = \bintEh{(P^{\partial}_h - P_M)u}{\Piv \boldsymbol{\theta} \cdot \n} 
                            - \bintEh{(P^{\partial}_M - P_M)(\boldsymbol{q} \cdot \n)}{\Piw \phi} - \bintEh{\ehatq \cdot \n}{\phi}. 
\end{align*}
%
Now combining $\mathbb{T}_1, \mathbb{T}_2$  and using the property  \eqref{bdry_identity} of the projections $\Piv$ and $\Piw $, we get
\begin{align*}
\mathbb{T}=
& \bintEh{e_u - \ehatu}{\boldsymbol{\theta} \cdot \n - P^{\partial}_h(\boldsymbol{\theta} \cdot \n)} - \bintEh{\boldsymbol{e}_q \cdot \n - \ehatq \cdot \n}{\phi - P^{\partial}_h \phi} \\
&-\bintEh{(P^{\partial}_h - P_M)(\boldsymbol{q} \cdot \n)}{P^{\partial}_h \phi} 
- \bintEh{(P^{\partial}_h - P_M) u }{P^{\partial}_h (\boldsymbol{\theta} \cdot \n}  - \bintEh{\ehatq \cdot \n}{\phi}.
\end{align*} 

To obtain the final estimate, on each interior face $F \in \mathcal{E}^0_h$, $M_{h,H}|_F = M_h(F)$ and $\ehatq \cdot \n \in M_h(F)$, by \eqref{error-equation-3}, we 
note that  $\ehatq \cdot \n$ is single valued on $F \in \mathcal{E}^0_h$. Then we rewrite the last term as follows:
\begin{align*}
\bintEh{\ehatq \cdot \n}{\phi} & = \bintEH{\ehatq \cdot \n}{\phi}= \bintEH{\ehatq \cdot \n}{\phi - \mathcal{I}_H^0 \phi} \\
& = \bintEH{\boldsymbol{e}_q \cdot \n}{\phi - \mathcal{I}_H^0 \phi} - \bintEH{\boldsymbol{e}_q \cdot \n - \ehatq \cdot \n}{\phi - \mathcal{I}_H^0 \phi},
\end{align*}
where  $ \mathcal{I}_H^0$ is a $ C^0$ Lagrange interpolant defined in \eqref{projections}. At
 the final step we have used the error equation \eqref{error-equation-3} and 
the fact that $\phi|_{\partial \Omega} = \mathcal{I}_H^0 \phi|_{\partial \phi} = 0$

Inserting the above expression into $\mathbb{T}_1 + \mathbb{T}_2$, we finally obtain:
\begin{align*}
\|e_u\|^2_{\Omega}=& - \bint{\alpha \boldsymbol{e}_q}{\boldsymbol{\theta} 
      - \Piv \boldsymbol{\theta}} + \bint{\alpha \boldsymbol{\delta}_q}{ \Piv \boldsymbol{\theta}} \\
&+ \bintEh{e_u - \ehatu}{\boldsymbol{\theta} \cdot \n - P^{\partial}_h(\boldsymbol{\theta} \cdot \n)} 
      - \bintEh{\boldsymbol{e}_q \cdot \n - \ehatq \cdot \n}{\phi - P^{\partial}_h \phi} \\
&-\bintEh{(P^{\partial}_h - P_M)(\boldsymbol{q} \cdot \n)}{P^{\partial}_h \phi} 
- \bintEh{(P^{\partial}_h - P_M) u }{P^{\partial}_h (\boldsymbol{\theta} \cdot \n} \\
& - \bintEH{\boldsymbol{e}_q \cdot \n}{\phi - \mathcal{I}_H^0 \phi} 
      + \bintEH{\boldsymbol{e}_q \cdot \n - \ehatq \cdot \n}{\phi - \mathcal{I}_H^0 \phi},
\end{align*} 
which completes the proof. \hfill
\end{proof}

Notice that in Lemma \ref{duality} we established bounds for the projection errors:
$
\boldsymbol{\theta} \cdot \n - P^{\partial}_h (\boldsymbol{\theta} \cdot \n), \, \phi - P^{\partial}_h \phi$, and $  \phi - \mathcal{I}^0_H \phi.
$
However, here we cannot apply the trace estimates of Lemma \ref{bdy_to_interior} for these terms since 
the solution of the dual problem $(\phi, \boldsymbol{\theta})$ is only in $H^2(\Omega) \times \boldsymbol{H}^1(\Omega)$. 
Alternatively, we will bound these terms based on the following result:
\begin{lemma}\label{dual_trace_inequality}
If the function $(\phi, \boldsymbol{\theta}) \in H^2(T) \times \boldsymbol{H}^1(T)$, then we have
\begin{subequations}\label{dual_trace}
\begin{alignat}{1}
\label{dual_trace_1}
\|\phi - P^{\partial}_h \phi\|_{\partial T} &\le C h^{\frac{3}{2}} \|\phi\|_{2,T}, \\
\label{dual_trace_2}
\|\boldsymbol{\theta}\cdot \n - P^{\partial}_h (\boldsymbol{\theta} \cdot \n)\|_{\partial T} 
    &\le C h^{\frac{1}{2}} \|\boldsymbol{\theta}\|_{1, T},\\
\label{dual_trace_3}
\|\phi - \mathcal{I}^0_H \phi\|_{\partial T} & \le C H^{\frac{3}{2}} \|\phi\|_{2,T}, \\
\label{dual_trace_4}
\|\phi - \mathcal{I}^0_H \phi\|_{\frac{1}{2}, \partial T} & \le C H \|\phi\|_{2,T}.
\end{alignat}
\end{subequations}
\end{lemma}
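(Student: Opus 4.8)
The plan is to establish each of the four estimates by a standard scaling (homogeneity) argument, reducing everything to a fixed reference element. Since the constants are required to be independent of the size of $T$, the natural strategy is to introduce the affine map $F_T : \widehat{T} \to T$ from a reference domain $\widehat{T}$ of unit diameter, pull back the functions, apply the classical approximation estimates on $\widehat{T}$ (Bramble--Hilbert / polynomial approximation, and the standard trace theorem on $\widehat{T}$), and then push the resulting inequalities forward, carefully tracking the powers of $L$ (the diameter of $T$) that the Jacobian and the surface-measure factors produce. In fact most of the work is already in \eqref{standard_estimates} and Lemma \ref{bdy_to_interior}: for \emph{smooth} functions the estimates there are exactly of this type, so the only new point here is that the dual solution $(\phi,\boldsymbol{\theta})$ has low regularity, $H^2(T)\times\boldsymbol{H}^1(T)$, and we must make sure the estimates still hold with only that regularity.

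First I would prove \eqref{dual_trace_1}. On the fine element $K$, the operator $P^\partial_h$ restricted to a face $F\subset\partial K$ is the $L^2(F)$-orthogonal projection onto $M_h(F)\supset P^0(F)$, so in particular $P^\partial_h$ reproduces constants on each face. The standard estimate $\|\phi-P^\partial_h\phi\|_{F}\le C h\,|\phi|_{1,F}$ holds, and the trace inequality \eqref{eq:trace} on $K$ (with $d=h$, size $O(h)$) gives $|\phi|_{1,F}^2 \le C(h\,|\phi|_{2,K}^2 + h^{-1}|\phi|_{1,K}^2)$; summing over the faces of all $K\subset T$ and the $K$'s in $\TT$, and using $h\le H\le L$, yields $\|\phi-P^\partial_h\phi\|_{\partial T}\le C h^{3/2}\|\phi\|_{2,T}$. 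Note that only $\phi\in H^2(T)$ is used, and the quasi-uniformity of $\TT$ is what allows the sum over $K$ to be controlled by the full $H^2(T)$-norm rather than picking up an $L$-dependence. Estimate \eqref{dual_trace_2} is similar but one derivative lower: $\|\boldsymbol{\theta}\cdot\n-P^\partial_h(\boldsymbol{\theta}\cdot\n)\|_{F}\le C\,\min\{\,|\boldsymbol{\theta}\cdot\n|_{0,F},\ h\,|\boldsymbol{\theta}\cdot\n|_{1,F}\,\}$; I would use the $O(h^{1/2})$ estimate coming from $\|v-P^\partial_h v\|_F\le C h^{1/2}\|v\|_{1,K}$ (again by the trace inequality on $K$ applied to $v=\boldsymbol{\theta}\cdot\n$, which is in $H^{1}$ componentwise since $\boldsymbol{\theta}\in\boldsymbol{H}^1$), then sum over $K\subset T$.

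For \eqref{dual_trace_3} and \eqref{dual_trace_4} I would argue on the coarse faces $F_H\in\mathcal{E}_H$ (size $H$) rather than on the fine elements, since $\mathcal{I}^0_H\phi$ lives on the $H$-mesh. By the Sobolev embedding $H^2(T)\hookrightarrow C(\bar T)$ (valid for $n=2,3$) the nodal interpolant $\mathcal{I}^0_H\phi$ is well defined. A Bramble--Hilbert argument on each coarse face patch gives $\|\phi-\mathcal{I}^0_H\phi\|_{t,\partial T}\le C H^{s-t} |\phi|_{s,\partial T}$ for $0\le t\le 1 < s\le 2$, which is \eqref{standard_5} with $s=2$; combining with the trace inequality \eqref{eq:trace} on $T$ (size $L$) to pass from $|\phi|_{s,\partial T}$ to the interior norm $\|\phi\|_{2,T}$, and using $H\le L$, produces \eqref{dual_trace_3} (take $t=0$, $s=2$) and \eqref{dual_trace_4} (take $t=\tfrac12$, $s=2$). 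The main obstacle — and the only genuinely delicate point — is making the constants uniform in $L$ while working with the borderline regularity: one must set up the scaling so that the trace-inequality factors $L^{\pm1/2}$ cancel correctly and so that the fractional-order norm $\|\cdot\|_{1/2,\partial T}$ in \eqref{dual_trace_4}, defined by interpolation, behaves properly under the affine scaling (this is where one really needs $\mathrm{diam}(T)$ only, not finer shape data, and where a fractional trace/interpolation estimate on the reference configuration is invoked). Once the scaling bookkeeping is in place, all four inequalities follow from the corresponding reference-element estimates exactly as the proof of Lemma \ref{bdy_to_interior} does for smooth data, so I would present \eqref{dual_trace_1} in detail and indicate that the remaining three follow by the same device.
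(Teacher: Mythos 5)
There is a genuine gap, and it is precisely the point the paper flags in the sentence preceding the lemma: $(\phi,\boldsymbol{\theta})$ has only $H^2(T)\times\boldsymbol{H}^1(T)$ regularity, so any argument that first uses a boundary-intrinsic approximation estimate and only afterwards traces back into the volume either loses the rate or invokes an uncontrolled norm. Concretely, for \eqref{dual_trace_1} your chain gives
$\|\phi-P^{\partial}_h\phi\|_F^2\le C h^2\,|\phi|_{1,F}^2\le C h^2\bigl(h\,|\phi|_{2,K}^2+h^{-1}|\phi|_{1,K}^2\bigr)=C\bigl(h^3|\phi|_{2,K}^2+h\,|\phi|_{1,K}^2\bigr)$,
and the second term, summed over $K\subset T$, is $h\,|\phi|_{1,T}^2$; so you obtain only $\mathcal{O}(h^{1/2})$, not $\mathcal{O}(h^{3/2})$. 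The trace inequality \eqref{eq:trace} applied to $\nabla\phi$ cannot avoid the $h^{-1/2}\|\nabla\phi\|_K$ contribution (take $\phi$ linear: $\|\nabla\phi\|_{\partial K}\ne 0$ while $|\phi|_{2,K}=0$), and once you have passed to $|\phi|_{1,F}$ the cancellation that would kill this term is gone. The correct mechanism --- and what the paper does --- is to insert a \emph{volume} approximant before invoking the trace inequality: since $W(K)|_F\subset M_h(F)$ and $P^{\partial}_h$ is the $L^2(F)$-minimizer, one has $\|\phi-P^{\partial}_h\phi\|_{\partial K}\le\|\phi-\Pi_h\phi\|_{\partial K}$ with $\Pi_h\phi$ the $L^2(K)$-projection onto $W(K)$, and then \eqref{eq:trace} at scale $h$ applied to the error $\phi-\Pi_h\phi$ gives $h^{-1/2}\cdot h^2+h^{1/2}\cdot h=2h^{3/2}$ times $\|\phi\|_{2,K}$. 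The same remark applies to your justification of \eqref{dual_trace_2}: the raw trace inequality applied to $v=\boldsymbol{\theta}\cdot\n$ gives $h^{-1/2}\|v\|_K+h^{1/2}\|\nabla v\|_K$, not $Ch^{1/2}\|v\|_{1,K}$; you must first subtract the volume mean (or projection), which is legitimate because constants lie in $M_h(F)$.

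The same defect is fatal for your treatment of \eqref{dual_trace_3}--\eqref{dual_trace_4}. Using \eqref{standard_5} with integer $s=2$ requires $|\phi|_{2,\partial T}$, and there is no trace estimate bounding $|\phi|_{2,\partial T}$ by $\|\phi\|_{2,T}$ (that would need $\phi\in H^{5/2+\epsilon}(T)$); this is exactly why the paper states that Lemma \ref{bdy_to_interior} and the estimates \eqref{standard_estimates} cannot be applied to the dual solution. The paper's route is again to build a volume object: it extends the boundary interpolant $\mathcal{I}^0_H\phi$ to an interpolant $\widetilde{\mathcal{I}^0_H}\phi$ on an auxiliary conforming triangulation $\mathcal{T}_H(T)$ of $T$ compatible with $\mathcal{E}_H(T)$, then applies \eqref{eq:trace} at scale $H$ to $\phi-\widetilde{\mathcal{I}^0_H}\phi$ (giving $H^{-1/2}H^2+H^{1/2}H=2H^{3/2}$ for \eqref{dual_trace_3}), and for \eqref{dual_trace_4} uses the quotient-norm characterization $\|g\|_{\frac12,\partial T}=\min\{\|w\|_{1,T}:\,w|_{\partial T}=g\}$ with the competitor $w=\phi-\widetilde{\mathcal{I}^0_H}\phi$, whose $H^1(T)$-norm is $\mathcal{O}(H)\|\phi\|_{2,T}$. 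Your closing remark that a fractional trace estimate on the reference configuration is needed points in the right direction, but as written the proposal never produces the volume approximant whose insertion is the actual content of the proof; without it the claimed rates $h^{3/2}$ and $H^{3/2}$ do not follow.
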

\begin{proof} 
For \eqref{dual_trace_1}, on each $K \in \mathcal{T}_h$, let $\Pi_h \phi$ denotes the 
$L^2-$projection of $\phi$ onto the local space $W(K)$. We have
\begin{align*}
\|\phi - P^{\partial}_h \phi\|_{\partial T} &\le \sum_{K \in \mathcal{T}_h(T)} \|\phi 
   - P^{\partial}_h \phi\|_{\partial K} \le \sum_{K \in \mathcal{T}_h(T)} \|\phi - \Pi_h \phi\|_{\partial K} \\
& \le C \sum_{K \in \mathcal{T}_h(T)}(h^{-\frac{1}{2}} \|\phi - \Pi_h \phi \|_K 
     + h^{\frac{1}{2}} \|\nabla(\phi - \Pi_h \phi)\|_K) \quad \text{by the trace inequality \eqref{eq:trace},} \\
& \le C \sum_{K \in \mathcal{T}_h(T)} h^{\frac{3}{2}}\|\phi\|_{2,K} \le C h^{\frac{3}{2}} \|\phi\|_{2,T}.
\end{align*}
The trace inequality \eqref{dual_trace_2} can be proven in a similar way. 

To prove \eqref{dual_trace_3}, we proceed as follows:
on each subdomain $T$, based on the partition $\mathcal{E}_H(T)$, we can generate a conforming shape-regular 
triangulation $\mathcal{T}_H(T)$. Therefore, we can also extend the boundary interpolation $\mathcal{I}^0_H \phi$ 
to the whole domain $T$, denoted by $\widetilde{\mathcal{I}^0_H} \phi$. We have
\[
\|\phi - \mathcal{I}^0_H \phi\|_{\partial T} \le \sum_{K_H \in \mathcal{T}_H(T)} \|\phi - \widetilde{\mathcal{I}^0_H} \phi\|_{\partial T}\|_{\partial K_H} 
       \le C H^{\frac{3}{2}} \|\phi\|_{2,T}.
\]
In the last step we've applied the same trace inequality \eqref{eq:trace} and the standard interpolation approximation property. 

For the last inequality, by the definition of $\|\cdot\|_{\frac{1}{2}, \partial T}$, we have:
\begin{align*}
\|\phi - \mathcal{I}^0_H \phi\|_{\frac{1}{2}, \partial T} & = \min_{\tilde{w} \in H^1(T), \tilde{w}|_{\partial T} 
       = \phi - \mathcal{I}^c_H \phi } \|w\|_{1,T}  \le \|\phi - \widetilde{\mathcal{I}^c_H} \phi\|_{1,T}  \le C H \|\phi\|_{2,T}.
\end{align*}
This completes the proof.
\hfill \end{proof}

Now we are ready to establish the final estimate for $e_u$.
\begin{theorem}\label{error_u}
Let the assumptions of Theorem \ref{error_q} hold and, in addition, let the local space $W(K)$ 
contain  piecewise linear functions for any $K \in \mathcal{T}_h$, i.e. $k \ge 1$.  Then 
\begin{align*}
\|e_u\|_{\Omega}  \le & \; (C_{\alpha}h+C H)\|\boldsymbol{e}_q\|_{\alpha, \Omega} + C_{\alpha}h \|\boldsymbol{\delta}_q\|_{\alpha, \Omega}
       + C(h^{\frac{1}{2}} \tau^{-\frac{1}{2}}
              +\tau^{\frac{1}{2}}  H^{\frac{3}{2}} )\|e_u - \ehatu\|_{\tau,\partial \mathcal{T}_h} \\
& + C H^{\frac{3}{2}}L^{-\frac{1}{2}}h^s (\|\boldsymbol{q}\|_{s+ 1} + \tau \|u\|_{s+ 1}) 
+ C H^t L^{-\frac{1}{2}} (H^{\frac{3}{2}}\|\boldsymbol{q}\|_{t+1} + (h^{\frac{1}{2}} + \tau H^{\frac{3}{2}})\|u\|_{t+1} ),
\end{align*}
for all $1 < s \le k+1, 0 \le t \le l+1$.
\end{theorem}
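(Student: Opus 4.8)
The plan is to start from the duality decomposition $\|e_u\|^2_{\Omega}=\mathbb{S}_1+\mathbb{S}_2+\mathbb{S}_3+\mathbb{S}_4$ of Lemma~\ref{duality} and to bound each of the four pieces by the product of $\|e_u\|_{\Omega}$ with one of the quantities occurring on the right-hand side of the statement; after invoking the elliptic regularity \eqref{regularity} to replace $\|\phi\|_{2,\Omega}+\|\boldsymbol{\theta}\|_{1,\Omega}$ by $C\|e_u\|_{\Omega}$ and dividing through by $\|e_u\|_{\Omega}$, the asserted estimate for $e_u=\Piw u-u_h$ follows. (The term $\|u-\Piw u\|_{\Omega}$ in Theorem~\ref{estimate_u} is then recovered from the triangle inequality $\|u-u_h\|_{\Omega}\le\|u-\Piw u\|_{\Omega}+\|e_u\|_{\Omega}$.)

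For $\mathbb{S}_1$ I would apply the Cauchy--Schwarz inequality in the weighted inner product to $-\bint{\alpha\boldsymbol{e}_q}{\boldsymbol{\theta}-\Piv\boldsymbol{\theta}}$ and use Lemma~\ref{projection approximation} with $s=1$ together with \eqref{regularity}, which contributes the $C_{\alpha}h\,\|\boldsymbol{e}_q\|_{\alpha,\Omega}$ part of the bound. For $\bint{\alpha\boldsymbol{\delta}_q}{\Piv\boldsymbol{\theta}}$ I would split $\Piv\boldsymbol{\theta}=(\Piv\boldsymbol{\theta}-\boldsymbol{\theta})+\boldsymbol{\theta}$; the first part is handled as above, while for $\bint{\alpha\boldsymbol{\delta}_q}{\boldsymbol{\theta}}=-\bint{\boldsymbol{\delta}_q}{\nabla\phi}$ (by \eqref{adjoint-1}) I would subtract the element-wise mean of $\nabla\phi$ before using Cauchy--Schwarz; this subtraction is legitimate by the orthogonality \eqref{HDG projection-2} precisely because $W(K)$ contains the linear polynomials, and it is the only place where the hypothesis $k\ge1$ enters. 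This gains one power of $h$ and produces the $C_{\alpha}h\,\|\boldsymbol{\delta}_q\|_{\alpha,\Omega}$ contribution.

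The pieces $\mathbb{S}_2$ and $\mathbb{S}_3$ live on $\partial\mathcal{T}_h$; I would estimate them face by face with the weights $\tau^{\pm1/2}$, controlling the dual factors $\phi-P^{\partial}_h\phi$ and $\boldsymbol{\theta}\cdot\n-P^{\partial}_h(\boldsymbol{\theta}\cdot\n)$ by Lemma~\ref{dual_trace_inequality} (Lemma~\ref{bdy_to_interior} is unavailable here, since $(\phi,\boldsymbol{\theta})$ is only $H^2\times\boldsymbol{H}^1$) and the data factors $(I-P^h_{\partial})(\boldsymbol{q}\cdot\n)$, $(P^h_{\partial}-P^H_{\partial})u$ by Lemma~\ref{bdy_to_interior}. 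In $\mathbb{S}_3$ one first notices that $P_M=P^h_{\partial}$ on all interior fine faces, so the sums collapse to $\mathcal{E}_H$; since $(P^h_{\partial}-P^H_{\partial})(\cdot)$ is $L^2(\mathcal{E}_H)$-orthogonal to $M_H$, one may subtract a coarse function (such as $\mathcal{I}^0_H\phi$, resp.\ $P^H_{\partial}(\boldsymbol{\theta}\cdot\n)$) before applying the approximation bounds, which is exactly what produces the $H^{3/2}$- and $h^{1/2}$-factors in the statement. The pairing $\bintEh{\boldsymbol{e}_q\cdot\n-\ehatq\cdot\n}{\phi-P^{\partial}_h\phi}$ is rewritten using $\boldsymbol{e}_q\cdot\n-\ehatq\cdot\n=\tau(\Piv u-P^{\partial}_h u)$, a consequence of \eqref{bdry_identity}, so it is absorbed into $\|e_u-\ehatu\|_{\tau,\partial\mathcal{T}_h}$ up to data terms.

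The delicate piece is $\mathbb{S}_4=-\bintEH{\boldsymbol{e}_q\cdot\n}{\phi-\mathcal{I}^0_H\phi}+\bintEH{\boldsymbol{e}_q\cdot\n-\ehatq\cdot\n}{\phi-\mathcal{I}^0_H\phi}$. For the first term I would invoke Lemma~\ref{H_div_conforming}: on each $T\in\mathcal{P}$ one has $\boldsymbol{e}_q\in\boldsymbol{H}(div,T)$ with $\nabla\cdot\boldsymbol{e}_q=0$, so $\boldsymbol{e}_q\cdot\n$ pairs with $\phi-\mathcal{I}^0_H\phi$ through the $H^{-1/2}(\partial T)$--$H^{1/2}(\partial T)$ duality with $\|\boldsymbol{e}_q\cdot\n\|_{-1/2,\partial T}\le C\|\boldsymbol{e}_q\|_T$, and \eqref{dual_trace_4} then yields the $CH\,\|\boldsymbol{e}_q\|_{\alpha,\Omega}$ contribution. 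For the second term I would expand $\boldsymbol{e}_q\cdot\n-\ehatq\cdot\n$ on $\mathcal{E}_H$ via \eqref{ehatq} into $\tau(e_u-\ehatu)$ plus the projection mismatch $(P^h_{\partial}-P^H_{\partial})(\boldsymbol{q}\cdot\n+\tau u)$, pairing the first against $\phi-\mathcal{I}^0_H\phi$ with the help of \eqref{dual_trace_3} and the second with Lemma~\ref{bdy_to_interior}. Collecting all bounds, applying \eqref{regularity}, and dividing by $\|e_u\|_{\Omega}$ completes the proof. I expect the main obstacle to be exactly this treatment of $\mathbb{S}_4$ (and the coarse-skeleton part of $\mathbb{S}_3$): a naive bound of $\bintEH{\boldsymbol{e}_q\cdot\n}{\cdot}$ costs a negative power $h^{-1/2}$ (cf.\ the remark following Theorem~\ref{error_q}), and it is only the $\boldsymbol{H}(div)$-conformity with vanishing divergence of Lemma~\ref{H_div_conforming}, combined with the $H^{1/2}$-/$H^{3/2}$-interpolation estimates of Lemma~\ref{dual_trace_inequality} tailored to the limited regularity of the dual solution, that keeps the estimate optimal.
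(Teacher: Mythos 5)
Your proposal follows the paper's proof almost step for step: the same duality decomposition from Lemma \ref{duality}; the same mean--subtraction trick for $\mathbb{S}_1$ (where $k\ge 1$ enters through the orthogonality \eqref{HDG projection-2}, exactly as in the paper, which subtracts the elementwise average $\bar{\boldsymbol{\theta}}$ rather than the mean of $\nabla\phi$ --- an equivalent move); the same collapse of the skeleton sums to $\partial\mathcal{T}_H$ and use of Lemma \ref{dual_trace_inequality} for the dual factors in $\mathbb{S}_2$; and the same treatment of $\mathbb{S}_4$ via the $\boldsymbol{H}(div)$-conformity and vanishing divergence of $\boldsymbol{e}_q$ from Lemma \ref{H_div_conforming} together with \eqref{dual_trace_3}--\eqref{dual_trace_4} and the expansion \eqref{ehatq} of $\boldsymbol{e}_q\cdot\n-\ehatq\cdot\n$. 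Those parts are sound and coincide with the paper's argument.

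The one genuine flaw is in your handling of $\mathbb{S}_3$. The orthogonality you invoke is false: for $\mu\in M_H(F)$ one has $\langle (P^h_{\partial}-P^H_{\partial})g,\mu\rangle_F=\langle g,\,P^h_{\partial}\mu-\mu\rangle_F$, which does not vanish because $M_H(F)$ is not contained in the fine trace space, so you may not freely subtract $\mathcal{I}^0_H\phi$ or $P^H_{\partial}(\boldsymbol{\theta}\cdot\n)$. Moreover, even granting the subtraction, $\|P^h_{\partial}(\boldsymbol{\theta}\cdot\n)-P^H_{\partial}(\boldsymbol{\theta}\cdot\n)\|_{\partial\mathcal{T}_H}$ is only $O(H^{\frac12})\|\boldsymbol{\theta}\|_1$ rather than $O(h^{\frac12})\|\boldsymbol{\theta}\|_1$, since $\boldsymbol{\theta}$ is merely $\boldsymbol{H}^1$; your route would therefore produce $H^{t+\frac12}\|u\|_{t+1}$ in place of the claimed $H^{t}h^{\frac12}\|u\|_{t+1}$. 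The correct manipulation, which the paper uses, distributes the projections the other way: since $P^h_{\partial}\phi\in M_h$ and $(P^h_{\partial}-I)g\perp M_h$, one first trades $(P^h_{\partial}-P^H_{\partial})$ for $(I-P^H_{\partial})$; then, because $\boldsymbol{q}\cdot\n$ and $\phi$ (resp.\ $u$ and $\boldsymbol{\theta}\cdot\n$) are single valued across $\mathcal{E}_H$ while the normals flip sign, $\bintEH{(I-P^H_{\partial})(\boldsymbol{q}\cdot\n)}{\phi}=0$, so one may insert $\phi$ itself and arrive at $\bintEH{(I-P^H_{\partial})(\boldsymbol{q}\cdot\n)}{\phi-P^h_{\partial}\phi}$ and $\bintEH{(I-P^H_{\partial})u}{\boldsymbol{\theta}\cdot\n-P^h_{\partial}(\boldsymbol{\theta}\cdot\n)}$. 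In that arrangement the factor $H^tL^{-\frac12}$ comes from the data term via Lemma \ref{bdy_to_interior} while $h^{\frac32}$ and $h^{\frac12}$ come from the dual term via Lemma \ref{dual_trace_inequality}, which is what yields the terms $H^{t}h^{\frac32}L^{-\frac12}\|\boldsymbol{q}\|_{t+1}$ and $H^{t}h^{\frac12}L^{-\frac12}\|u\|_{t+1}$ in the statement.
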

\begin{proof}
We will estimate the terms $\mathbb{S}_j,  \, j=1, \cdots ,4$ in the error-norm representation \eqref{s1-4} separately. By taking 
$\bar{\boldsymbol{\theta}}$ as the average of $\boldsymbol{\theta}$ for each $K \in \mathcal{T}_h$ we get
\begin{align*}
\mathbb{S}_1 = &- \bint{\alpha \boldsymbol{e}_q}{\boldsymbol{\theta} - \Piv \boldsymbol{\theta}} 
               + \bint{\alpha \boldsymbol{\delta}_q}{ \Piv \boldsymbol{\theta}} \\
= &- \bint{\alpha \boldsymbol{e}_q}{\boldsymbol{\theta} - \Piv \boldsymbol{\theta}} 
       - \bint{\alpha \boldsymbol{\delta}_q}{ \boldsymbol{\theta} - \Piv \boldsymbol{\theta}} 
              + \bint{\alpha \boldsymbol{\delta}_q}{ \boldsymbol{\theta} - \bar{\boldsymbol{\theta}}} \\
\le & \; C_{\alpha} \|\boldsymbol{e}_q\|_{\alpha, \Omega} \|\boldsymbol{\theta} - \Piv \boldsymbol{\theta}\|_0 
      + C_{\alpha}\|\boldsymbol{\delta}_q\|_{\alpha,\Omega} \|\boldsymbol{\theta} 
          - \Piv \boldsymbol{\theta}\|_0 + C_{\alpha}\|\boldsymbol{\delta}_q\|_{\alpha, \Omega} \|\boldsymbol{\theta} - \bar{\boldsymbol{\theta}}\|_0 \\
\le & \;  C_{\alpha} h (\|\boldsymbol{e}_q\|_{\alpha, \Omega} + \|\boldsymbol{\delta}_q\|_{\alpha, \Omega} \|\boldsymbol{\theta}\|_1  
\le \;  C_{\alpha} h (\|\boldsymbol{e}_q\|_{\alpha, \Omega} + \|\boldsymbol{\delta}_q\|_{\alpha, \Omega}) \|e_u\|_0.   &&\text{by \eqref{regularity}}
\end{align*}

Next we consider the term $ \mathbb{S}_2$. First using the fact that 
$(e_u - \ehatu)|_F, \boldsymbol{e}_q \cdot \n - \ehatq \cdot \n|_F \in M_h(F), $ for all 
$F \in \partial \mathcal{T}_h, F \cap \mathcal{E}_H = \emptyset$
we transform the integrals over the boundaries of the elements of the fine mesh into 
integrals on the boundaries on the coarse mesh only:
\begin{align*}
\mathbb{S}_2 = & \;\bintEh{e_u - \ehatu}{\boldsymbol{\theta} \cdot \n - P^{\partial}_h(\boldsymbol{\theta} \cdot \n)} 
        - \bintEh{\boldsymbol{e}_q \cdot \n - \ehatq \cdot \n}{\phi - P^{\partial}_h \phi} \\
 = &\;\bintEH{e_u - \ehatu}{\boldsymbol{\theta} \cdot \n - P^{\partial}_h(\boldsymbol{\theta} \cdot \n)} 
        - \bintEH{\boldsymbol{e}_q \cdot \n - \ehatq \cdot \n}{\phi - P^{\partial}_h \phi}.
\\
\intertext{Further, using the approximation property 
\eqref{dual_trace_1}-\eqref{dual_trace_2} and the regularity assumption \eqref{regularity} we get}
\mathbb{S}_2 \le & \;\|e_u - \ehatu\|_{\tau, \partial \mathcal{T}_h} \tau^{-\frac{1}{2}} \|
\boldsymbol{\theta} \cdot \n - P^{\partial}_h(\boldsymbol{\theta} \cdot \n)\|_{\partial \mathcal{T}_H} 
       + \|\boldsymbol{e}_q \cdot \n - \ehatq \cdot \n\|_{\partial \mathcal{T}_H}\| \phi - P^{\partial}_h \phi\|_{\partial \mathcal{T}_H} \\
\le & \; C h^{\frac{1}{2}} \tau^{-\frac{1}{2}}  \|e_u - \ehatu\|_{\tau, \partial \mathcal{T}_h} \|\boldsymbol{\theta}\|_1 
        + C h^{\frac{3}{2}} \|\boldsymbol{e}_q \cdot \n - \ehatq \cdot \n\|_{\partial \mathcal{T}_H}\| \phi\|_{2}\\
\le & \; C h^{\frac{1}{2}} \tau^{-\frac{1}{2}} \|e_u - \ehatu\|_{\tau, \partial \mathcal{T}_h} \|e_u\|_0 
        + C h^{\frac{3}{2}} \|\boldsymbol{e}_q \cdot \n - \ehatq \cdot \n\|_{\partial \mathcal{T}_H}\| e_u\|_0.
\end{align*}

Now we consider $ \mathbb{S}_3$. Using  
 $\bintEH{(I - P^{\partial}_H)(\boldsymbol{q} \cdot \n)}{ \phi} = \bintEH{(I - P^{\partial}_H) u }{ \boldsymbol{\theta} \cdot \n}=0$,
the approximation properties \eqref{PHd}, \eqref{dual_trace_1}, \eqref{dual_trace_2},  and  regularity assumption \eqref{regularity}
we get
\begin{align*}
\mathbb{S}_3 
& = -\bintEh{(I - P_M)(\boldsymbol{q} \cdot \n)}{P^{\partial}_h \phi} 
- \bintEh{(I - P_M) u }{P^{\partial}_h (\boldsymbol{\theta} \cdot \n} \\
& = \bintEH{(I - P^{\partial}_H)(\boldsymbol{q} \cdot \n)}{ \phi - P^{\partial}_h \phi} 
- \bintEH{(I - P^{\partial}_H) u }{ \boldsymbol{\theta} \cdot \n - P^{\partial}_h (\boldsymbol{\theta} \cdot \n)} \\
& \le C H^{t} h^{\frac{3}{2}} L^{-\frac{1}{2}} \|\boldsymbol{q}\|_{t+1} \|\phi\|_2 
     + C H^{t} h^{\frac{1}{2}}L^{-\frac{1}{2}} \|u\|_{t+1} \|\boldsymbol{\theta}\|_1 \\
& \le C H^{t} h^{\frac{3}{2}} L^{-\frac{1}{2}}\|\boldsymbol{q}\|_{t+1} \|e_u\|_0 
     + C H^{t} h^{\frac{1}{2}} L^{-\frac{1}{2}}\|u\|_{t+ 1} \|e_u\|_0,
\end{align*}
for any $0 \le t \le l+1$.

Next, we estimate the last term. 
 Using \eqref{dual_trace_3}, \eqref{dual_trace_4} and the regularity assumption \eqref{regularity}, we get
\begin{align*}
\mathbb{S}_4 &= - \bintEH{\boldsymbol{e}_q \cdot \n}{\phi - \mathcal{I}_H^0 \phi} 
            + \bintEH{\boldsymbol{e}_q \cdot \n - \ehatq \cdot \n}{\phi - \mathcal{I}_H^0 \phi} \\
& \le \sum_{T\in \mathcal{P}}\|\phi - \mathcal{I}_H^0 \phi\|_{\frac{1}{2}, \partial T} (\|\boldsymbol{e}_q\|_{0,T} 
        + \|\nabla \cdot \boldsymbol{e}_q\|_{0,T}) + \|\boldsymbol{e}_q \cdot \n - \ehatq \cdot \n\|_{\partial \mathcal{T}_H} \|\phi 
           - \mathcal{I}_H^0 \phi\|_{\partial \mathcal{T}_H} \\
& \le \sum_{T\in \mathcal{P}}\|\phi - \mathcal{I}_H^0 \phi\|_{\frac{1}{2}, \partial T} \|\boldsymbol{e}_q\|_{0,T} 
        + \|\boldsymbol{e}_q \cdot \n - \ehatq \cdot \n\|_{\partial \mathcal{T}_H} \|\phi - \mathcal{I}_H^0 \phi\|_{\partial \mathcal{T}_H}  
           \quad \text{by Lemma \ref{H_div_conforming},} \\
& \le CH  \left ( \|\boldsymbol{e}_q\|_0  + H^{\frac{1}{2}}\|\boldsymbol{e}_q \cdot \n - \ehatq \cdot \n\|_{\partial \mathcal{T}_h} \right ) \|\phi\|_2\\
& \le CH  \left ( \|\boldsymbol{e}_q\|_0 + H^{\frac{1}{2}}\|\boldsymbol{e}_q \cdot \n - \ehatq \cdot \n\|_{\partial \mathcal{T}_h} \right )\|e_u\|_0.
\end{align*}

Finally, we show how to bound $\|\boldsymbol{e}_q \cdot \n - \ehatq \cdot \n\|_{\partial \mathcal{T}_h} $. 
By Lemma \ref{H_div_conforming}, \ref{bdy_to_interior} and \eqref{ehatq}, we have
\begin{align*}
\|\boldsymbol{e}_q \cdot \n - \ehatq \cdot \n\|_{\partial \mathcal{T}_h} & 
           =\|\boldsymbol{e}_q \cdot \n - \ehatq \cdot \n\|_{\partial \mathcal{T}_H} 
           = \|\tau(e_u -\ehatu) - (P^{\partial}_h - P_M)(\boldsymbol{q} \cdot \n + \tau u)\|_{\partial \mathcal{T}_H}\\
& \le \tau^{\frac{1}{2}}\|e_u - \ehatu\|_{\tau, \partial \mathcal{T}_H} + 
          \|(P^{\partial}_h - P^{\partial}_H)(\boldsymbol{q} \cdot \n + \tau u)\|_{\partial \mathcal{T}_H}\\
& \le \tau^{\frac{1}{2}}\|e_u - \ehatu\|_{\tau, \partial \mathcal{T}_H} + C h^s L^{-\frac{1}{2}}(\|\boldsymbol{q}\|_{s+1} 
            + \tau\|u\|_{s+ 1}) + C H^t L^{-\frac{1}{2}}(\|\boldsymbol{q}\|_{t +1} + \tau\|u\|_{t + 1}),
\end{align*}
for all $1 \le s \le k + 1; 0 \le t \le l+1$. 

Combining the estimates for $\mathbb{S}_1 $ -- $ \mathbb{S}_4$  and  grouping the similar terms we get
\begin{align*}
\|e_u\|_0  \le & \; (C_{\alpha}h+C H)\|\boldsymbol{e}_q\|_{\alpha, \Omega} + C_{\alpha}h \|\boldsymbol{\delta}_q\|_{\alpha, \Omega}
       + C(h^{\frac{1}{2}} \tau^{-\frac{1}{2}} 
              +\tau^{\frac{1}{2}}  H^{\frac{3}{2}} )\|e_u - \ehatu\|_{\tau,\partial \mathcal{T}_h} \\
& + C H^{\frac{3}{2}}L^{-\frac{1}{2}}h^s (\|\boldsymbol{q}\|_{s+ 1} + \tau \|u\|_{s+ 1}) 
+ C H^t L^{-\frac{1}{2}} (H^{\frac{3}{2}}\|\boldsymbol{q}\|_{t+1} + (h^{\frac{1}{2}} + \tau H^{\frac{3}{2}})\|u\|_{t+1} ),
\end{align*}
for all $1 < s \le k+1, 0 \le t \le l+1$.
\end{proof}

\section{Multiscale HDG methods}\label{sec:MS}

In this section, we will consider the problem involving multiscale features. Namely, let us assume that the \emph{permeability} 
coefficient $\alpha$ has two separated scales, 
\begin{equation}\label{separation}
\alpha(x) = \alpha(x, x/\epsilon),
\end{equation}
where $x$ is called the slowly varying variable and $x/\epsilon$ is called the fast varying variable. 
Under this assumption, the exact solution $u, \boldsymbol{q}$ also
has two scales. Therefore, the derivatives of $u, \boldsymbol{q}$ also depends on the small scale $\epsilon$. 
In fact, the exact solution $(u, \boldsymbol{q})$ asymptotically behaves (\cite{JKO94}) as
\[
\|D^k u\|_{\Omega} = \mathcal{O}(\epsilon^{-(k-1)}), \quad \|D^k \boldsymbol{q}\|_{\Omega} = \mathcal{O}(\epsilon^{-k}),
\]
for all $k \ge 1$. Here $D^ku$ and $D^k \boldsymbol{q} $ denote any $k$-th partial derivative of $ u$ and $ \boldsymbol{q}$, respectively.
Then, if we set the nonzero $\tau$ to be constant 1, by Theorem \ref{estimate_q}, the velocity error becomes:
\[
\|\boldsymbol{q} - \boldsymbol{q}_h\|_{\Omega} \le C \left[ \left (\frac{h}{\epsilon} \right )^{s} 
         +  \left (\frac{H}{\epsilon}\right )^{t - \frac{1}{2}} L^{-\frac{1}{2}} \epsilon^{-\frac12} 
         +  \left (\frac{h}{\epsilon}\right )^s L^{-\frac{1}{2}} \epsilon^{-1} \right].
\]

For a multiscale finite element method, the relation between all scales should be $h < \epsilon < H < L$. 
The above estimate is no longer valuable since $\frac{H}{\epsilon} > 1$. The error for $u$ also has similar problem.
In fact, this is a typical drawback for methods using polynomials for both fine and coarse scales.
 If we look at the estimate carefully, we can see the trouble appears on 
the term $\frac{H}{\epsilon}$ only. The scale $H$ is solely associated with the coarse space $M_H$. 
This suggests that we should define the space $M_H$ in a more appropriate way so that its approximation property 
is independent of the scale $\epsilon$. This reasoning has been used by Arbogast and Xiao \cite{Arbogast_Xiao_2013}
to design a mortar multiscale finite element method that overcomes this deficiency of the standard muliscale method.
Their construction is based on the idea of involving the three scales we have used in our considerations.
However, instead of using mortar spaces to glue the approximations on the coarse grid, here we use the
same mechanism that is provided by the hybridization of the discontinuous Galerkin method.

\subsection{Homogenization results}\label{sec:homo}

In a very special case of periodic arrangement of the heterogeneous coefficient we propose to use 
non-polynomial spaces for the Lagrange multipliers that are based on 
the concept of existence of smooth solution of a homogenized problem and using the first order correction from the homogenization theory.

We first review some classical homogenization results. For more details, 
we refer readers to \cite{JKO94,EfendievHou_MSFEM_book}. 
We assume that $\alpha(x, y)$ is periodic in $y$ with the unite cell $\boldsymbol{Y} = [0,1]^d$ as its period. 
The homogenized problem is defined as
\begin{subequations}\label{homogenized_equation}
\begin{alignat}{2}
\alpha_0 \nabla u_0 + \boldsymbol{q}_0 & = 0 \quad && \text{in} \; \Omega, \\
\nabla \cdot \boldsymbol{q}_0 & = f && \text{in} \; \Omega, \\
u_0 &= 0 && \text{on} \; \partial \Omega.
\end{alignat}
\end{subequations}

Here the homogenized tensor $\alpha_0$ is defined as
\[
\alpha_0^{ij} = \int_{\boldsymbol{Y}} \alpha(x,y) (\delta_{ij} + \frac{\partial \chi_j}{\partial y_i})dy, \quad i,j = 1,2, \dots, d.
\]
Here $\chi_j, j = 1,2, \dots, d$ are the periodic
 solutions of the following cell problems: 
\[
\nabla_y \cdot [ \alpha(x,y) (\nabla_y \chi_k(x,y) + \boldsymbol{e}_k)] = 0, \quad \text{in} \; \Omega \times \boldsymbol{Y}, \;\; k = 1,2, \dots, d,
\]
where $\boldsymbol{e}_k$ is the standard unit vector in $\mathbb{R}^d$. 
Then for the first order corrector
$$
u_{\epsilon} := u_0 + \epsilon \boldsymbol{\chi} \cdot \nabla u_0
$$
 with
$\boldsymbol{\chi} = (\chi_1,  \dots, \chi_d)$ we have the following result:
\begin{lemma}\label{homogenization_approximation}
If $u_0 \in H^2(\Om) \cap H^1_0(\Omega)$, then there is some constant $C$ independent of $\epsilon$, such that
\[
\|u - u_{\epsilon}\|_0 \le C \epsilon \|u_0\|_{2}.
\]
Moreover, if $u_0 \in H^2(\Om) \cap H^1_0(\Omega) \cap W^{1, \infty}(\Omega)$, then we have (e.g., \cite{JKO94})
\[
\|\nabla(u - u_{\epsilon})\|_0 \le C (\epsilon \|u_0\|_2 + \sqrt{\epsilon}\|\nabla u_0\|_{\infty}).
\]
\end{lemma}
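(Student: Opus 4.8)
The plan is to use the classical energy method of homogenization (cf.\ \cite{JKO94}). Write $w_\epsilon := u - u_\epsilon$ with $u_\epsilon = u_0 + \epsilon\,\boldsymbol{\chi}(x/\epsilon)\cdot\nabla u_0$. The algebraic core is to insert $u_\epsilon$ into the operator $v\mapsto-\nabla\cdot(\alpha(x,x/\epsilon)\nabla v)$ and verify, using the cell problems $\nabla_y\cdot[\alpha(\nabla_y\chi_k+\boldsymbol{e}_k)]=0$ together with the defining identity for the homogenized tensor $\alpha_0$, that the formally $\mathcal{O}(\epsilon^{-1})$ and $\mathcal{O}(1)$ contributions cancel exactly. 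What remains can be put in divergence form, $-\nabla\cdot(\alpha(x,x/\epsilon)\nabla w_\epsilon)=\nabla\cdot\boldsymbol{r}_\epsilon$ in $\Omega$, where $\boldsymbol{r}_\epsilon$ gathers the terms $\epsilon\,\alpha\,\boldsymbol{\chi}(x/\epsilon)\,D^2 u_0$ and the flux corrector; boundedness of $\alpha$ and of the $1$-periodic functions $\chi_k,\nabla_y\chi_k$ then gives $\|\boldsymbol{r}_\epsilon\|_0\le C\,\epsilon\,\|u_0\|_2$.

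\textbf{$H^1$ estimate.} The obstruction is that $w_\epsilon$ does not vanish on $\partial\Omega$; instead $w_\epsilon|_{\partial\Omega}=-\epsilon\,\boldsymbol{\chi}(x/\epsilon)\cdot\nabla u_0$. I would introduce a boundary-layer cutoff $\eta_\epsilon\in C_0^\infty(\Omega)$ with $0\le\eta_\epsilon\le1$, $\eta_\epsilon\equiv1$ outside an $\mathcal{O}(\epsilon)$-neighbourhood of $\partial\Omega$, $|\nabla\eta_\epsilon|\le C/\epsilon$, and $|\{\eta_\epsilon\ne1\}|\le C\epsilon$ (available since $\Omega$ is polyhedral), and set $v_\epsilon:=u-u_0-\epsilon\,\eta_\epsilon\,\boldsymbol{\chi}(x/\epsilon)\cdot\nabla u_0\in H_0^1(\Omega)$. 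Testing the weak form of the primal problem against $v_\epsilon$ and using the identity from the first step, the form $\int_\Omega\alpha(x,x/\epsilon)\,\nabla v_\epsilon\cdot\nabla v_\epsilon\,dx$ is bounded by $\|\boldsymbol{r}_\epsilon\|_0\,\|\nabla v_\epsilon\|_0$ plus the contribution of $\epsilon\,\nabla\big((1-\eta_\epsilon)\,\boldsymbol{\chi}(x/\epsilon)\cdot\nabla u_0\big)$; the latter is supported in a strip of measure $\mathcal{O}(\epsilon)$, and since differentiating the cutoff produces a factor $1/\epsilon$ it is controlled by $C\sqrt\epsilon\,\|\nabla u_0\|_\infty$ --- this is where the $W^{1,\infty}$ hypothesis enters. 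Coercivity, from $\alpha\ge\alpha_*>0$, then yields $\|\nabla v_\epsilon\|_0\le C(\epsilon\|u_0\|_2+\sqrt\epsilon\,\|\nabla u_0\|_\infty)$, and adding back $\epsilon\,\nabla(\eta_\epsilon\,\boldsymbol{\chi}(x/\epsilon)\cdot\nabla u_0)$, whose $L^2$ norm is of the same order, gives the stated bound for $\|\nabla(u-u_\epsilon)\|_0$.

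\textbf{$L^2$ estimate.} For the first inequality it is enough to combine the triangle inequality $\|u-u_\epsilon\|_0\le\|u-u_0\|_0+\epsilon\,\|\boldsymbol{\chi}(x/\epsilon)\cdot\nabla u_0\|_0$ with the standard bound $\|u-u_0\|_0\le C\,\epsilon\,\|u_0\|_2$. The latter I would obtain by Aubin--Nitsche duality: solve the adjoint problem $-\nabla\cdot(\alpha_0\nabla\psi)=u-u_0$ with $\psi|_{\partial\Omega}=0$ (full $H^2$-regularity on the polyhedral domain; note $\alpha_0$ is symmetric since $\alpha$ is scalar, so the adjoint operator is $\alpha_0$ itself), pair $u-u_0$ with $-\nabla\cdot(\alpha_0\nabla\psi)$, integrate by parts, and use the $H^1$ corrector estimate together with the fact that the oscillatory factor $\alpha(x,x/\epsilon)-\alpha_0$, tested against smooth functions, gains a power of $\epsilon$. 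No $W^{1,\infty}$ assumption is needed here because the $L^2$ duality does not feel the $\mathcal{O}(\epsilon)$-thin boundary layer at leading order.

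\textbf{Main obstacle.} The only genuine difficulty is the boundary-layer bookkeeping in the $H^1$ estimate: in the interior $u_\epsilon$ is an $\mathcal{O}(\epsilon)$ bounded-coefficient perturbation of $u$, but its $\mathcal{O}(\epsilon)$ trace on $\partial\Omega$ must be removed by a cutoff, and the interplay $|\{\eta_\epsilon\ne1\}|=\mathcal{O}(\epsilon)$ versus $\nabla\eta_\epsilon=\mathcal{O}(1/\epsilon)$ is precisely what produces the unavoidable $\sqrt\epsilon$; one has to keep the powers straight so that nothing worse than $\sqrt\epsilon$ appears.
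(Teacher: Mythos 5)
The paper does not actually prove this lemma: it is quoted as a classical homogenization result with only the citation to \cite{JKO94}, and no argument appears in the text. Your sketch is the standard energy-method proof that underlies that reference, and its skeleton is sound: the cell problems kill the $\mathcal{O}(\epsilon^{-1})$ terms; the cutoff $\eta_\epsilon$ with $|\nabla\eta_\epsilon|\le C/\epsilon$ supported on a strip of measure $\mathcal{O}(\epsilon)$ is exactly what produces the $\sqrt{\epsilon}\,\|\nabla u_0\|_\infty$ contribution; and the $L^2$ bound follows from the triangle inequality once $\|u-u_0\|_0\le C\epsilon\|u_0\|_2$ is in hand. Two steps are glossed over more than they should be. First, the $\mathcal{O}(1)$ contributions do not ``cancel exactly'': after the cell problems are used one is left with $(B(x/\epsilon)-\alpha_0)\colon D^2u_0$, where $B=\alpha(I+\nabla_y\boldsymbol{\chi})$ is $y$-divergence-free with mean $\alpha_0$; this term is $\mathcal{O}(1)$ in $L^2$, and it becomes $\epsilon\,\nabla\cdot(\cdot)$ only after introducing the skew-symmetric potential $\Phi$ with $B_{ij}-\alpha_0^{ij}=\partial_{y_k}\Phi_{kij}$ and using the skew-symmetry to annihilate the third-derivative term (which is what lets $u_0\in H^2$ suffice). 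Your mention of ``the flux corrector'' presumably points at $\Phi$, but this is the one genuinely nontrivial algebraic step and needs to be written out. Second, the estimate $\|u-u_0\|_0\le C\epsilon\|u_0\|_2$ that you feed into the triangle inequality is itself a separate classical theorem; in its duality proof the two boundary layers (primal and dual correctors) pair so that the two $\sqrt{\epsilon}$ factors multiply to $\epsilon$, and your one-sentence description of this is too loose to stand as a proof, though the mechanism you name is the right one. Neither point is a wrong turn --- both are supplied by \cite{JKO94} --- so the proposal is a correct outline rather than a complete argument, which is consistent with the paper's own choice to cite the result rather than prove it.
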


\subsection{A multiscale coarse space $M_H$}

Using the above basic results form homogenization of heterogeneous differential operators we shall design our 
multiscale method. As before, we introduce finite element partitioning of the domain. In this setting we 
assume that the partitions are such that
\begin{equation}\label{scales}
h < \epsilon << H < L \le 1.
\end{equation}
In this section we shall use the same polynomial spaces ${\bf V}(K)$ and $W(K)$ as before. The difference will be in the
choice of the coarse space $M_H$. Here we shall follow the work of Arbogast and Xiao \cite{Arbogast_Xiao_2013},
where this construction was used for the mortar finite element method.

For each $F \in \mathcal{E}_H$, let $\bar{F}$ denotes a rectangular neighborhood of $F$, we define the local space as
\begin{equation}\label{Lagrange-MS}
M_H(F) := \{ \mu \in L^2(F)\; : \; \mu = (1 + \epsilon \boldsymbol{\chi} \cdot \nabla) p |_F, \text{for} \; p \in P^l(\bar{F})\}.
\end{equation}
Notice that, the local space $M_H(F)$ involves both, local cell solutions $  \boldsymbol{\chi} $
 and polynomial space $P(\bar{F})$. Therefore, its dimension is larger than $P^l(F)$. 
Simple considerations show that  its dimension will depend on the structure of $ \boldsymbol{\chi} $  and will between $2l$  and $3l$.

The coarse space $M_H$ is then defined as:
\[
M_H := \{ \mu \in L^2(\mathcal{E}_{h,H}) \; : \; \text{for} \; F \in \mathcal{E}_H, 
                 \; \mu|_F \in M_H(F), \quad \text{and} \; \mu|_{\mathcal{E}^0_h \cup \partial \Omega} = 0\} 
\]
On each $F \in \mathcal{E}_H$, we define the following projection of $u_\epsilon$ on $M_H $:
\[
x \in F \in \mathcal{E}_H: \qquad
\mathcal{I} u_{\epsilon}(x) = (1 + \epsilon \boldsymbol{\chi} \cdot \nabla) \mathcal{I}_H u_0(x).
\]
Here $\mathcal{I}_H u_0$ is defined on $\bar F$ as the orthogonal 
$L^2$-projection of $u_0$ into $P^l(\bar{F})$. It has the following standard approximation property
for $1 \le t \le l+1$ and $ F \in  \mathcal{E}_H$:
\[
\|u_0 - \mathcal{I}_H u_0\|_{r,\bar F} \le C H^{t - r} \|u_0\|_{t,\bar F},
\]
for all $0 \le r \le t \le l+1$.
Also, for any function $\xi \in H^1(\bar{F})$, we have the following two trace inequalities:
$$ 
\|\phi\|_{0, F} \le H^{-\frac12}\|\xi\|_{0,\bar F} + H^{\frac12}\|\nabla \xi\|_{0, \bar F}, 
\quad  \|\phi\|_{\frac12, F}  \le H^{-1} \|\xi\|_{0, \bar F} + \|\nabla \xi\|_{0, \bar F}.
$$ 
The above two inequalities can be obtained by a simple scaling argument. Combining the trace 
inequalities and the approximation property of the interpolation, we get the following estimates:
\begin{subequations}\label{standard_6}
\begin{alignat}{1}
\|u_0 - \mathcal{I}_H u_0\|_{r, F} &\le C H^{t -\frac12 - r} \|u_0\|_{t,\bar F},   \label{standard_6a}  \\
\|\nabla( u_0 - \mathcal{I}_H u_0)\|_{r,F} &\le C H^{t-\frac32 - r}\|u_0\|_{t,\bar F}, \label{standard_6b} 
\end{alignat}
\end{subequations}

After summation over the faces $ F \in  \partial \mathcal{T}_H$  these estimates produce the following bounds:
\begin{subequations}\label{standard_7}
\begin{alignat}{1}
\|u_0 - \mathcal{I}_H u_0\|_{r, \partial \mathcal{T}_H} &\le C H^{t -\frac12 - r} \|u_0\|_{t},   \label{standard_7a}  \\
\|\nabla( u_0 - \mathcal{I}_H u_0)\|_{r, \partial \mathcal{T}_H} &\le C H^{t-\frac32 - r}\|u_0\|_{t}, \label{standard_7b} 
\end{alignat}
\end{subequations}
for all $1 \le t \le l+1, r = 0, \frac12$.

We have the following approximation result:
\begin{lemma}\label{L2Interpolant}
Let  \eqref{separation} hold and let the homogenized solution be sufficiently
smooth, i.e. $u_0 \in H^t(\Omega)$. Then for $1  \le t \le l+1$
\[
\|u - P^H_{\partial} u\|_{\partial \mathcal{T}_H}   \le \|u - \mathcal{I} u_{\epsilon}\|_{\partial \mathcal{T}_H} 
        \le C (\epsilon  L^{-\frac{1}{2}}\|u_0\|_2 + \sqrt{\epsilon} L^{-\frac{1}{2}}\|\nabla u_0\|_{\infty} +  H^{t - \frac12} \|u_0\|_{t}).
\]
\end{lemma}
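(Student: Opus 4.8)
The plan is to prove the two inequalities separately, and to handle the second one by the splitting $u-\mathcal{I}u_\epsilon = (u-u_\epsilon)+(u_\epsilon-\mathcal{I}u_\epsilon)$ into a homogenization error and an interpolation error.

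For the first inequality, recall from \eqref{projections} that on each coarse face $F\in\mathcal{E}_H$ the operator $P^H_{\partial}$ is the $L^2(F)$-orthogonal projection onto $M_H(F)$. By the construction \eqref{Lagrange-MS} and the definition of $\mathcal{I}u_\epsilon$, the restriction $\mathcal{I}u_\epsilon|_F = (1+\epsilon\boldsymbol{\chi}\cdot\nabla)\mathcal{I}_H u_0$ lies in $M_H(F)$; hence the best-approximation property of the $L^2$-projection gives $\|u-P^H_{\partial}u\|_F \le \|u-\mathcal{I}u_\epsilon\|_F$ for every such $F$, and squaring and summing over the faces of $\partial\mathcal{T}_H$ yields $\|u-P^H_{\partial}u\|_{\partial\mathcal{T}_H}\le\|u-\mathcal{I}u_\epsilon\|_{\partial\mathcal{T}_H}$. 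It remains to estimate the latter quantity.

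For the homogenization error $u-u_\epsilon$ we have no direct estimate on the skeleton, so I would first pass to interior norms: on each subdomain $T\in\mathcal{P}$ the trace inequality \eqref{eq:trace} with $d=L$ gives $\|u-u_\epsilon\|_{\partial T}^2 \le C\bigl(L\,\|\nabla(u-u_\epsilon)\|_T^2 + L^{-1}\|u-u_\epsilon\|_T^2\bigr)$, and summing over $T\in\mathcal{P}$ collapses the right-hand side to the global norms $\|\nabla(u-u_\epsilon)\|_\Omega$ and $\|u-u_\epsilon\|_\Omega$. Lemma \ref{homogenization_approximation} bounds these by $C(\epsilon\|u_0\|_2+\sqrt{\epsilon}\|\nabla u_0\|_\infty)$ and $C\epsilon\|u_0\|_2$ respectively; using the standing assumption $L\le 1$ to replace the factor $L^{1/2}$ by $L^{-1/2}$, this produces the first two terms of the claimed bound. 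For the interpolation error, note that on each $F\in\mathcal{E}_H$ one has $u_\epsilon-\mathcal{I}u_\epsilon = (1+\epsilon\boldsymbol{\chi}\cdot\nabla)(u_0-\mathcal{I}_H u_0)$, so the classical boundedness $\boldsymbol{\chi}\in L^\infty$ of the cell correctors gives $\|u_\epsilon-\mathcal{I}u_\epsilon\|_{\partial\mathcal{T}_H}\le \|u_0-\mathcal{I}_H u_0\|_{\partial\mathcal{T}_H}+C\epsilon\|\nabla(u_0-\mathcal{I}_H u_0)\|_{\partial\mathcal{T}_H}$. Applying \eqref{standard_7a} and \eqref{standard_7b} bounds these by $CH^{t-\frac12}\|u_0\|_t$ and $C\epsilon H^{t-\frac32}\|u_0\|_t$, and since $\epsilon\ll H$ by \eqref{scales} the second term is dominated by the first, leaving $CH^{t-\frac12}\|u_0\|_t$. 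Adding the two contributions finishes the proof.

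The step requiring the most care is the treatment of $u-u_\epsilon$: Lemma \ref{homogenization_approximation} is an $L^2(\Omega)$/$H^1(\Omega)$ statement, not a skeleton estimate, so one must route through the trace inequality and then track the powers of $L$ it introduces — which are controlled only because $L\le 1$. The remaining ingredients (the best-approximation property of $P^H_{\partial}$, the algebraic structure of $M_H(F)$, boundedness of $\boldsymbol{\chi}$, and the interpolation estimates \eqref{standard_7a}–\eqref{standard_7b}) are routine.
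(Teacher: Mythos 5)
Your proposal is correct and follows essentially the same route as the paper's proof: the best-approximation property of $P^H_{\partial}$ for the first inequality, the triangle-inequality split into $u-u_\epsilon$ and $u_\epsilon-\mathcal{I}u_\epsilon$, the trace inequality \eqref{eq:trace} combined with Lemma \ref{homogenization_approximation} for the homogenization error, and the boundedness of $\boldsymbol{\chi}$ together with \eqref{standard_7a}--\eqref{standard_7b} and $\epsilon\le H$ for the interpolation error. Your write-up merely makes explicit two steps the paper leaves implicit (that $\mathcal{I}u_\epsilon|_F\in M_H(F)$, and that $L\le 1$ is what turns the trace bound into the $L^{-1/2}\|\cdot\|_{1,\Omega}$ factor).
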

\begin{proof}
The bound $\|u - P^H_{\partial} u\|_{\partial \mathcal{T}_H}  \le \|u - \mathcal{I} u_{\epsilon}\|_{\partial \mathcal{T}_H}$
is obvious, since $  P^H_{\partial}$ is an orthogonal projection on $M_H$.
Then by the  triangle inequality, we have
\[
\|u - \mathcal{I} u_{\epsilon}\|_{\partial \mathcal{T}_H} \le \|u - u_{\epsilon}\|_{\partial \mathcal{T}_H} 
                  + \|u_{\epsilon} - \mathcal{I} u_{\epsilon}\|_{\partial \mathcal{T}_H}. 
\]
Now we estimate the two terms on the right hand side separately. By the trace inequality \eqref{eq:trace} and  
 the approximation property of homogenized solution established in  Lemma \ref{homogenization_approximation}, we have
\begin{align*}
\|u - u_{\epsilon}\|_{\partial \mathcal{T}_H}
        \le CL^{-\frac{1}{2}} \|u - u_{\epsilon}\|_{1, \Omega} 
& \le C L^{-\frac{1}{2}} (\epsilon \|u_0\|_2 + \sqrt{\epsilon}\|\nabla u_0\|_{\infty}).
\end{align*}
The second term is bounded by using the approximation property \eqref{standard_7} in the following manner:
\begin{align*}
\|u_{\epsilon} - \mathcal{I} u_{\epsilon}\|_{\partial \mathcal{T}_H} &= \|(1 + \epsilon \boldsymbol{\chi} \cdot \nabla) (u_0 - \mathcal{I}_H u_0)\|_{\partial \mathcal{T}_H} \\
& \le  \| u_0 - \mathcal{I}_H u_0\|_{\partial \mathcal{T}_H} +  \epsilon \|\boldsymbol{\chi}\|_{\infty} \|\nabla(u_0 - \mathcal{I}_H u_0)\|_{\partial \mathcal{T}_H} \\
& \le C H^{t-\frac12} \|u_0\|_{t} + C \epsilon H^{t-\frac32} \|u_0\|_{t}   \le C H^{t-\frac12} \|u_0\|_{t},
\end{align*}
for $1 \le t \le l+1$.
This completes the proof. \hfill \end{proof}

We can prove  a better estimate for smoother $u_0$. To get such a result we need an additional assumption on the space $M_H$:
\begin{assumption}\label{C1_conforming}
The space $P^l(\bar{\mathcal{E}}_H) = \cup_{F \in \mathcal{E}_H} P^l(\bar{F})$ has a subspace which provides 
an approximation of order $\mathcal{O}(H^{l+1})$ for smooth $u_0$ and the restriction on 
$\mathcal{E}_H: P^l(\bar{\mathcal{E}}_H)|_{\mathcal{E}_H} \subset M_H$ is $C^1-$conforming over the coarse skeleton $\mathcal{E}_H$.
\end{assumption}

If this assumption holds, we can define the interpolation of $u_{\epsilon}$ to be:
\[
\mathcal{I}^1 u_{\epsilon} := (1 + \epsilon \boldsymbol{\chi} \cdot \nabla) \mathcal{I}^1_H u_0,
\]
where $\mathcal{I}^1_H u_0$ is the $C^1$-interpolation of $u_0$ onto $P^l(\bar{\mathcal{E}}_H)$. 
Under this assumption, the interpolation $\mathcal{I}^1_H u_0$ has the same approximation property \eqref{standard_7} as $\mathcal{I}_H u_0$.

In order to improve our estimates, we will need the following approximation result, 
\begin{lemma}\label{interpolation_approximation}
In addition to Assumption \ref{C1_conforming}, assume  the  homogenized solution of \eqref{homogenized_equation} $u_0 $ belongs to 
$ H^{l+1}(\Omega) \cap H^2(\Omega) \cap W^{1, \infty}(\Omega)$.  Then on  each subdomain 
$T \subset \Omega$ and $1 \le t \le l+1$   we have
\[
\langle u - \mathcal{I}^1  u_{\epsilon} \,,\, \boldsymbol{e}_q \cdot \n \rangle_{\partial T} 
\le C (H^{t-2} \|u_0\|_{t, T} + \epsilon \|u_0\|_{2,T} + \sqrt{\epsilon}\|\nabla u_0\|_{\infty, T}) \|\boldsymbol{e}_q\|_{T}.
\]
\end{lemma}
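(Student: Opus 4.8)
The plan is to exploit the $\boldsymbol{H}(\mathrm{div})$-conformity of $\boldsymbol{e}_q$ on each subdomain provided by Lemma~\ref{H_div_conforming}. Fix $T\in\mathcal{P}$; then $\boldsymbol{e}_q\in\boldsymbol{H}(\mathrm{div},T)$ and $\nabla\cdot\boldsymbol{e}_q=0$ in $T$. Consequently, for \emph{any} $w\in H^1(T)$ whose trace on $\partial T$ equals $u-\mathcal{I}^1 u_\epsilon$, Green's formula (with the boundary term understood in the $H^{1/2}$--$H^{-1/2}$ duality) gives
\begin{align*}
\langle u-\mathcal{I}^1 u_\epsilon\,,\,\boldsymbol{e}_q\cdot\n\rangle_{\partial T}
&= (\nabla w\,,\,\boldsymbol{e}_q)_T + (w\,,\,\nabla\cdot\boldsymbol{e}_q)_T \\
&= (\nabla w\,,\,\boldsymbol{e}_q)_T \;\le\; \|\nabla w\|_T\,\|\boldsymbol{e}_q\|_T .
\end{align*}
So the whole statement reduces to exhibiting one admissible extension $w$ of the boundary datum with $\|\nabla w\|_T\le C\big(H^{t-2}\|u_0\|_{t,T}+\epsilon\|u_0\|_{2,T}+\sqrt{\epsilon}\,\|\nabla u_0\|_{\infty,T}\big)$.

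To construct $w$ I would invoke Assumption~\ref{C1_conforming}. Since $P^l(\bar{\mathcal{E}}_H)|_{\mathcal{E}_H}\subset M_H$ is $C^1$-conforming over the coarse skeleton, the boundary interpolant $\mathcal{I}^1_H u_0$ extends to a piecewise-polynomial function $\widetilde{\mathcal{I}^1_H}u_0$ on a shape-regular triangulation $\mathcal{T}_H(T)$ generated from $\mathcal{E}_H(T)$ --- exactly as in the proof of Lemma~\ref{dual_trace_inequality} for \eqref{dual_trace_3} --- with $\widetilde{\mathcal{I}^1_H}u_0$ and its first-order derivatives agreeing with those of $\mathcal{I}^1_H u_0$ on $\partial T$, and with the standard approximation bounds $\|u_0-\widetilde{\mathcal{I}^1_H}u_0\|_{r,T}\le C H^{t-r}\|u_0\|_{t,T}$ for $0\le r\le 2$ (the substantive range being $2\le t\le l+1$; for $t=1$ the estimate is weaker than what elementary stability bounds already give). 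Because $\mathcal{I}^1 u_\epsilon=(1+\epsilon\boldsymbol{\chi}\cdot\nabla)\mathcal{I}^1_H u_0$ on each face of $\mathcal{E}_H$, the matching of value and gradient on $\partial T$ guarantees that $w:=u-(1+\epsilon\boldsymbol{\chi}\cdot\nabla)\widetilde{\mathcal{I}^1_H}u_0\in H^1(T)$ has the correct trace $u-\mathcal{I}^1 u_\epsilon$ on $\partial T$.

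It then remains to estimate $\|\nabla w\|_T$. Write $w=(u-u_\epsilon)+(1+\epsilon\boldsymbol{\chi}\cdot\nabla)\psi$ with $\psi:=u_0-\widetilde{\mathcal{I}^1_H}u_0$. The first summand is controlled by Lemma~\ref{homogenization_approximation}: $\|\nabla(u-u_\epsilon)\|_T\le C(\epsilon\|u_0\|_{2,T}+\sqrt{\epsilon}\,\|\nabla u_0\|_{\infty,T})$. For the second, differentiating $\boldsymbol{\chi}(x,x/\epsilon)$ in $x$ produces an $\epsilon^{-1}$ from the fast variable that cancels the prefactor $\epsilon$; schematically,
\[
\nabla\!\big[(1+\epsilon\boldsymbol{\chi}\cdot\nabla)\psi\big]
=\nabla\psi+\epsilon\,(\nabla_x\boldsymbol{\chi})\!\cdot\!\nabla\psi+(\nabla_y\boldsymbol{\chi})\!\cdot\!\nabla\psi+\epsilon\,\boldsymbol{\chi}\!\cdot\!\nabla^2\psi ,
\]
the middle two terms being the appropriate contractions. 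Using the uniform $L^\infty$ bounds on $\boldsymbol{\chi}$, $\nabla_x\boldsymbol{\chi}$, $\nabla_y\boldsymbol{\chi}$ (a standard consequence of the regularity of the cell problems when $\alpha(x,\cdot)$ is smooth enough), the approximation bounds $\|\psi\|_{1,T}\le CH^{t-1}\|u_0\|_{t,T}$ and $\|\psi\|_{2,T}\le CH^{t-2}\|u_0\|_{t,T}$, together with $\epsilon<H\le 1$ (so that $H^{t-1}\le H^{t-2}$ and $\epsilon H^{t-2}\le H^{t-1}$), we obtain $\|\nabla[(1+\epsilon\boldsymbol{\chi}\cdot\nabla)\psi]\|_T\le CH^{t-2}\|u_0\|_{t,T}$. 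Summing the two contributions and multiplying by $\|\boldsymbol{e}_q\|_T$ gives the asserted bound.

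The main obstacle will be the term $\epsilon\,\boldsymbol{\chi}\cdot\nabla^2\psi$: it forces a \emph{second-order} approximation estimate for the extended interpolant, which is exactly why the $C^1$-conformity hypothesis in Assumption~\ref{C1_conforming} is indispensable --- it is what lets us extend $\mathcal{I}^1_H u_0$ inside $T$ as an $H^2(T)$ function whose trace \emph{and} normal derivative still match $\mathcal{I}^1 u_\epsilon$ on $\partial T$, and it is also why the useful range is $t\ge 2$. A secondary delicate point is the term $(\nabla_y\boldsymbol{\chi})\cdot\nabla\psi$, where the $\epsilon^{-1}$ coming from the fast scale is absorbed only because $\nabla_y\boldsymbol{\chi}\in L^\infty$; this is the place where the smoothness assumptions on the periodic coefficient $\alpha$, and hence on the correctors $\chi_j$, genuinely enter.
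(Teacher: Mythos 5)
Your proposal is correct in substance and rests on the same three pillars as the paper's proof --- the splitting $u-\mathcal{I}^1 u_\epsilon=(u-u_\epsilon)+(u_\epsilon-\mathcal{I}^1 u_\epsilon)$, the divergence-freeness of $\boldsymbol{e}_q$ from Lemma~\ref{H_div_conforming}, and the cancellation of $\|\nabla\boldsymbol{\chi}\|_{\infty}=\mathcal{O}(\epsilon^{-1})$ against the prefactor $\epsilon$ --- but it realizes the estimate of the interpolation-error term differently. The paper stays on the boundary: it writes $u_\epsilon-\mathcal{I}^1 u_\epsilon=(u_0-\mathcal{I}^1_H u_0)+\epsilon\,\boldsymbol{\chi}\cdot\nabla(u_0-\mathcal{I}^1_H u_0)$ and pairs the first summand against $\boldsymbol{e}_q\cdot\n$ and the second against $(\boldsymbol{\chi}\otimes\boldsymbol{e}_q)\cdot\n$ via the $H^{1/2}(\partial T)$--$H^{-1/2}(\partial T)$ duality, bounding $\|\boldsymbol{\chi}\otimes\boldsymbol{e}_q\|_{H(div),T}$ by $(\|\boldsymbol{\chi}\|_{\infty}+\|\nabla\boldsymbol{\chi}\|_{\infty})\|\boldsymbol{e}_q\|_T$ and the boundary norms by \eqref{standard_7} with $r=\tfrac12$; this produces exactly the term you see as $(\nabla_y\boldsymbol{\chi})\cdot\nabla\psi$. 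Your route instead builds a single $H^1(T)$ extension $w$ of the boundary datum and uses $\langle u-\mathcal{I}^1 u_\epsilon,\boldsymbol{e}_q\cdot\n\rangle_{\partial T}\le\|\nabla w\|_T\|\boldsymbol{e}_q\|_T$, which is cleaner as a single inequality but transfers the burden to the construction you yourself flag: a globally $H^2(T)$-conforming extension $\widetilde{\mathcal{I}^1_H}u_0$ that matches $\mathcal{I}^1_H u_0$ \emph{and its full gradient} on $\partial T$ while retaining optimal interior approximation up to second derivatives. Assumption~\ref{C1_conforming} and the $C^0$ extension used for \eqref{dual_trace_3} do not literally supply this object, so the paper's boundary-based argument, which needs only the face-neighborhood estimates \eqref{standard_6}--\eqref{standard_7}, is the technically lighter path; on the other hand the $H^{1/2}(\partial T)$ norm is itself defined through minimal extensions, so the two arguments are of comparable rigor, and your term-by-term accounting (including the remark that the corrector terms actually come out as $\mathcal{O}(H^{t-1})$, which is then absorbed into $H^{t-2}$) is consistent with the paper's final bound.
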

\begin{proof}
Similar as in the proof of Lemma \ref{L2Interpolant}, we begin by splitting the term as:
\begin{align*}
\langle u - \mathcal{I}^1  u_{\epsilon} \,,\, \boldsymbol{e}_q \cdot \n \rangle_{\partial T} & 
      = \langle u - u_{\epsilon} \,,\, \boldsymbol{e}_q \cdot \n \rangle_{\partial T} + \langle u_{\epsilon} - \mathcal{I}^1  u_{\epsilon} 
          \,,\, \boldsymbol{e}_q \cdot \n \rangle_{\partial T}.
\end{align*}
For the first term, we apply the Stokes' Theorem:
\begin{align*}
\langle u - u_{\epsilon} \,,\, \boldsymbol{e}_q \cdot \n \rangle_{\partial T} 
       &= \|u - u_{\epsilon}\|_{1, T} \|\boldsymbol{e}_q \|_{H(div, T)} = \|u - u_{\epsilon}\|_{1, T} \|\boldsymbol{e}_q\|_{T} \quad 
      &&\text{by Lemma \ref{H_div_conforming},} \\
& \le C (\epsilon \|u_0\|_{2,T} + \sqrt{\epsilon} \|\nabla u_0\|_{{\infty}, T})  \|\boldsymbol{e}_q\|_{T} 
         \quad &&\text{by Lemma \ref{homogenization_approximation}.}
\end{align*}
For the second term, by the definition of the interpolation $\mathcal{I}^1 u_{\epsilon}$, we have:
\begin{align*}
|\langle u_{\epsilon} - \mathcal{I}^1  u_{\epsilon} \,,\, \boldsymbol{e}_q \cdot \n \rangle_{\partial T}| \le 
                        & |\langle u_{0} - \mathcal{I}^1_H  u_{0} \,,\, \boldsymbol{e}_q \cdot \n \rangle_{\partial T}| 
                            +  \epsilon \, | \langle \nabla( u_{0} - \mathcal{I}^1_H  u_{0} )
                         \,,\, \boldsymbol{\chi} \otimes \boldsymbol{e}_q \cdot \n \rangle_{\partial T}| \\
\intertext{due to Assumption \ref{C1_conforming}, we have $u_0 - \mathcal{I}^c_H u_0 \in H^{\frac23}(\partial T)$, 
so we can apply Stokes' Theorem on both terms and obtain}
|\langle u_{\epsilon} - \mathcal{I}^1  u_{\epsilon} \,,\, \boldsymbol{e}_q \cdot \n \rangle_{\partial T}| 
&\le \|u_0 - \mathcal{I}^1_H u_0\|_{\frac12, \partial T} \|\boldsymbol{e}_q\|_{H(div), T} 
                 +  \epsilon \, \|\nabla( u_{0} - \mathcal{I}^1_H  u_{0})\|_{\frac12, \partial T} \|\boldsymbol{\chi} \otimes \boldsymbol{e}_q\|_{H(div), T} \\ 
&\le \|u_0 - \mathcal{I}^1_H u_0\|_{\frac12, \partial T} \|\boldsymbol{e}_q\|_{T} \\
& \hspace{1cm}+ \epsilon \, \|\nabla( u_{0} - \mathcal{I}^1_H  u_{0})\|_{\frac12, \partial T} (\|\boldsymbol{\chi}\|_{\infty, T} \|\boldsymbol{e}_q\|_{T} 
                + \|\nabla \boldsymbol{\chi}\|_{\infty, T} \|\boldsymbol{e}_q\|_{T}),
\end{align*}
here we used the fact that $\|\nabla \cdot \boldsymbol{e}_q\|_{T} = 0$. Finally, under the assumption \eqref{separation}, 
we have the classic result: 
$\|\boldsymbol{\chi}\|_{\infty, T} = \mathcal{O}(1), \|\nabla \boldsymbol{\chi}\|_{\infty, T} = \mathcal{O}(\epsilon^{-1})$, 
see \cite{JKO94}. Applying these results and the approximation result \eqref{standard_7}, we have
\begin{align*}
|\langle u_{\epsilon} - \mathcal{I}^1  u_{\epsilon} \,,\, \boldsymbol{e}_q \cdot \n \rangle_{\partial T}|
& \le C H^{t-1} \|u_0\|_{t, T}  \|\boldsymbol{e}_q\|_{0, T} +C \epsilon H^{t-2} \|u_0\|_{t, T} (1 + \epsilon^{-1})\|\boldsymbol{e}_q\|_T  \\
& \le C H^{t-2}\|u_0\|_{t, T} \|\boldsymbol{e}_q\|_T,
\end{align*}
which completes the proof. \hfill 
\end{proof}

\subsection{Estimate for $\boldsymbol{q} - \boldsymbol{q}_h$}

We are now ready to establish the following result. 
\begin{theorem}\label{multiscale_estimate_q}
Let the coefficient $\alpha$ satisfy \eqref{separation}. Then there exists a constant $C$, independent of $h,H,L$ and $\epsilon$, such that
\begin{equation}\label{error_usual}
\begin{aligned}
\|\boldsymbol{e}_q\|_{\alpha, \Omega} + \|e_u - \ehatu\|_{\tau, \mathcal{T}_h} & 
    \le  C \|\boldsymbol{q} - \Piv \boldsymbol{q}\|_{\alpha, \Omega} + C h^s L^{-\frac12}( 
 \tau^{-\frac{1}{2}}\|\boldsymbol{q}\|_{s+1} + \tau^{\frac{1}{2}}\|u\|_{s+1}) \\
& + C(1 + \tau^{\frac{1}{2}}L^{-\frac{1}{2}})(\epsilon \|u_0\|_2 + \sqrt{\epsilon} \|\nabla u_0\|_{\infty}) \\
& + C (\tau^{\frac{1}{2}} + h^{-\frac{1}{2}}) H^{t-\frac12} \|u_0\|_{t},
\end{aligned}
\end{equation}
for all $1 \le s \le k+1, \, 1 \le t \le l+1$. 

Moreover, if Assumption \ref{C1_conforming} holds, then the following  improved estimate holds:
\begin{equation}\label{error_better}
\begin{aligned}
\|\boldsymbol{e}_q\|_{\alpha,\Omega} + \|e_u - \ehatu\|_{\tau, \mathcal{T}_h} &
        \le C \|\boldsymbol{q} - \Piv \boldsymbol{q}\|_{\alpha, \Omega} +C h^s L^{-\frac12}( 
        \tau^{-\frac{1}{2}}\|\boldsymbol{q}\|_{s+1} + \tau^{\frac{1}{2}}\|u\|_{s+1}) \\
& + C(1 + \tau^{\frac{1}{2}}L^{-\frac{1}{2}})(\epsilon \|u_0\|_2 + \sqrt{\epsilon} \|\nabla u_0\|_{\infty}) \\
& + C (\tau^{\frac{1}{2}} + H^{-\frac32}) H^{t-\frac12} \|u_0\|_{t}, 
\end{aligned}
\end{equation}
for all $1 \le s \le k+1, \, 1 \le t \le l+1$. 
\end{theorem}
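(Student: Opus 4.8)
The proof will follow the blueprint of Theorem~\ref{error_q} almost verbatim. We keep the polynomial local spaces $W(K)$, $\boldsymbol{V}(K)$, so Assumption~\ref{assum-inclusions} still holds and Lemma~\ref{energy_argument} is available; Assumptions~\ref{single-faced HDG}--\ref{tau_positive_EH} and the fine-mesh structure are unchanged, so Lemma~\ref{H_div_conforming} still gives $\boldsymbol{e}_q\in\boldsymbol{H}(div,T)$ with $\nabla\cdot\boldsymbol{e}_q=0$ on each $T\in\mathcal{P}$ and $\boldsymbol{e}_q\cdot\n=\ehatq\cdot\n$ on $\mathcal{E}^0_h$. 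The only place where the choice of $M_H$ enters is through the interpolation operator used to rewrite the terms involving $P^H_\partial$: we replace the Lagrange interpolant $\mathcal{I}^0_H$ of Theorem~\ref{error_q} by the multiscale interpolant $\mathcal{I} u_\epsilon$ (resp.\ $\mathcal{I}^1 u_\epsilon$ under Assumption~\ref{C1_conforming}) and use Lemma~\ref{L2Interpolant} (resp.\ Lemma~\ref{interpolation_approximation}) in place of \eqref{PHd}, \eqref{IH0}.

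Starting from the energy identity of Lemma~\ref{energy_argument}, three of its four right-hand terms --- $\bint{\alpha\boldsymbol{\delta}_q}{\boldsymbol{e}_q}$, $\bintEH{P^h_\partial u-P^H_\partial u}{\tau(e_u-\ehatu)}$, and $\bintEH{(I-P^h_\partial)(\boldsymbol{q}\cdot\n)}{e_u-\ehatu}$ --- are estimated exactly as in Theorem~\ref{error_q}: Cauchy--Schwarz and Young's inequality, combined with the $h^sL^{-1/2}$ bounds of Lemma~\ref{bdy_to_interior} for the fine-scale pieces and with Lemma~\ref{L2Interpolant} (after $\|P^h_\partial u-P^H_\partial u\|_{\partial\mathcal{T}_H}\le\|u-P^h_\partial u\|_{\partial\mathcal{T}_H}+\|u-P^H_\partial u\|_{\partial\mathcal{T}_H}$) for the coarse piece. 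These already contribute the term $C\|\boldsymbol{q}-\Piv\boldsymbol{q}\|_{\alpha,\Omega}$, the term $Ch^sL^{-1/2}(\tau^{-1/2}\|\boldsymbol{q}\|_{s+1}+\tau^{1/2}\|u\|_{s+1})$, and the $\tau^{1/2}$-weighted parts of the $\epsilon$- and $H^{t-1/2}$-contributions (the factor $\tau^{1/2}L^{-1/2}$ on the $\epsilon$-terms coming from the $L^{-1/2}$ in Lemma~\ref{L2Interpolant}).

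The genuinely new work is the term $\bintEH{(I-P^H_\partial)u}{\boldsymbol{e}_q\cdot\n}$. As in Theorem~\ref{error_q}, since $\mathcal{I} u_\epsilon-P^H_\partial u\in M_H$, the error equation \eqref{error-equation-3} yields
\[
\bintEH{u-P^H_\partial u}{\boldsymbol{e}_q\cdot\n}
 =\bintEH{u-\mathcal{I} u_\epsilon}{\boldsymbol{e}_q\cdot\n}
  +\bintEH{\mathcal{I} u_\epsilon-P^H_\partial u}{\boldsymbol{e}_q\cdot\n-\ehatq\cdot\n},
\]
after which the jump $\boldsymbol{e}_q\cdot\n-\ehatq\cdot\n$ is expanded by \eqref{ehatq}; the second summand then produces only terms of the types already handled, once $\|\mathcal{I} u_\epsilon-P^H_\partial u\|_{\partial\mathcal{T}_H}\le2\|u-\mathcal{I} u_\epsilon\|_{\partial\mathcal{T}_H}$ is bounded by Lemma~\ref{L2Interpolant}. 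For the first summand I would split $u-\mathcal{I} u_\epsilon=(u-u_\epsilon)+(u_\epsilon-\mathcal{I} u_\epsilon)$. On $\bintEH{u-u_\epsilon}{\boldsymbol{e}_q\cdot\n}$ use the $\boldsymbol{H}(div,T)$-conformity of $\boldsymbol{e}_q$ (Lemma~\ref{H_div_conforming}) and the $H^{1/2}(\partial T)$--$\boldsymbol{H}(div,T)$ duality pairing, so that it is bounded by $\sum_T\|u-u_\epsilon\|_{1/2,\partial T}\|\boldsymbol{e}_q\|_T\le C\|u-u_\epsilon\|_{1,\Omega}\|\boldsymbol{e}_q\|_\Omega$, which by Lemma~\ref{homogenization_approximation} is $\le C(\epsilon\|u_0\|_2+\sqrt{\epsilon}\|\nabla u_0\|_\infty)\|\boldsymbol{e}_q\|_\Omega$ --- this is the source of the unweighted $\epsilon$-contributions in \eqref{error_usual}. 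On the remaining piece $\bintEH{u_\epsilon-\mathcal{I} u_\epsilon}{\boldsymbol{e}_q\cdot\n}$, since $u_\epsilon-\mathcal{I} u_\epsilon=(1+\epsilon\boldsymbol{\chi}\cdot\nabla)(u_0-\mathcal{I}_H u_0)$ and $\|u_\epsilon-\mathcal{I} u_\epsilon\|_{\partial\mathcal{T}_H}\le CH^{t-1/2}\|u_0\|_t$ (as in the proof of Lemma~\ref{L2Interpolant}), pair in $L^2(\partial\mathcal{T}_H)$ and use the discrete trace inequality $\|\boldsymbol{e}_q\cdot\n\|_{\partial\mathcal{T}_H}\le Ch^{-1/2}\|\boldsymbol{e}_q\|_\Omega$, valid because $\boldsymbol{e}_q$ is piecewise polynomial; this produces the $h^{-1/2}H^{t-1/2}\|u_0\|_t$ term of \eqref{error_usual}. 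For the improved estimate \eqref{error_better} one simply replaces $\mathcal{I} u_\epsilon$ by $\mathcal{I}^1 u_\epsilon$ and bounds $\bintEH{u-\mathcal{I}^1 u_\epsilon}{\boldsymbol{e}_q\cdot\n}$ in one stroke by Lemma~\ref{interpolation_approximation}, which already exploits $\boldsymbol{H}(div,T)$-conformity and Stokes' theorem, hence needs no inverse inequality in $h$ and pays only the weaker power $H^{t-2}=H^{-3/2}H^{t-1/2}$.

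Finally, collecting all the estimates, applying Young's inequality to every cross term so as to absorb the factors $\|\boldsymbol{e}_q\|^2_{\alpha,\Omega}$ and $\|e_u-\ehatu\|^2_{\tau,\partial\mathcal{T}_h}$ into the left-hand side, and taking square roots gives \eqref{error_usual} (resp.\ \eqref{error_better}). The one real obstacle is the first summand $\bintEH{u-\mathcal{I} u_\epsilon}{\boldsymbol{e}_q\cdot\n}$: because Lemma~\ref{L2Interpolant} controls the multiscale interpolation error only in $L^2(\partial\mathcal{T}_H)$ and not in $H^{1/2}$, one cannot apply the $\boldsymbol{H}(div)$-pairing to the entire error, and the two estimates correspond precisely to the two remedies --- a crude $h^{-1/2}$ inverse inequality applied to the smoother remainder $u_\epsilon-\mathcal{I} u_\epsilon$, or the sharper $H^{-3/2}$ bound made possible by the $C^1$-conformity of Assumption~\ref{C1_conforming} together with Lemma~\ref{interpolation_approximation}.
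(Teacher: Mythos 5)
Your proposal follows the paper's proof essentially verbatim: the same energy identity from Lemma \ref{energy_argument}, the same treatment of the three routine terms, the same rewriting of $\bintEH{(I-P^H_{\partial})u}{\boldsymbol{e}_q\cdot\n}$ via \eqref{error-equation-3} and \eqref{ehatq}, the same split $u-\mathcal{I}u_\epsilon=(u-u_\epsilon)+(u_\epsilon-\mathcal{I}u_\epsilon)$ with the divergence-theorem/$\boldsymbol{H}(div)$-duality bound on the first piece and the $h^{-1/2}$ discrete trace inequality on the second, and the same substitution of Lemma \ref{interpolation_approximation} for the improved bound. The argument is correct and there is no substantive difference from the paper's own proof.
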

\begin{proof}
By Lemma \ref{energy_argument} we have
\begin{alignat*}{1}
\|\boldsymbol{e}_q\|^2_{\alpha, \Omega} + \|e_u- \ehatu\|^2_{\alphaa, \partial \mathcal{T}_h}
& = -\bint{\alpha \boldsymbol{\delta}_q}{\boldsymbol{e}_q} - \bintEH{(I - P^H_{\partial}) u}{\boldsymbol{e}_q \cdot \n} \\
& + \bintEH{P^h_{\partial} u  - P^H_{\partial} u}{\tau(e_u - \ehatu)} - \bintEH{(I - P^h_{\partial})(\boldsymbol{q}\cdot \n)}{e_u - \ehatu}\\
& = T_1  + T_2 + T_3 +  T_4.
\end{alignat*}
We will estimate these four terms separately. For the first term we easily get
\[
|T_1 | \le \|\alpha\|_{\infty} \|\boldsymbol{\delta}_q\|_{\alpha, \Omega} \|\boldsymbol{e}_q\|_{\alpha, \Omega} 
\quad \mbox{for all} \quad 1\le s \le k+1.
\]

Further, using the approximation properties of  $P^h_{\partial}$ 
and $ P^H_{\partial}$ established in Lemma \ref{bdy_to_interior} we get
\begin{align*}
|T_3| &=  |\bintEH{P^h_{\partial} u  - P^H_{\partial} u}{\tau(e_u - \ehatu)}|  \\
&  =  | \bintEH{u - P^h_{\partial} u }{\tau(e_u - \ehatu)} - \bintEH{ u  - P^H_{\partial} u}{\tau(e_u - \ehatu)}| \\
&  \le \tau^{\frac{1}{2}}( \|u - P^h_{\partial} u\|_{\partial \mathcal{T}_H} 
          + \|u - P^H_{\partial} u\|_{\partial \mathcal{T}_H}) \|e_u- \ehatu\|_{\alphaa, \partial \mathcal{T}_h} \\
&\le C \tau^{\frac{1}{2}}(L^{-\frac12}h^s \|u\|_{s+1} 
         + \|u - P^H_{\partial} u\|_{\partial \mathcal{T}_H}) \|e_u- \ehatu\|_{\alphaa, \partial \mathcal{T}_h}
                           \quad  \quad \text{by \eqref{Phd}} 
\end{align*}
and bound the term $  \|u - P^H_{\partial} u\|_{\partial \mathcal{T}_H}$ using Lemma \ref{L2Interpolant}.

In a similar way we estimate $T_4$:
\[
|T_4| \le \tau^{-\frac{1}{2}} \|(I - P^h_{\partial})(\boldsymbol{q}\cdot \n)\|_{\partial \mathcal{T}_H} \|e_u - \ehatu\|_{\tau,\mathcal{T}_H} 
           \le C \tau^{-\frac{1}{2}}L^{-\frac{1}{2}}h^s \|\boldsymbol{q}\|_{s+1}\|e_u - \ehatu\|_{\tau,\mathcal{T}_h}.
\]
Finally, for $T_2$, if we simply apply Cauchy-Schwarz inequality and trace inequality, we will have a term of order $\sqrt{\frac{\epsilon}{h}}$. 
This estimate is not desirable since $h < \epsilon$. To bound $T_2$ in a better way, we first 
rewrite it  using the error equation \eqref{error-equation-3}:
\begin{align*}
T_2 &= - \bintEH{u  - \mathcal{I} u_{\epsilon}}{\boldsymbol{e}_q \cdot \n} 
         -  \bintEH{\mathcal{I} u_{\epsilon} - P^H_{\partial} u}{\boldsymbol{e}_q \cdot \n} \\
&=  - \bintEH{u - \mathcal{I} u_{\epsilon}}{\boldsymbol{e}_q \cdot \n} 
         -  \bintEH{\mathcal{I} u_{\epsilon} - P^H_{\partial} u}{\boldsymbol{e}_q \cdot \n - \ehatq \cdot \n}. 
\end{align*}
Then using the equation \eqref{ehatq} we get
\begin{align*}
T_2 &= - \bintEH{u- \mathcal{I} u_{\epsilon}}{\boldsymbol{e}_q \cdot \n} -  \bintEH{\mathcal{I} u_{\epsilon} - P^H_{\partial} u}{\tau(e_u - \ehatu)} \\
&+  \bintEH{\mathcal{I} u_{\epsilon} - P^H_{\partial} u}{(P^h_{\partial} - P^H_{\partial})(\boldsymbol{q} \cdot \n + \tau u)} : = T_{21} + T_{22} + T_{23}.
\end{align*}
Now we estimate these three terms separately. Now recall that  $\|\nabla \cdot \boldsymbol{e}_q\|_T=0 $  
for $T \in \mathcal{P}$ by Lemma \ref{H_div_conforming}. Thus
using divergence theorem we get
\begin{align*}
T_{21} & =  \bintEH{u- u_{\epsilon}}{\boldsymbol{e}_q \cdot \n} 
               +  \bintEH{u_{\epsilon}- \mathcal{I} u_{\epsilon}}{\boldsymbol{e}_q \cdot \n} \\
& = \sum_{T \in \mathcal{T}}(\nabla ( u- u_{\epsilon}), \boldsymbol{e}_q)_T 
               + \bintEH{u_{\epsilon}- \mathcal{I} u_{\epsilon}}{\boldsymbol{e}_q \cdot \n}\\
&\le \sum_{T \in \mathcal{T}} \|u - u_{\epsilon}\|_{1,T} \|\boldsymbol{e}_q\|_{ T} 
               + \bintEH{(1 + \epsilon \boldsymbol{\chi}\cdot \nabla)(u_0 - \mathcal{I}_H u_0)}{\boldsymbol{e}_q \cdot \n} \\
& \le  \|u - u_{\epsilon}\|_{1} \|\boldsymbol{e}_q\| + (\|u_0 - \mathcal{I}_H u_0\|_{\partial \mathcal{T}_H} 
               + \epsilon \|\boldsymbol{\chi}\|_{\infty} \|\nabla (u_0 - \mathcal{I}_H u_0)\|_{\partial \mathcal{T}_H}) \|\boldsymbol{e}_q \cdot \n\|_{\partial \mathcal{T}_H}.
\end{align*}
Now using the trace inequality \eqref{eq:trace} (for $D=T$) and \eqref{standard_7} we get
\begin{align*}
|T_{21}|  
&\le C \|\boldsymbol{e}_q\|_0 \left \{ \|u - u_{\epsilon}\|_1 + h^{-\frac{1}{2}} \big (\|u_0 - \mathcal{I}_H u_0\|_{\partial \mathcal{T}_H} 
                                         + \epsilon \|\nabla(u_0 - \mathcal{I}_H u_0)\|_{\partial \mathcal{T}_H}\big ) \right \} \\
& \le C \|\boldsymbol{e}_q\|_0 \left \{ \epsilon \|u_0\|_2 + \sqrt{\epsilon} \|\nabla u_0\|_{\infty} 
                                         + h^{-\frac{1}{2}} H^{t-\frac12}\|u_0\|_{t} \right \}.
\end{align*}
For estimating $T_{22}$ we apply Cauchy-Schwarz and triangle inequalities
\[
|T_{22} | \le \tau^{\frac{1}{2}}\|\mathcal{I} u_{\epsilon} - P^H_{\partial} u\|_{\partial \mathcal{T}_H} \|e_u - \ehatu\|_{\tau, \partial \mathcal{T}_H}  
                               \le 2 \tau^{\frac{1}{2}}\|u - \mathcal{I} u_{\epsilon}\|_{\partial \mathcal{T}_H} \|e_u - \ehatu\|_{\tau, \partial \mathcal{T}_h}
\]
and then bound $ \|u - \mathcal{I} u_{\epsilon}\|_{\partial \mathcal{T}_H}$  using  Lemma \ref{L2Interpolant}.
For $T_{23}$, we have
\begin{align*}
|T_{23}| &= |\bintEH{\mathcal{I} u_{\epsilon} - P^H_{\partial} u}{(P^h_{\partial} - I)(\boldsymbol{q} \cdot \n + \tau u)}| \\
&\le 2\|u - \mathcal{I} u_{\epsilon}\|_{\partial \mathcal{T}_H} (\|\boldsymbol{q} \cdot \n - P^h_{\partial} (\boldsymbol{q} \cdot \n)\|_{\partial \mathcal{T}_H} 
                       + \tau \|u - P^h_{\partial} u\|_{\partial \mathcal{T}_H})  \\
& \le C L^{-\frac{1}{2}}h^s (\|\boldsymbol{q}\|_{s+1} + \tau \|u\|_{s+1})\|u - \mathcal{I} u_{\epsilon}\|_{\partial \mathcal{T}_H} \\
& \le C \left\{ (h^s L^{-\frac12} (\tau^{-\frac12} \|\boldsymbol{q}\|_{s+1} + \tau^{\frac12}\|u\|_{s+1}))^2 
         + (\tau^{\frac12}\|u - \mathcal{I} u_{\epsilon}\|_{\partial \mathcal{T}_H} )^2 \right\}.
\end{align*}
Then we estimate the term $\|u - \mathcal{I} u_{\epsilon}\|_{\partial \mathcal{T}_H} $ by Lemma \ref{L2Interpolant} and
get \eqref{error_usual}  by combining the estimates for $T_1, T_2, T_3, T_4$. 

The estimate \eqref{error_better} we shall prove under the  Assumption \ref{C1_conforming}, 
namely, a $C^1$-conforming interpolate $\mathcal{I}^1_H u_{\epsilon}$ exists and it converges to 
$u_{\epsilon}$ with order $\mathcal{O}(H^{l+1})$. Then we can replace 
the $L^2$ interpolate $\mathcal{I} u_{\epsilon}$ by $\mathcal{I}^1_H u_{\epsilon}$ and 
then apply Lemma \ref{interpolation_approximation} to $T_{21}$ to get
\begin{align*}
T_{21} &= \bintEH{u- \mathcal{I}^1_H u_{\epsilon}}{\boldsymbol{e}_q \cdot \n}
\le  C \left ( H^{t - 2} \|u_0\|_{t } 
                    + \epsilon \|u_0\|_{2} + \sqrt{\epsilon}\|\nabla u_0\|_{\infty} \right ) \|\boldsymbol{e}_q\|_{0},
\end{align*}
which completes the proof.
\hfill   \end{proof}

As a consequence of Theorem \ref{multiscale_estimate_q}, we immediately obtain an $L^2$ estimate for $\boldsymbol{q} - \boldsymbol{q}_h$:
\begin{corollary}\label{L2estimate_q}
Suppose we have the same assumption as Theorem \ref{multiscale_estimate_q}.Then we have
\begin{align*}
\|\boldsymbol{q} - \boldsymbol{q}_h\|_{\alpha, \Omega}  &\le  C \|\boldsymbol{q} - \Piv \boldsymbol{q}\|_{\alpha, \Omega} + C h^s L^{-\frac12}( 
 \tau^{-\frac{1}{2}}\|\boldsymbol{q}\|_{s+1} + \tau^{\frac{1}{2}}\|u\|_{s+1}) \\
& + C(1 + \tau^{\frac{1}{2}}L^{-\frac{1}{2}})(\epsilon \|u_0\|_2 + \sqrt{\epsilon} \|\nabla u_0\|_{\infty}) 
 + C (\tau^{\frac{1}{2}} + h^{-\frac{1}{2}}) H^{t-\frac12} \|u_0\|_{t},
\end{align*}
for all $1 \le s \le k+1, \, 1 \le t \le l+1$.

Moreover, if Assumption \ref{C1_conforming} holds, then we have the following  improved  estimate
\begin{align*}
\|\boldsymbol{q} - \boldsymbol{q}_h\|_{\alpha, \Omega}  &\le  C \|\boldsymbol{q} - \Piv \boldsymbol{q}\|_{\alpha, \Omega} + C h^s L^{-\frac12}( 
 \tau^{-\frac{1}{2}}\|\boldsymbol{q}\|_{s+1} + \tau^{\frac{1}{2}}\|u\|_{s+1}) \\
& + C(1 + \tau^{\frac{1}{2}}L^{-\frac{1}{2}})(\epsilon \|u_0\|_2 + \sqrt{\epsilon} \|\nabla u_0\|_{\infty}) 
 + C (\tau^{\frac{1}{2}} + H^{-\frac32}) H^{t-\frac12} \|u_0\|_{t},
\end{align*}
for all $1 \le s \le k+1, \, 1 \le t \le l+1$. 
\end{corollary}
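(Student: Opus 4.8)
The plan is to obtain Corollary \ref{L2estimate_q} as an immediate consequence of Theorem \ref{multiscale_estimate_q}: the theorem already controls the projection error $\boldsymbol{e}_q = \Piv \boldsymbol{q} - \boldsymbol{q}_h$ in the weighted norm, and the only remaining task is to pass from $\boldsymbol{e}_q$ to the genuine error $\boldsymbol{q} - \boldsymbol{q}_h$ by a triangle inequality, together with a bound on the projection error $\boldsymbol{\delta}_q = \boldsymbol{q} - \Piv \boldsymbol{q}$ that is already present on the right-hand side of the theorem.

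Concretely, I would first write the splitting
\[
\boldsymbol{q} - \boldsymbol{q}_h = (\boldsymbol{q} - \Piv \boldsymbol{q}) + (\Piv \boldsymbol{q} - \boldsymbol{q}_h) = \boldsymbol{\delta}_q + \boldsymbol{e}_q,
\]
and then invoke Minkowski's inequality in the weighted space $L^2(\Omega;\alpha\,dx)$ (legitimate since $\alpha > 0$, so $\|\cdot\|_{\alpha,\Omega}$ is a genuine norm), obtaining
\[
\|\boldsymbol{q} - \boldsymbol{q}_h\|_{\alpha,\Omega} \le \|\boldsymbol{q} - \Piv \boldsymbol{q}\|_{\alpha,\Omega} + \|\boldsymbol{e}_q\|_{\alpha,\Omega}.
\]
Next I would bound the last term by discarding the nonnegative summand $\|e_u - \ehatu\|_{\tau,\mathcal{T}_h}$ on the left-hand side of \eqref{error_usual}; this reproduces exactly the right-hand side claimed in the corollary, except that the term $\|\boldsymbol{q} - \Piv \boldsymbol{q}\|_{\alpha,\Omega}$ then appears with constant $C+1$ rather than $C$, which we simply relabel $C$. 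The improved estimate under Assumption \ref{C1_conforming} follows in precisely the same way, using \eqref{error_better} in place of \eqref{error_usual}.

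There is essentially no obstacle here; the only points worth recording are that the constants supplied by Theorem \ref{multiscale_estimate_q} are independent of $h,H,L,\epsilon$, so the same independence is inherited by the corollary, and that no regularity beyond that already assumed in the theorem (in particular $u_0 \in H^2(\Omega)\cap W^{1,\infty}(\Omega)$ and $u_0 \in H^t(\Omega)$ for $1\le t\le l+1$) is required.
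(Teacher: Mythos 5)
Your proposal is correct and matches the paper's (implicit) argument: the paper presents the corollary as an immediate consequence of Theorem \ref{multiscale_estimate_q}, obtained exactly by the triangle inequality $\|\boldsymbol{q}-\boldsymbol{q}_h\|_{\alpha,\Omega}\le\|\boldsymbol{q}-\Piv\boldsymbol{q}\|_{\alpha,\Omega}+\|\boldsymbol{e}_q\|_{\alpha,\Omega}$ followed by discarding the nonnegative term $\|e_u-\ehatu\|_{\tau,\mathcal{T}_h}$ and absorbing the resulting constant. Nothing further is needed.
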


\subsection{Estimate for $u - u_h$} 

In Section 3, we use a standard duality argument to get an a priori estimate for $u - u_h$. 
It is based on the full $H^2$ regularity assumption \eqref{regularity} of the adjoint equation \eqref{adjoint}. 
When the permeability coefficient $\alpha$ has two separated scales, the regularity assumption is no longer valid. 
Instead, we consider the following adjoint problem:
\begin{subequations}\label{multiscale_adjoint}
\begin{alignat}{2}
\label{multiscale_adjoint_1}  \boldsymbol{\theta} + \nabla \phi & = 0 \quad && \text{in $\Omega$,}\\
\label{multiscale_adjoint_2} \nabla \cdot \boldsymbol{\theta} & = e_u && \text{in $\Omega$,} \\
\label{multiscale_adjoint_3} \phi & = 0 && \text{on $\partial \Omega$.}
\end{alignat}
\end{subequations}
We assume the above problem has full $H^2$ regularity:
\begin{equation}\label{H2_regularity}
\|\phi\|_2 + \|\boldsymbol{\theta}\|_1 \le C \|e_u\|_0,
\end{equation}
where $C$ only depends on the domain $\Omega$.

We are ready to state the estimate for $u - u_h$:
\begin{theorem}\label{multiscale_estimate_u}
Suppose the same assumptions as Theorem \ref{multiscale_estimate_q}. In addition, we also assume the 
$H^2$ regularity \eqref{H2_regularity} holds. Then we have
 \[
\|e_u\|_0 \le C \|\boldsymbol{q} - \boldsymbol{q}_h\|_{\alpha,\Omega}
     + C \Big (\frac{h}{\tau L} \Big )^{\frac{1}{2}} \|e_u - \ehatu\|_{\tau, \mathcal{T}_h} 
     + C  \Big (\frac{h}{L}  \Big )^{\frac{1}{2}} \|u - P^H_{\partial} u\|_{\partial \mathcal{T}_H}.
\]
\end{theorem}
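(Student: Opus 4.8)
The plan is to follow the same duality framework as in Theorem \ref{error_u}, but tracking the quantities that replace the $H^2$-regularity bound in the multiscale setting. First I would invoke Lemma \ref{duality}, which is purely algebraic (it uses only the error equations and the projection properties, not any regularity), so the identity $\|e_u\|^2_\Omega = \mathbb{S}_1 + \mathbb{S}_2 + \mathbb{S}_3 + \mathbb{S}_4$ remains valid verbatim here. The adjoint solution $(\boldsymbol{\theta},\phi)$ now solves \eqref{multiscale_adjoint} with the $H^2$-regularity \eqref{H2_regularity}, which gives $\|\phi\|_2 + \|\boldsymbol{\theta}\|_1 \le C\|e_u\|_0$. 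Since the permeability in the adjoint problem is taken to be $1$ (see \eqref{multiscale_adjoint_1}), $\boldsymbol{\theta}$ and $\phi$ are smooth and the dual trace estimates of Lemma \ref{dual_trace_inequality} apply without change.

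Next I would estimate the four terms. For $\mathbb{S}_1$ the bound $|\mathbb{S}_1| \le C_\alpha h(\|\boldsymbol{e}_q\|_{\alpha,\Omega} + \|\boldsymbol{\delta}_q\|_{\alpha,\Omega})\|e_u\|_0$ carries over verbatim from the proof of Theorem \ref{error_u}; note $\|\boldsymbol{e}_q\|_{\alpha,\Omega} + \|\boldsymbol{\delta}_q\|_{\alpha,\Omega}$ is controlled by $\|\boldsymbol{q}-\boldsymbol{q}_h\|_{\alpha,\Omega}$ by the triangle inequality. For $\mathbb{S}_2$, the same reduction to $\partial\mathcal{T}_H$ works, and using \eqref{dual_trace_1}--\eqref{dual_trace_2} together with \eqref{H2_regularity} one gets a bound of the form $C(h^{1/2}\tau^{-1/2}\|e_u-\ehatu\|_{\tau,\partial\mathcal{T}_h} + h^{3/2}\|\boldsymbol{e}_q\cdot\n - \ehatq\cdot\n\|_{\partial\mathcal{T}_H})\|e_u\|_0$; the jump term $\|\boldsymbol{e}_q\cdot\n - \ehatq\cdot\n\|_{\partial\mathcal{T}_H}$ is bounded via Lemma \ref{H_div_conforming} and \eqref{ehatq}, and will be absorbed (after a Young's inequality) into $\tau^{1/2}\|e_u-\ehatu\|_{\tau,\partial\mathcal{T}_H}$ plus the already-present source terms. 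The key departure from Section 4 is in $\mathbb{S}_3$ and $\mathbb{S}_4$: the term $\mathbb{S}_3$ contains $(P^\partial_h - P_M)u$ on $\mathcal{E}_H$, which is exactly $(P^\partial_h - P^H_\partial)u$, and this is controlled by $\|u - P^H_\partial u\|_{\partial\mathcal{T}_H}$ plus an $h^s$-order term; combined with the $h^{1/2}$ factor coming from \eqref{dual_trace_1}--\eqref{dual_trace_2} and \eqref{H2_regularity} (here I would use the scaling inequality $\|\phi - P^\partial_h\phi\|_{\partial\mathcal{T}_H} \le C h^{3/2}L^{-1/2}\|\phi\|_2$ and similarly for $\boldsymbol{\theta}$, with the $L^{-1/2}$ coming from summing over subdomains of size $L$), this produces a contribution $C(h/L)^{1/2}\|u-P^H_\partial u\|_{\partial\mathcal{T}_H}\|e_u\|_0$. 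For $\mathbb{S}_4$, using \eqref{dual_trace_3}--\eqref{dual_trace_4} and Lemma \ref{H_div_conforming} ($\nabla\cdot\boldsymbol{e}_q = 0$ on each $T$) gives $CH(\|\boldsymbol{e}_q\|_0 + H^{1/2}\|\boldsymbol{e}_q\cdot\n - \ehatq\cdot\n\|_{\partial\mathcal{T}_h})\|e_u\|_0$, which is dominated by the $\|\boldsymbol{q}-\boldsymbol{q}_h\|_{\alpha,\Omega}$ term and the jump term already discussed.

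Finally I would collect all the estimates for $\mathbb{S}_1,\dots,\mathbb{S}_4$, divide through by $\|e_u\|_0$, and regroup: the coefficients of $\|\boldsymbol{q}-\boldsymbol{q}_h\|_{\alpha,\Omega}$ combine (all are $O(h)$, $O(H)$, or $O(1)$ constants, hence bounded), the $\|e_u-\ehatu\|_{\tau,\partial\mathcal{T}_h}$ terms all carry a factor $(h/(\tau L))^{1/2}$ after the $L^{-1/2}$ subdomain-scaling is accounted for, and the remaining genuinely new term is $(h/L)^{1/2}\|u - P^H_\partial u\|_{\partial\mathcal{T}_H}$, which is precisely the quantity that will later be estimated by Lemma \ref{L2Interpolant} to bring in the multiscale interpolation error. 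The main obstacle is the bookkeeping of the three scales $h, H, L$ in the trace inequalities: one must be careful that every boundary norm over $\partial\mathcal{T}_H$ gets the correct $L^{-1/2}$ factor from summing over the coarse patches, and that the $h^s$- and $H^t$-order remainders arising in $\mathbb{S}_2$ and $\mathbb{S}_3$ are genuinely of lower order than, or already contained in, the terms appearing in Corollary \ref{L2estimate_q} for $\|\boldsymbol{q}-\boldsymbol{q}_h\|_{\alpha,\Omega}$ — so that they can be folded into the first term on the right-hand side rather than appearing explicitly.
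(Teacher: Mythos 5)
Your plan reuses the Section 4 machinery (Lemma \ref{duality} and the $\mathbb{S}_1$--$\mathbb{S}_4$ decomposition), but this is not what the paper does, and it runs into two concrete problems. First, Lemma \ref{duality} is \emph{not} valid verbatim here: its proof inserts $-\bint{\boldsymbol{e}_q}{\alpha\boldsymbol{\theta}+\nabla\phi}=0$ using the $\alpha$-weighted adjoint \eqref{adjoint-1}, whereas the adjoint problem of Section 5, \eqref{multiscale_adjoint_1}, has unit coefficient ($\boldsymbol{\theta}+\nabla\phi=0$); the cancellation against $\bint{\alpha \boldsymbol{e}_q}{\Piv\boldsymbol{\theta}}$ that produces $\mathbb{S}_1$ therefore fails and the identity must be re-derived. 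Second, and more seriously, the terms $\mathbb{S}_2$, $\mathbb{S}_3$, $\mathbb{S}_4$ all bring in the quantity $(P^{\partial}_h-P^H_{\partial})(\boldsymbol{q}\cdot\n)$ on $\mathcal{E}_H$ (explicitly in $\mathbb{S}_3$, and through \eqref{ehatq} whenever you bound the jump $\|\boldsymbol{e}_q\cdot\n-\ehatq\cdot\n\|_{\partial\mathcal{T}_H}$ in $\mathbb{S}_2$ and $\mathbb{S}_4$). Controlling this requires an approximation estimate for the normal flux $\boldsymbol{q}\cdot\n$ in the homogenization-based space $M_H$, which the paper does not provide (Lemma \ref{L2Interpolant} covers only $u$), and which neither appears among nor is dominated by the three terms on the right-hand side of the theorem. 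The same goes for the coefficient $\tau^{1/2}H^{3/2}$ on $\|e_u-\ehatu\|_{\tau,\partial\mathcal{T}_h}$ inherited from $\mathbb{S}_4$: the stated bound carries only $(h/(\tau L))^{1/2}$. Declaring that these remainders can be ``folded into'' $\|\boldsymbol{q}-\boldsymbol{q}_h\|_{\alpha,\Omega}$ is not legitimate, since that term is the actual error, not an upper bound for it.

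The paper's proof avoids all of this with a leaner duality argument. It writes $\|e_u\|_0^2=\bint{e_u}{\nabla\cdot\Piv\boldsymbol{\theta}}+\bintEh{e_u}{(\boldsymbol{\theta}-\Piv\boldsymbol{\theta})\cdot\n}$ and uses \emph{only} the first error equation \eqref{error-equation-1} with $\boldsymbol{v}=\Piv\boldsymbol{\theta}$; the second error equation and $\Piw\phi$ never enter, so no flux projection error is ever created. This yields exactly three terms, namely $\bint{\alpha(\boldsymbol{q}-\boldsymbol{q}_h)}{\Piv\boldsymbol{\theta}}$, $-\bintEh{(I-P_M)u}{(\boldsymbol{\theta}-\Piv\boldsymbol{\theta})\cdot\n}$ and $\bintEh{e_u-\ehatu}{(\boldsymbol{\theta}-\Piv\boldsymbol{\theta})\cdot\n}$, and the whole estimate then reduces to the single auxiliary bound $\|(\boldsymbol{\theta}-\Piv\boldsymbol{\theta})\cdot\n\|_{\partial\mathcal{T}_H}\le Ch^{1/2}L^{-1/2}\|e_u\|_0$, proved with a Cl\'ement interpolant, plus the observation that the $e_u-\ehatu$ pairing vanishes on interior faces. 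If you want to salvage your route you would have to either prove a multiscale approximation result for $\boldsymbol{q}\cdot\n$ in $M_H$ or accept a weaker statement; the paper's choice of test function is precisely what makes that unnecessary.
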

\begin{proof}
We begin by the fact that
\begin{align*}
\|e_u\|^2_0 &= \bint{e_u}{\nabla \cdot \boldsymbol{\theta}} 
      = \bint{e_u}{\nabla \cdot \Piv \boldsymbol{\theta}} + \bint{e_u}{\nabla \cdot (\boldsymbol{\theta} - \Piv \boldsymbol{\theta}}\\
\intertext{Then integrating by parts on the second term  
and using the property \eqref{HDG  projection-2} we get} 
\|e_u\|^2_0 & = \bint{e_u}{\nabla \cdot \Piv \boldsymbol{\theta}} + \bintEh{e_u}{(\boldsymbol{\theta} - \Piv \boldsymbol{\theta})\cdot \n}. 
\end{align*}
Taking $\boldsymbol{v} = \Piv \boldsymbol{\theta}$ in the error equation \eqref{error-equation-1}, we have
\[
\bint{e_u}{\nabla \cdot \Piv \boldsymbol{\theta}} 
      = \bint{\alpha(\boldsymbol{q} - \boldsymbol{q}_h)}{\Piv \boldsymbol{\theta}} 
           + \bintEh{\ehatu}{\Piv \boldsymbol{\theta} \cdot \n} + \bintEh{(I - P_M) u}{\Piv \boldsymbol{\theta} \cdot \n}.
\] 
Now using  the fact that $\ehatu, (I - P_M)u$ are single valued on $\partial \mathcal{T}_h$, so $\bintEh{\ehatu}{\boldsymbol{\theta} \cdot \n} = \bintEh{(I - P_M)u}{\boldsymbol{\theta}\cdot \n} = 0$ after some algebraic manipulation, we obtain:
\begin{align*}
\|e_u\|^2_0 &= \bint{\alpha(\boldsymbol{q} - \boldsymbol{q}_h)}{\Piv \boldsymbol{\theta}} + \bintEh{(I - P_M)u}{\Piv \boldsymbol{\theta}\cdot \n} \\
&+ \bintEh{\ehatu}{\Piv \boldsymbol{\theta}\cdot \n} + \bintEh{e_u}{(\boldsymbol{\theta} - \Piv \boldsymbol{\theta})\cdot \n}\\
& = \bint{\alpha(\boldsymbol{q} - \boldsymbol{q}_h)}{\Piv \boldsymbol{\theta}} 
- \bintEh{(I - P_M)u}{(\boldsymbol{\theta} - \Piv \boldsymbol{\theta})\cdot \n} + \bintEh{e_u - \ehatu}{(\boldsymbol{\theta} - \Piv \boldsymbol{\theta})\cdot \n}.
\end{align*}
We now estimate the above three terms separately.
\begin{align*}
 \bint{\alpha(\boldsymbol{q} - \boldsymbol{q}_h)}{\Piv \boldsymbol{\theta}} & =  \bint{\alpha(\boldsymbol{q} - \boldsymbol{q}_h)}{ \boldsymbol{\theta}} -  \bint{\alpha(\boldsymbol{q} - \boldsymbol{q}_h)}{ \boldsymbol{\theta} - \Piv \boldsymbol{\theta}} \\
&\le C (\|\boldsymbol{q} - \boldsymbol{q}_h\|_{\alpha,\Omega} \|\boldsymbol{\theta}\|_0 + \|\boldsymbol{q} - \boldsymbol{q}_h\|_{\alpha,\Omega} \|\boldsymbol{\theta} - \Piv \boldsymbol{\theta}\|_0) \\
&\le C (\|\boldsymbol{q} - \boldsymbol{q}_h\|_{\alpha, \Omega} \|e_u\|_0 + h \|\boldsymbol{q} - \boldsymbol{q}_h\|_{\alpha,\Omega} \|\phi\|_1)\\
\intertext{so that after using the full  regularity assumption \eqref{H2_regularity} and Lemma \ref{projection approximation}, we get}
 \bint{\alpha(\boldsymbol{q} - \boldsymbol{q}_h)}{\Piv \boldsymbol{\theta}}
                             & \le C \|\boldsymbol{q} - \boldsymbol{q}_h\|_{\alpha,\Omega} \|e_u\|_0.
\end{align*}
In order to bound the other two terms, we need the following bound:
\begin{equation}\label{aux_estimate}
\|(\boldsymbol{\theta} - \Piv \boldsymbol{\theta}) \cdot \n\|_{\partial \mathcal{T}_H} \le C h^{\frac{1}{2}} L^{-\frac{1}{2}} \|e_u\|_0. 
\end{equation}
On each $T \in \mathcal{P}$, we use $\mathcal{I} \boldsymbol{\theta}$ to denote the Cl\'{e}ment interpolation from 
$H^1(T)$ into $\boldsymbol{V}(T)$. Then we have
\begin{align*}
\|(\boldsymbol{\theta} - \Piv \boldsymbol{\theta}) \cdot \n\|_{\partial \mathcal{T}_H} 
          & \le \|\mathcal{I} \boldsymbol{\theta} \cdot \n- \Piv \boldsymbol{\theta} \cdot \n\|_{\partial \mathcal{T}_H} 
               + \| (\boldsymbol{\theta} - \mathcal{I} \boldsymbol{\theta}) \cdot \n\|_{\partial \mathcal{T}_H} \\
& \le C h^{-\frac{1}{2}} \|\mathcal{I} \boldsymbol{\theta} - \Piv \boldsymbol{\theta} \|_{0} 
               + \| \boldsymbol{\theta} - \mathcal{I} \boldsymbol{\theta} \|_{\partial \mathcal{T}_H} \\
& \le C h^{-\frac{1}{2}}( \|\boldsymbol{\theta} - \Piv \boldsymbol{\theta} \|_{0} 
              +  \| \boldsymbol{\theta} - \mathcal{I} \boldsymbol{\theta}\|_{0}) + \sum_{T \in \mathcal{P}} C h^{\frac{1}{2}} \|\boldsymbol{\theta}\|_{1, T} \\
&\le C h^{\frac{1}{2}} (\|\phi\|_1 + \|\boldsymbol{\theta}\|_1) + C L^{-\frac{1}{2}} h^{\frac{1}{2}} \|\boldsymbol{\theta}\|_{1} 
 \le C h^{\frac{1}{2}} L^{-\frac{1}{2}} \|e_u\|_0
\end{align*}
the last step is due to the regularity condition \eqref{H2_regularity}. We now estimate the other two terms. First, we
have the equality
\[
\bintEh{(I - P_M)u}{(\boldsymbol{\theta} - \Piv \boldsymbol{\theta})\cdot \n}  
        = \bintEH{u - P^H_{\partial} u}{(\boldsymbol{\theta} - \Piv \boldsymbol{\theta})\cdot \n}, 
\]
due to the fact that $P_M = P^h_{\partial}$ on $F \in \mathcal{E}^0_h$ and $u - P_M u$ is single valued 
on $\partial \mathcal{T}_h$. We apply Cauchy-Schwarz inequality to obtain:
\begin{align*}
\bintEh{(I - P_M)u}{(\boldsymbol{\theta} - \Piv \boldsymbol{\theta})\cdot \n} 
    &\le \|u - P^H_{\partial}\|_{\partial \mathcal{T}_H} \| (\boldsymbol{\theta} - \Piv \boldsymbol{\theta})\cdot \n\|_{\partial \mathcal{T}_H} \\
& \le C h^{\frac{1}{2}} L^{-\frac{1}{2}} \|u - P^H_{\partial} u\|_{\partial \mathcal{T}_H}\|e_u\|_0.
\end{align*}
Finally, consider any interior edge $F \in \mathcal{E}^0_h \cap \partial \mathcal{T}_h$. If $\tau > 0$ on $F$, by the identity \eqref{local_identity}, we have
\[
\langle e_u - \ehatu \, , \, (\boldsymbol{\theta} - \Piv \boldsymbol{\theta})\cdot \n \rangle_F = 0.
\]
On the other hand, if $\tau = 0$ on $F$, by the definition of the projection \eqref{HDG projection-3}, we still have the above identity. This applies
\begin{align*}
\bintEh{e_u - \ehatu}{(\boldsymbol{\theta} - \Piv \boldsymbol{\theta})\cdot \n} &= \bintEH{e_u - \ehatu}{(\boldsymbol{\theta} - \Piv \boldsymbol{\theta})\cdot \n} \\
& \le \tau^{-\frac{1}{2}} \|e_u - \ehatu\|_{\tau, \mathcal{T}_h} \|(\boldsymbol{\theta} - \Piv \boldsymbol{\theta})\cdot \n\|_{\partial \mathcal{T}_H} \\
& \le C \tau^{-\frac{1}{2}} h^{\frac{1}{2}} L^{-\frac{1}{2}}  \|e_u - \ehatu\|_{\tau, \mathcal{T}_h} \|e_u\|_0,
\end{align*}
by the estimate \eqref{aux_estimate}. The proof is complete by combining the above three estimates.
\hfill   \end{proof}  

As a consequence of Theorem \ref{multiscale_estimate_u}, Theorem \ref{multiscale_estimate_q} and Lemma \ref{L2Interpolant}, 
we immediately have the following estimate for $u - u_h$:
\begin{corollary}\label{L2estimate_u}
Let the assumptions  of Theorem \ref{multiscale_estimate_u} are fulfilled. Then
\begin{align*}
\|u - u_h\|_0 \le Ch^s (\|u\|_s + \tau^{-1} \|\boldsymbol{q}\|_s) 
         & + C \Big ( \frac{h}{L}\Big )^{\frac{1}{2}} \Big ( \epsilon \|u_0\|_2 + \sqrt{\epsilon} \|\nabla u_0\|_{\infty} + H^{t-1\frac12}\|u_0\|_{t} \Big ) \\
& + C \Big(1 +  \Big (\frac{h}{\tau L}  \Big )^{\frac{1}{2}}\Big) (\|\boldsymbol{e}_q\|_0 + \|e_u -\ehatu\|_{\tau, \mathcal{T}_h}),
\end{align*}
for all $1 \le s \le k+1, 1 \le t \le l+1$.
\end{corollary}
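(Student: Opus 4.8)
The plan is to peel $\|u-u_h\|_0$ down to the quantities already estimated in Theorem \ref{multiscale_estimate_u} and then insert the available bounds for the leftover pieces. First I would use the triangle inequality $\|u-u_h\|_0\le\|u-\Piw u\|_0+\|e_u\|_0$, with $e_u=\Piw u-u_h$. The projection error $\|u-\Piw u\|_0$ is bounded elementwise over $K\in\Oh$ by the approximation property of $\Piw$ in Lemma \ref{projection approximation}, and summing over the fine mesh yields a term of the form $Ch^s(\|u\|_s+\tau^{-1}\|\boldsymbol{q}\|_s)$, which is exactly the first group of the asserted estimate (for the classical mixed spaces of Table \ref{table:simplex} one even gets the smaller $Ch^s\|u\|_s$; the powers of $\tau$ are harmless in the multiscale regime, where $\tau=O(1)$, e.g.\ $\tau=1$).

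For $\|e_u\|_0$ I would quote Theorem \ref{multiscale_estimate_u}, which (using the $H^2$-regularity \eqref{H2_regularity} of the Laplace-type adjoint problem \eqref{multiscale_adjoint}) bounds it by $C\|\boldsymbol{q}-\boldsymbol{q}_h\|_{\alpha,\Omega}+C(h/\tau L)^{1/2}\|e_u-\ehatu\|_{\tau,\mathcal{T}_h}+C(h/L)^{1/2}\|u-P^H_{\partial}u\|_{\partial\mathcal{T}_H}$, and then handle the three terms in turn. For the first I would use $\|\boldsymbol{q}-\boldsymbol{q}_h\|_{\alpha,\Omega}\le\|\boldsymbol{q}-\Piv\boldsymbol{q}\|_{\alpha,\Omega}+\|\boldsymbol{e}_q\|_{\alpha,\Omega}$; since $\alpha$ is uniformly bounded above, $\|\boldsymbol{e}_q\|_{\alpha,\Omega}\le C\|\boldsymbol{e}_q\|_0$, while $\|\boldsymbol{q}-\Piv\boldsymbol{q}\|_{\alpha,\Omega}\le C_{\alpha}h^s\|\boldsymbol{q}\|_s$ again by Lemma \ref{projection approximation}, a term already subsumed in the first group. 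The middle term I would leave untouched as $C(h/\tau L)^{1/2}\|e_u-\ehatu\|_{\tau,\mathcal{T}_h}$. The last term is controlled by Lemma \ref{L2Interpolant}, which gives $\|u-P^H_{\partial}u\|_{\partial\mathcal{T}_H}\le C(\epsilon L^{-1/2}\|u_0\|_2+\sqrt\epsilon L^{-1/2}\|\nabla u_0\|_\infty+H^{t-1/2}\|u_0\|_t)$, the $L^{-1/2}$ prefactors being harmless constants; multiplying by $(h/L)^{1/2}$ reproduces the second group of the claimed bound.

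It then remains to collect the pieces: $\|\boldsymbol{e}_q\|_0$ enters with a bounded constant and $\|e_u-\ehatu\|_{\tau,\mathcal{T}_h}$ with the factor $(h/\tau L)^{1/2}$, so both are dominated by $C\big(1+(h/\tau L)^{1/2}\big)\big(\|\boldsymbol{e}_q\|_0+\|e_u-\ehatu\|_{\tau,\mathcal{T}_h}\big)$, the last group of the statement. I would also remark that substituting Theorem \ref{multiscale_estimate_q} (equivalently Corollary \ref{L2estimate_q}) for $\|\boldsymbol{e}_q\|_0+\|e_u-\ehatu\|_{\tau,\mathcal{T}_h}$ turns this into a fully \emph{a priori}, data-only bound, though the form recorded here deliberately keeps those intrinsic error quantities. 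There is no genuine difficulty here, the argument being essentially bookkeeping; the only point requiring attention is keeping the $\alpha$-weighted and unweighted $L^2$-norms straight via the uniform two-sided bounds on $\alpha$, and grouping the powers of $h$, $H$, $\epsilon$ and $L$ consistently with $h<\epsilon\ll H<L\le1$ so that no spurious negative power of the fine scale $h$ is introduced.
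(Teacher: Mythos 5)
Your proposal is correct and follows exactly the route the paper intends: the corollary is stated as an immediate consequence of Theorem \ref{multiscale_estimate_u} combined with the triangle inequality, Lemma \ref{projection approximation} for $\|u-\Piw u\|_0$ and $\|\boldsymbol{q}-\Piv\boldsymbol{q}\|_{\alpha,\Omega}$, and Lemma \ref{L2Interpolant} for $\|u-P^H_{\partial}u\|_{\partial\mathcal{T}_H}$, which is precisely your bookkeeping. The only caveat is your remark that the $L^{-1/2}$ prefactors from Lemma \ref{L2Interpolant} are ``harmless constants'': since $L\le 1$ they are not bounded by an absolute constant, but the corollary as stated in the paper drops them in the same way, so your argument reproduces the stated bound faithfully.
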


\section{Conclusions}

In this paper, we introduce a hybrid discontinuous Galerkin method for solving
multiscale elliptic equations. This is a first paper in a serious
of two papers. In the present paper,
 we consider polynomial and homogenization-based
coarse-grid spaces and lay a foundation of
hybrid discontinuous Galerkin methods for solving multiscale flow
equations. 
Our method  gives a framework that (1) couples
independently generated multiscale basis functions in each coarse patch
(2) provides a stable global coupling independent of local
discretization, physical scales and contrast
(3) allows avoiding any constraints
on coarse spaces.
Though the coarse spaces in the paper are designed for 
 problems with scale separation, the above  properties of
our framework
are important for extending 
the method to more challenging
multiscale problems with non-separable scales and high contrast.
This is a subject of the subsequent paper.

\section{Acknowledgements}

Y. Efendiev's work is
partially supported by the
DOE and NSF (DMS 0934837 and DMS 0811180).
R. Lazarov's research was supported in parts by NSF (DMS-1016525).

This publication is based in part on work supported by Award 
No. KUS-C1-016-04, made by King Abdullah University of Science 
and Technology (KAUST).

\bibliographystyle{plain} 
\bibliography{bib_raytcho,references,doe_2012}

\end{document}